\theoremstyle{plain}
\newtheorem{theorem}{Theorem}[section]
\newtheorem{lemma}[theorem]{Lemma}
\newtheorem{corollary}[theorem]{Corollary}
\newtheorem{proposition}[theorem]{Proposition}
\theoremstyle{definition}
\newtheorem{definition}[theorem]{Definition}
\newtheorem{remark}[theorem]{Remark}
\numberwithin{equation}{section}
\DeclareMathOperator{\supp}{supp}
\DeclareMathOperator*{\essinf}{ess\,inf}
\DeclareMathOperator*{\wsup}{w-sup}
\DeclareMathOperator*{\winf}{w-inf}
\title{The Wiener criterion for nonlocal Dirichlet problems}
\author{Minhyun Kim}
\address{Fakult\"at f\"ur Mathematik, Universit\"at Bielefeld, Bielefeld 33615, Germany}
\email{minhyun.kim@uni-bielefeld.de}
\author{Ki-Ahm Lee}
\address{Department of Mathematical Sciences \& Research Institute of Mathematics, Seoul National University, 08826 Seoul, Republic of Korea}
\email{kiahm@snu.ac.kr}
\author{Se-Chan Lee}
\address{Department of Mathematical Sciences, Seoul National University, Seoul 08826, Republic of Korea}
\email{dltpcks1@snu.ac.kr}
\subjclass[2020]{31B25, 31B15, 35R11}
\keywords{Wiener criterion, harmonic function, Wolff potential, nonlocal equation}
\thanks{Minhyun Kim gratefully acknowledge financial support by the German Research Foundation (GRK 2235 - 282638148). The research of Ki-Ahm Lee is supported by the National Research Foundation of Korea (NRF) grant funded by the Korea government (MSIP): NRF-2021R1A4A1027378.}
\begin{document}

\begin{abstract}
We study the boundary behavior of solutions to the Dirichlet problems for integro-differential operators with order of differentiability $s \in (0, 1)$ and summability $p>1$. We establish a nonlocal counterpart of the Wiener criterion, which characterizes a regular boundary point in terms of the nonlocal nonlinear potential theory.
\end{abstract}

\maketitle


\section{Introduction} \label{sec:introduction}


Wiener, in his pioneering works \cite{Wie24a,Wie24b}, provided a necessary and sufficient condition, so-called the Wiener criterion, for a boundary point to be regular in the case of the Laplacian. In the fundamental work by Littman, Stampacchia, and Weinberger \cite{LSW63}, the validity of the Wiener criterion was verified for second-order uniformly elliptic linear operators with bounded and measurable coefficients. After these works, the Wiener criterion has been extended to a large class of quasilinear operators of second order by using the nonlinear potential theory. See Maz'ya \cite{Maz70}, Gariepy and Ziemer \cite{GZ77}, Lindqvist and Martio \cite{LM85}, and Kilpel\"ainen and Mal\'y \cite{KM94}. We refer the reader to the books \cite{HKM06,MZ97} and references therein. Further results can be found in \cite{Lab02} for $k$-Hessian operators, \cite{AK04} for $p(x)$-Laplacian, and \cite{LL21} for operators with Orlicz growth.

For fractional-order operators, several sufficient conditions for a boundary point to be regular have been proposed. Under the exterior sphere condition, solutions to the Dirichlet problems are continuous across the boundary of a domain for the fractional Laplacian \cite{ROS14} and the fractional $p$-Laplacian \cite{IMS16,LL17}. See also \cite{ROS17}. Moreover, it is known that the measure density condition of the complement of a domain is sufficient for more general linear and nonlinear nonlocal operators with measurable coefficients \cite{KKP16,KKP17,LZLH20}. We point out that these conditions serve as sufficient conditions even for a wide class of local operators. However, they are far from sharp in the sense that they cannot serve as necessary conditions even though it is easier to check whether a domain satisfies these conditions than the Wiener criterion.

There is a relatively small amount of literature on the Wiener criterion for nonlocal operators. Eilertsen \cite{Eil00} proved the sufficiency of the Wiener criterion for solutions to the nonhomogeneous fractional Laplace equation with zero boundary data. Recently, Bj\"orn \cite{Bjo21} showed both sufficiency and necessity of the Wiener criterion for the fractional Laplacian by using Caffarelli--Silvestre extension \cite{CS07}. Note that the Wiener criterion in \cite{Bjo21} is obtained only for boundary data with compact support. Moreover, the methods in \cite{Bjo21,Eil00} are restricted to the case of the fractional Laplacian.

The aim of this paper is to establish a nonlocal counterpart of the Wiener criterion in full generality for a class of nonlinear nonlocal elliptic operators with measurable coefficients. Our method is based on the nonlocal nonlinear potential theory, which has been actively studied in recent years.

Let $n \in \mathbb{N}$, $s \in (0,1)$, $p>1$, and $\Lambda \geq 1$. We consider a nonlocal operator $\mathcal{L}$ defined by
\begin{equation*}
\mathcal{L}u(x) = 2 \, \mathrm{p.v.} \int_{\mathbb{R}^n} |u(x)-u(y)|^{p-2}(u(x)-u(y)) k(x,y) \,\mathrm{d}y,
\end{equation*}
where $k: \mathbb{R}^n \times \mathbb{R}^n \to [0, \infty]$ is a measurable function satisfying $k(x,y) = k(y,x)$ and
\begin{equation} \label{eq:ellipticity}
\frac{\Lambda^{-1}}{|x-y|^{n+sp}} \leq k(x,y) \leq \frac{\Lambda}{|x-y|^{n+sp}}.
\end{equation}
This operator is modeled on the fractional $p$-Laplacian, $(-\Delta)_p^s$, in which case the kernel is given by $k(x,y) = |x-y|^{-n-sp}$.

Let $\Omega$ be an open and bounded subset in $\mathbb{R}^n$. We say that a boundary point $x_0$ of $\Omega$ is {\it regular with respect to $\mathcal{L}$} if, for each function $g \in V^{s, p}(\Omega|\mathbb{R}^n) \cap C(\mathbb{R}^n)$, the unique $\mathcal{L}$-harmonic function $u \in V^{s, p}(\Omega|\mathbb{R}^n) \cap C(\Omega)$ with $u-g \in W^{s, p}_0(\Omega)$ satisfies
\begin{equation*}
\lim_{\Omega \ni x \to x_0}u(x)=g(x_0).
\end{equation*}
See \Cref{sec:weak-soln} for the definitions of function spaces and $\mathcal{L}$-harmonic functions, and \Cref{sec:harmonic} for the solvability of the Dirichlet problem in this function space. Note that a boundary data $g$ is not necessarily bounded nor compactly supported.

The following is the main theorem of this paper, which answers the question raised by \cite[Open Problem 7]{Pal18}.

\begin{theorem} \label{thm:main}
A boundary point $x_0 \in \partial \Omega$ is regular with respect to $\mathcal{L}$ if and only if
\begin{equation} \label{eq:wiener-int}
\int_0 \left( \frac{\mathrm{cap}_{s, p}(\overline{B_\rho(x_0)} \setminus \Omega, B_{2\rho}(x_0))}{\rho^{n-sp}} \right)^{\frac{1}{p-1}} \frac{\mathrm{d}\rho}{\rho} = + \infty.
\end{equation}
\end{theorem}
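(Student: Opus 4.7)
The plan follows the Kilpel\"ainen--Mal\'y strategy from local nonlinear potential theory, adapted to the nonlocal setting of $\mathcal{L}$. The technical core is a pointwise Wolff-type potential estimate for $\mathcal{L}$-superharmonic functions: if $u \ge 0$ is $\mathcal{L}$-superharmonic in $B_{2R}(x_0)$ with associated nonnegative measure $\mu$, I aim to establish two-sided bounds of the form
\begin{equation*}
c\, W^{\mu}_{s,p}(x_0, R) - C\, T \ \le\ u(x_0) \ \le\ C\, W^{\mu}_{s,p}(x_0, R) + C \essinf_{B_R(x_0)} u + C\, T,
\end{equation*}
where $W^{\mu}_{s,p}(x_0, R) = \int_0^R (\mu(B_\rho(x_0))/\rho^{n-sp})^{1/(p-1)}\, d\rho/\rho$ and $T$ collects nonlocal tail contributions from $\mathbb{R}^n \setminus B_R(x_0)$. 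These estimates should be derived by De Giorgi-type dyadic iteration combined with Caccioppoli inequalities specifically designed to separate local energy from tail terms.

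For the sufficiency direction, suppose \eqref{eq:wiener-int} diverges and let $u$ solve the Dirichlet problem with continuous datum $g$. Applying the upper Wolff estimate to the $\mathcal{L}$-subsolutions $(u - g(x_0) - \varepsilon)_+$ and $(g(x_0) - \varepsilon - u)_+$, each vanishing on $\overline{B_\rho(x_0)} \setminus \Omega$, a dyadic iteration produces oscillation decay governed by the partial sums
\begin{equation*}
S_k \ =\ \sum_{j=1}^{k}\left(\frac{\mathrm{cap}_{s,p}(\overline{B_{2^{-j}R}(x_0)}\setminus\Omega,\, B_{2^{-j+1}R}(x_0))}{(2^{-j}R)^{n-sp}}\right)^{\!1/(p-1)},
\end{equation*}
roughly of the form $\mathrm{osc}_{B_{2^{-k}R}\cap\Omega}\,u \leq C\exp(-c S_k) + \omega_g(2^{-k}R)$, where $\omega_g$ is the modulus of continuity of $g$ at $x_0$. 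Divergence of \eqref{eq:wiener-int} forces $S_k \to \infty$, and continuity of $g$ yields $u(x) \to g(x_0)$. For necessity, I argue by contrapositive: assuming the integral converges, I use the lower Wolff estimate applied to the capacitary potential of $\overline{B_\rho(x_0)} \setminus \Omega$ in $B_{2\rho}(x_0)$, together with a Perron-type gluing across dyadic scales, to construct a bounded continuous boundary datum $g$ with $g(x_0) = 1$ whose $\mathcal{L}$-harmonic extension $u$ satisfies $\liminf_{x \to x_0} u(x) < 1$, contradicting the regularity of $x_0$.

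The principal obstacle is isolating the capacity $\mathrm{cap}_{s,p}(\overline{B_\rho(x_0)}\setminus\Omega, B_{2\rho}(x_0))$ cleanly from the dyadic iteration. In the local theory, annular cutoffs produce Caccioppoli inequalities in which this capacity arises naturally from a Sobolev--Poincar\'e constant; nonlocality, however, couples all scales through tail integrals, and a careless estimate would replace $\mathrm{cap}_{s,p}$ by a larger ``global'' quantity that fails to match \eqref{eq:wiener-int}. Designing test functions so that tail terms remain absorbable at each dyadic level, and then matching this absorption with the sharp \emph{lower} Wolff estimate needed to build the capacitary barriers for the necessity direction, will be the delicate technical heart of the argument.
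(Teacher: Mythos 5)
Your overall architecture matches the paper's: sufficiency via boundary Caccioppoli/weak Harnack estimates that convert decay of $\sup_{B_\rho\cap\Omega}u$ into an upper bound for the capacity density, and necessity via a pointwise Wolff-potential estimate applied to the capacitary potential of $\overline{B_\rho(x_0)}\setminus\Omega$. Two remarks on the sufficiency half. First, no Wolff estimate is actually needed there: the solution is $\mathcal{L}$-harmonic, so $\mu=0$, and what the paper uses is the weak Harnack inequality up to the boundary for the \emph{supersolution} $u_{l,4\rho}=M_l(4\rho)-(u-l)_+$ (\Cref{thm:bdry-WHI}), giving \Cref{lem:key}, i.e.\ $\mathrm{cap}_{s,p}(D_\rho,B_{2\rho})/\rho^{n-sp}\le C\bigl((M_l(4\rho)-M_l(\rho)+\mathrm{Tail}((u_{l,4\rho})_-;x_0,4\rho))/M_l(4\rho)\bigr)^{p-1}$; integrating in $\rho$ and showing the tail contribution is finite (by decomposing $\mathbb{R}^n\setminus B_{4\rho}$ into dyadic annuli, bounding $(u_{l,4\rho})_-$ by $M_l(4^{j+1}\rho)-M_l(4\rho)$ there, and applying Fubini) contradicts divergence of \eqref{eq:wiener-int}. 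Your exponential oscillation decay $\exp(-cS_k)$ is an equivalent rearrangement of this, and you correctly identify tail absorption across scales as the crux; the paper's telescoping-sum treatment of the tail is exactly the missing ingredient you would need to supply.

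The genuine gap is in the necessity direction: you invoke the \emph{lower} Wolff bound, but a lower bound on the capacitary potential cannot show that it fails to attain the value $1$ at $x_0$. What is needed is the \emph{upper} estimate $u_\rho(x_0)\le C\bigl(\inf_{B_{2\rho}}u_\rho+{\bf W}^{\mu_\rho}_{s,p}(x_0,4\rho)+\mathrm{Tail}(u_\rho;x_0,2\rho)\bigr)$ (\Cref{thm:Wolff}), combined with $\mu_\rho(E)\le\Lambda\,\mathrm{cap}_{s,p}(D_\rho\cap E,B_{8\rho})$ (\Cref{mecap}) so that convergence of \eqref{eq:wiener-int} makes ${\bf W}^{\mu_\rho}_{s,p}$ small, plus separate arguments making $\inf_{B_{2\rho}}u_\rho$ and the tail of $u_\rho$ small (the paper controls both through the capacity density and the energy minimality of the potential, \Cref{lem:potential}). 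Once $u_\rho(x_0)<1$ is known, the passage to an actual irregular Dirichlet datum in $\Omega$ is your ``gluing'' step, carried out in \Cref{irr} by a comparison $v_r\le\tfrac12+u_\rho$ in $\Omega\cap B_{8\rho}$; note that in the nonlocal setting this comparison must be verified on \emph{all} of $\mathbb{R}^n\setminus(\Omega\cap B_{8\rho})$, including $\Omega\setminus B_{8\rho}$, which requires the boundary local boundedness estimate and capacity decay as $r\to0$. The sharp lower Wolff bound is never used in the paper, and attempting to prove it would be wasted (and nontrivial) effort; dropping it and redirecting the necessity argument through the upper bound repairs the proposal. Finally, be aware that the existing SOLA-based Wolff estimates require $p>2-s/n$, so for the full range $p>1$ you must prove the upper Wolff estimate afresh for $\mathcal{L}$-superharmonic functions, as the paper does.
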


We refer to \Cref{sec:potential} for the definition of $(s,p)$-capacity. The integral in \eqref{eq:wiener-int} is called the {\it Wiener integral}. We remark that \Cref{thm:main} trivially holds when $p > n/s$. Indeed, if $p > n/s$, then it is easy to see that every boundary point is regular with respect to $\mathcal{L}$ by the Sobolev embedding theorem and that \eqref{eq:wiener-int} holds by \Cref{lem:cap}.

One can easily check that if a domain satisfies the exterior cone condition, the exterior $(\delta, R)$-Reifenberg flat condition, or the exterior corkscrew condition at $x_0 \in \partial \Omega$, then the Wiener integral at $x_0$ diverges. For the definitions of Reifenberg flat domain and corkscrew domain, we refer to \cite{JK82,Rei60}. As a direct consequence of \Cref{thm:main}, these geometric conditions are sufficient for $x_0$ to be regular.

\Cref{thm:main} implies that the regularity of a boundary point $x_0 \in \partial \Omega$ is completely determined by the local geometry of $\partial \Omega$. This result seems somewhat surprising because we deal with nonlocal equations. Indeed, in the intermediate steps of the proof of \Cref{thm:main}, we make substantial use of various integral estimates such as local boundedness, weak Harnack inequality, and Wolff potential estimate that contain a nonlocal tail term; see \Cref{sec:bdry-est} for details. Nevertheless, at the final stage, we observe that the long-range interactions of the solution disappear in the Wiener criterion. One possible interpretation of this phenomenon is that the information concerning whether the Wiener integral diverges or not is strong enough to absorb the nonlocal effect coming from the corresponding tail terms.

The following is an immediate corollary of \Cref{thm:main}, which is expected as in the case of local operators.
\begin{corollary}
The regularity of a boundary point depends only on $n$, $s$, and $p$, not on the operator $\mathcal{L}$ itself.
\end{corollary}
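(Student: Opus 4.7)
The plan is to derive the corollary as a direct consequence of \Cref{thm:main}. First I would observe that the characterization in \Cref{thm:main} reduces the question of regularity at $x_0 \in \partial \Omega$ to the divergence of a single scalar integral, the Wiener integral
\begin{equation*}
\int_0 \left( \frac{\mathrm{cap}_{s, p}(\overline{B_\rho(x_0)} \setminus \Omega, B_{2\rho}(x_0))}{\rho^{n-sp}} \right)^{\frac{1}{p-1}} \frac{\mathrm{d}\rho}{\rho}.
\end{equation*}
The key point is that the integrand depends on the data in a very restricted way: the exponents $n$, $s$, $p$ appear explicitly, the local geometry of $\Omega$ near $x_0$ enters through the set $\overline{B_\rho(x_0)} \setminus \Omega$, and the only remaining object is the $(s,p)$-capacity.

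Next I would recall that the $(s,p)$-capacity (as defined in \Cref{sec:potential}) is the variational capacity associated to the $W^{s,p}$ seminorm, so it depends only on $n$, $s$, and $p$ and is not tied to the specific kernel $k$ appearing in the definition of $\mathcal{L}$. Consequently, for any two admissible operators $\mathcal{L}_1$ and $\mathcal{L}_2$ sharing the same parameters $n$, $s$, $p$ (with possibly different ellipticity constants $\Lambda_1$, $\Lambda_2$ and different admissible kernels $k_1$, $k_2$ satisfying \eqref{eq:ellipticity}), the Wiener integral at a given $x_0 \in \partial \Omega$ is literally the same object.

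Therefore I would conclude as follows: by \Cref{thm:main} applied to $\mathcal{L}_1$, regularity of $x_0$ with respect to $\mathcal{L}_1$ is equivalent to divergence of the Wiener integral at $x_0$; by \Cref{thm:main} applied to $\mathcal{L}_2$, the same divergence is equivalent to regularity with respect to $\mathcal{L}_2$. Chaining these equivalences yields the corollary. Since the argument is just an application of the main theorem twice, there is no real obstacle here; the only issue worth double-checking is that the class of admissible boundary data $V^{s,p}(\Omega|\mathbb{R}^n) \cap C(\mathbb{R}^n)$ and the notion of $\mathcal{L}$-harmonic function used in the definition of regularity are formulated in a way that does not secretly depend on the kernel beyond the parameters $n$, $s$, $p$, which is indeed the case as recorded in \Cref{sec:weak-soln} and \Cref{sec:harmonic}.
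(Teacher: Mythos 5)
Your argument is correct and is exactly what the paper intends: the corollary is stated as an immediate consequence of \Cref{thm:main}, since the Wiener integral involves only the $(s,p)$-capacity defined via the $W^{s,p}$ seminorm and hence depends only on $n$, $s$, $p$ and the geometry of $\Omega$ near $x_0$, not on the kernel $k$. Applying \Cref{thm:main} to two admissible operators and chaining the equivalences, as you do, is the intended (and essentially the only) proof.
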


Let us outline the strategy of the proof of \Cref{thm:main}. The starting point of the sufficient part is to capture the local behaviors of weak solutions near the boundary. More precisely, we establish the local boundedness for weak subsolutions and the weak Harnack inequality for weak supersolutions up to the boundary. These are boundary variants of the interior estimates provided in \cite{BP16,DCKP14,DCKP16,Kas09}. To this end, we modify localization techniques developed in the aforementioned papers to obtain Caccioppoli-type estimates near the boundary and then use Moser's iteration.

The essential challenge in the nonlocal framework arises from the nonlocal tail term, which is a quantity encoding the long-range interactions; see \eqref{eq:tail} for the precise definition. Indeed, after repeated applications of the Caccioppoli-type inequalities and the weak Harnack inequality, we control the capacity of the complement of the domain by the sum of the infimum value of supersolutions and the nonlocal tail term. The nonlocal effect turns out to be negligible from the sufficient condition that the Wiener integral diverges. In this step, we divide the domain into a countable union of annuli and then analyze the quantified tail term in an iterative way.

Let us move on to the necessary part of the proof. In this case, the nonlocal nonlinear potential theory is more involved since the main ingredient is the pointwise estimate of solutions in terms of the Wolff potential. This estimate, together with additional regularity results, was already obtained by Kuusi, Mingione, and Sire \cite{KMS15} for SOLA (Solutions Obtained as Limits of Approximations). Nevertheless, we could not directly adopt the estimate since they imposed the natural restriction $p>2-\frac{s}{n}$ to guarantee the existence of SOLA for a given Radon measure. This problem can be overcome by using a different type of solution, namely, $\mathcal{L}$-superharmonic function, as in the local case (see \cite{KM92,KM94}). In this way, we cover all ranges of $p >1$. However, we still need to prove the Wolff potential estimate for $\mathcal{L}$-superharmonic functions for $p >1$. Therefore, we present a new proof of the Wolff potential estimate, mainly following the iteration scheme given in \cite{KM94}. Again, one has to take into account a suitable tail, which requires further effort to handle. The necessary part is then concluded by using the relationship between the Wolff potential generated by the $\mathcal{L}$-potential and the $\mathcal{L}$-distribution; see \Cref{sec:necessary} for details.

This paper is organized as follows. In \Cref{sec:preliminaries}, we introduce several definitions and collect preliminary results related to our main theorem. \Cref{sec:bdry-est} consists of local estimates up to the boundary, namely, the local boundedness and the weak Harnack inequality. The proofs for the sufficiency and the necessity of the Wiener criterion are provided in 
\Cref{sec:sufficient} and \Cref{sec:necessary}, respectively. Some algebraic inequalities used in the previous sections can be found in \Cref{sec:inequalities}.


\section{Preliminaries} \label{sec:preliminaries}


In this section, we recall several function spaces to define weak solutions and harmonic functions with respect to $\mathcal{L}$. By using harmonic functions, we define a regular boundary point with respect to $\mathcal{L}$. Moreover, we introduce the $\mathcal{L}$-potential, which is closely related to the $(s, p)$-capacity.


\subsection{Weak solution} \label{sec:weak-soln}


Let us first collect definitions of fractional Sobolev spaces and tail spaces. For $s \in (0,1)$, $p \in [1, \infty)$, and an open set $\Omega \subset \mathbb{R}^n$, let $W^{s, p}(\Omega)$ be the usual {\it fractional Sobolev space} with the norm
\begin{equation*}
\|u\|_{W^{s, p}(\Omega)} = \left( \|u\|_{L^p(\Omega)}^p + [u]_{W^{s, p}(\Omega)}^p \right)^{1/p} = \left( \int_{\Omega} |u(x)|^p \,\mathrm{d}x + \int_{\Omega} \int_{\Omega} \frac{|u(x)-u(y)|^p}{|x-y|^{n+sp}} \,\mathrm{d}y \,\mathrm{d}x \right)^{1/p},
\end{equation*}
see \cite{DNPV12}. By $W^{s, p}_{\mathrm{loc}}(\Omega)$ we denote the space of functions that belong to $W^{s, p}(\Omega')$ for each open set $\Omega' \subset\subset \Omega$. We recall the {\it tail space} $L^{p-1}_{sp}(\mathbb{R}^n)$, which is given by
\begin{equation*}
L^{p-1}_{sp}(\mathbb{R}^n) = \left\lbrace u \in L^{p-1}_{\mathrm{loc}}(\mathbb{R}^n): \int_{\mathbb{R}^n} \frac{|u(y)|^{p-1}}{(1+|y|)^{n+sp}} \,\mathrm{d}y < +\infty \right\rbrace.
\end{equation*}
It is well known and easy to check that $W^{s, p}(\mathbb{R}^n) \subset L^{p-1}_{sp}(\mathbb{R}^n)$ and that the {\it nonlocal tail} (or {\it tail} for short) $\mathrm{Tail}(u; x_0, r)$, for any $x_0 \in \mathbb{R}^n$ and $r > 0$, is well defined if $u \in L^{p-1}_{sp}(\mathbb{R}^n)$, where
\begin{equation} \label{eq:tail}
\mathrm{Tail}(u; x_0, r) = \left( r^{sp} \int_{\mathbb{R}^n \setminus B_r(x_0)} \frac{|u(y)|^{p-1}}{|y-x_0|^{n+sp}} \,\mathrm{d}y \right)^{1/(p-1)}.
\end{equation}
The nonlocal tail and the tail space have been introduced and studied in a systematic way in \cite{DCKP14,DCKP16,Kas11} and \cite{Coz17,IMS16,KKL19,KKP16,KKP17,KMS15}.

Let $\Lambda \geq 1$ and $k$ be a measurable function that is symmetric and satisfies \eqref{eq:ellipticity}. For measurable functions $u, v: \mathbb{R}^n \to \mathbb{R}$, we define a quantity
\begin{equation*}
\mathcal{E}^k(u,v)=\int_{\mathbb{R}^n} \int_{\mathbb{R}^n} |u(x)-u(y)|^{p-2} (u(x)-u(y))(v(x)-v(y)) k(x, y) \,\mathrm{d}y\,\mathrm{d}x,
\end{equation*}
provided that it is finite. In particular, we denote by $\mathcal{E}^{s, p}(u, v)=\mathcal{E}^k(u, v)$ when $k$ is given by $k(x, y) = |x-y|^{-n-sp}$. We also write $\mathcal{E}^{k}(u)=\mathcal{E}^{k}(u, u)$ and $\mathcal{E}^{s, p}(u) = \mathcal{E}^{s, p}(u, u) = [u]_{W^{s, p}(\mathbb{R}^n)}^p$. Note that, if $u \in W^{s, p}_{\mathrm{loc}}(\Omega) \cap L^{p-1}_{sp}(\mathbb{R}^n)$ and $v \in C_c^{\infty}(\Omega)$, then both $\mathcal{E}^k(u, v)$ and $\mathcal{E}^{s, p}(u, v)$ are finite. We now define a weak subsolution, supersolution, and solution with respect to $\mathcal{L}$. See also \cite{KKL19,KKP16,KKP17,LL17,Pal18}.

\begin{definition} \label{def:weak-soln}
Let $f$ be a measurable function in $\Omega$. A function $u \in W^{s, p}_{\mathrm{loc}}(\Omega)$ with $u_- \in L^{p-1}_{sp}(\mathbb{R}^n)$ is called a {\it weak supersolution of $\mathcal{L}u=f$ in $\Omega$} if $\langle f, \varphi \rangle:= \int_{\Omega}f(x) \varphi(x) \,\mathrm{d}x$ is well defined and
\begin{equation} \label{eq:weak-supersoln}
\mathcal{E}^k(u, \varphi) \geq \langle f, \varphi \rangle
\end{equation}
for all nonnegative functions $\varphi \in C_c^{\infty}(\Omega)$. A function $u$ is called a {\it weak subsolution} if $-u$ is a weak supersolution. We call $u$ a {\it weak solution} if it is both a weak subsolution and a weak supersolution.
\end{definition}

Note that the summability assumption $u_- \in L^{p-1}_{sp}(\mathbb{R}^n)$ for weak supersolution in \Cref{def:weak-soln} can be replaced by $u \in L^{p-1}_{sp}(\mathbb{R}^n)$. It is proved \cite[Lemma 1]{KKP17} that these two definitions are actually equivalent. 

Since test functions are not always in $C_c^{\infty}(\Omega)$, it is desirable to deal with a larger class of test functions than $C_c^{\infty}(\Omega)$. For this purpose, we consider the Banach spaces
\begin{equation} \label{eq:space-V}
V^{s, p}(\Omega|\mathbb{R}^n) := \left\lbrace u: \mathbb{R}^n \to \mathbb{R} : u|_{\Omega} \in L^p(\Omega), \frac{|u(x)-u(y)|}{|x-y|^{n/p+s}} \in L^p(\Omega \times \mathbb{R}^n) \right\rbrace,
\end{equation}
equipped with the norm
\begin{equation*}
\begin{split}
\|u\|_{V^{s, p}(\Omega|\mathbb{R}^n)} 
&= \left( \|u\|_{L^p(\Omega)}^p + [u]_{V^{s, p}(\Omega|\mathbb{R}^n)}^p \right)^{1/p} \\
&= \left( \int_{\Omega} |u(x)|^p \,\mathrm{d}x + \int_{\Omega} \int_{\mathbb{R}^n} \frac{|u(x)-u(y)|^p}{|x-y|^{n+sp}} \,\mathrm{d}y \,\mathrm{d}x \right)^{1/p},
\end{split}
\end{equation*}
and
\begin{equation} \label{eq:space-W}
W^{s, p}_0(\Omega) := \overline{C_c^{\infty}(\Omega)}^{V^{s, p}(\Omega|\mathbb{R}^n)} = \overline{C_c^{\infty}(\Omega)}^{W^{s, p}(\mathbb{R}^n)}.
\end{equation}
Note that $V^{s, p}(\Omega|\mathbb{R}^n) \subset W^{s, p}_{\mathrm{loc}}(\Omega) \cap L^{p-1}_{sp}(\mathbb{R}^n)$. We refer the reader to \cite{FG20} for more properties of these spaces.

\begin{remark}
\begin{enumerate}[(i)]
\item
The notation $V$ in the definition \eqref{eq:space-V} is adopted from \cite{DK20,FKV15,FGKV20}. Note that some authors denote the space in \eqref{eq:space-V} by $W^{s, p}(\Omega|\mathbb{R}^n)$ or $\mathbb{W}^{s,p}(\Omega)$. See, e.g., \cite{Coz17,FG20}.
\item
In this paper, we use $W^{s, p}_0(\Omega)$ to denote the closure of $C_c^{\infty}(\Omega)$ in $V^{s, p}(\Omega|\mathbb{R}^n)$ as in \cite{DK13,FKV15,LL17}. However, some authors often define $W^{s, p}_0(\Omega)$ to be the closure of $C_c^{\infty}(\Omega)$ in $W^{s, p}(\Omega)$, see \cite{DNPV12,HT08}. Moreover, the space
\begin{equation*}
W^{s, p}_{\Omega}(\mathbb{R}^n)=\lbrace u \in W^{s, p}(\mathbb{R}^n): u=0 \text{ a.e. on } \mathbb{R}^n \setminus \Omega \rbrace,
\end{equation*}
which contains \eqref{eq:space-W} by definition, is also used in \cite{FKV15} as a space of test functions. One should be careful since these three spaces are not equal in general especially on non-smooth domains. Indeed, if $\Omega$ is an open set with continuous boundary, then $W^{s, p}_0(\Omega) = W^{s, p}_\Omega(\mathbb{R}^n)$, but there exists a domain $\Omega$ such that $W^{s, p}_0(\Omega) \subsetneq W^{s, p}_{\Omega}(\mathbb{R}^n)$, see \cite{FSV15}. Note that it is known \cite{Gri11,HT08} that if $\Omega$ is a bounded open set with a Lipschitz boundary and $s \neq 1/p$, then all three spaces coincide, i.e.,
\begin{equation*}
W^{s, p}_0(\Omega) = W^{s, p}_\Omega(\mathbb{R}^n) = \overline{C_c^{\infty}(\Omega)}^{W^{s, p}(\Omega)}.
\end{equation*}
However, we emphasize that we do not assume any regularity on $\Omega$ and that we will only make use of the space \eqref{eq:space-W} in this paper.
\end{enumerate}
\end{remark}

If we know a priori that a weak supersolution has an additional regularity, then the space of test functions can be extended by a standard approximation argument.

\begin{proposition}
Let $f \in (W^{s, p}_0(\Omega))^{\ast}$. If $u \in V^{s, p}(\Omega|\mathbb{R}^n)$ is a weak supersolution of $\mathcal{L}u=f$ in $\Omega$, then \eqref{eq:weak-supersoln} holds for all nonnegative functions $\varphi \in W^{s, p}_0(\Omega)$.
\end{proposition}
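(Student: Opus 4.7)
The plan is a density argument: approximate a nonnegative $\varphi \in W^{s,p}_0(\Omega)$ by nonnegative test functions $\varphi_j \in C_c^{\infty}(\Omega)$ converging to $\varphi$ in $V^{s,p}(\Omega|\mathbb{R}^n)$, and then pass to the limit in the inequality $\mathcal{E}^k(u,\varphi_j) \geq \langle f,\varphi_j\rangle$ that holds for each $j$ by hypothesis.

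First, by the definition of $W^{s,p}_0(\Omega)$ in \eqref{eq:space-W}, pick $\psi_j \in C_c^{\infty}(\Omega)$ with $\psi_j \to \varphi$ in $V^{s,p}(\Omega|\mathbb{R}^n)$. Since $t \mapsto t_+$ is $1$-Lipschitz, the positive-part truncation is continuous on $V^{s,p}(\Omega|\mathbb{R}^n)$, hence $(\psi_j)_+ \to \varphi_+ = \varphi$ in that space. Each $(\psi_j)_+$ is nonnegative, Lipschitz, and compactly supported in $\Omega$; mollifying with a kernel of sufficiently small radius produces smooth, nonnegative, compactly supported approximations of $(\psi_j)_+$ in $V^{s,p}(\Omega|\mathbb{R}^n)$, and a diagonal extraction yields the desired sequence $\varphi_j \in C_c^{\infty}(\Omega)$ with $\varphi_j \geq 0$ and $\varphi_j \to \varphi$ in $V^{s,p}(\Omega|\mathbb{R}^n)$.

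To pass to the limit, continuity of $f \in (W^{s,p}_0(\Omega))^{\ast}$ immediately gives $\langle f,\varphi_j\rangle \to \langle f,\varphi\rangle$. For the left-hand side, set $\eta_j = \varphi - \varphi_j$; this function vanishes outside $\Omega$, so the integrand of $\mathcal{E}^k(u,\eta_j)$ vanishes on $\Omega^c \times \Omega^c$. Splitting the domain of integration over $\mathbb{R}^n \times \mathbb{R}^n$ and using the symmetry in $(x,y)$ to reduce everything to $\Omega \times \mathbb{R}^n$, then applying the ellipticity bound $k(x,y) \leq \Lambda |x-y|^{-n-sp}$ together with H\"older's inequality with exponents $p/(p-1)$ and $p$, one obtains
\begin{equation*}
|\mathcal{E}^k(u,\eta_j)| \leq 2\Lambda\, [u]_{V^{s,p}(\Omega|\mathbb{R}^n)}^{p-1}\, [\eta_j]_{V^{s,p}(\Omega|\mathbb{R}^n)},
\end{equation*}
which tends to $0$ as $j \to \infty$, so $\mathcal{E}^k(u,\varphi_j) \to \mathcal{E}^k(u,\varphi)$. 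The only genuinely delicate step is the construction of a \emph{nonnegative} smooth approximating sequence, which is essential because the supersolution inequality is only assumed for nonnegative test functions; all remaining steps are routine $V^{s,p}$-continuity estimates.
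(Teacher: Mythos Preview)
Your proposal is correct and is exactly the ``standard approximation argument'' the paper has in mind; the paper does not actually write out a proof of this proposition, so you are supplying the details it omits. One minor remark: your assertion that $t \mapsto t_+$ is continuous on $V^{s,p}(\Omega|\mathbb{R}^n)$ because it is $1$-Lipschitz deserves one more line of justification (e.g.\ pass to an a.e.\ convergent subsequence and invoke the generalized dominated convergence theorem with dominating sequence $|\psi_j(x)-\psi_j(y)|^p$), but this is routine and does not affect the validity of the argument.
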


The next theorem shows that the Dirichlet problem for the equation $\mathcal{L}u=0$ with Sobolev boundary data is solvable.

\begin{theorem} \label{thm:DP}
Suppose that $\Omega$ is bounded and that $f \in (W^{s, p}_0(\Omega))^{\ast}$, $g \in V^{s, p}(\Omega|\mathbb{R}^n)$. Then, there exists a unique weak solution $u \in V^{s, p}(\Omega|\mathbb{R}^n)$ of $\mathcal{L}u=f$ in $\Omega$ with $u-g\in W^{s, p}_0(\Omega)$.
\end{theorem}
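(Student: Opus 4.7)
The plan is to deploy the direct method of the calculus of variations on the convex affine set $g + W^{s,p}_0(\Omega)$. Identifying this set with $W^{s,p}_0(\Omega)$ via $v \mapsto u = g+v$, I define the energy functional
\[
J(v) := \frac{1}{p}\iint_{Q_\Omega} |u(x)-u(y)|^p\,k(x,y)\,\mathrm{d}y\,\mathrm{d}x - \langle f,v\rangle, \qquad u = g+v,
\]
where $Q_\Omega := (\mathbb{R}^n\times\mathbb{R}^n)\setminus(\Omega^c\times\Omega^c)$. Excluding $\Omega^c\times\Omega^c$ is essential: every such $u$ satisfies $u = g$ a.e.\ on $\Omega^c$, so the contribution of that region to the full Dirichlet energy $\mathcal{E}^k(u)$ would depend only on $g$ and could be infinite, since $g\in V^{s,p}(\Omega|\mathbb{R}^n)$ need not be $W^{s,p}$-regular on $\Omega^c$. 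Using the upper ellipticity bound in \eqref{eq:ellipticity} together with $|u(x)-u(y)|^p \le 2^{p-1}(|g(x)-g(y)|^p+|v(x)-v(y)|^p)$, one checks that $J$ is finite and (strongly) continuous on $W^{s,p}_0(\Omega)$.

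First I would verify that $J$ is \emph{strictly convex}: since $t\mapsto |t|^p$ is strictly convex for $p>1$ and $k\ge 0$, the quadratic-type integrand is strictly convex in $u$, and the linear functional $v\mapsto\langle f,v\rangle$ preserves this. Next I would prove \emph{coercivity} by exploiting the elementary inequality $|b|^p \le 2^{p-1}(|a+b|^p+|a|^p)$ with $a = g(x)-g(y)$ and $b = v(x)-v(y)$, together with the lower bound in \eqref{eq:ellipticity}, to deduce
\[
\iint_{Q_\Omega} |u(x)-u(y)|^p\,k(x,y)\,\mathrm{d}y\,\mathrm{d}x \;\ge\; c\Lambda^{-1}\,[v]_{W^{s,p}(\mathbb{R}^n)}^p - C\,[g]_{V^{s,p}(\Omega|\mathbb{R}^n)}^p.
\]
Combining this with the fractional Poincar\'e inequality on the bounded set $\Omega$ (which controls $\|v\|_{L^p(\Omega)}$ by $[v]_{W^{s,p}(\mathbb{R}^n)}$ for $v\in W^{s,p}_0(\Omega)$) and the duality bound $|\langle f,v\rangle|\le \|f\|_*\|v\|_{W^{s,p}_0(\Omega)}$ absorbed by Young's inequality, I conclude $J(v)\to +\infty$ as $\|v\|_{W^{s,p}_0(\Omega)}\to\infty$.

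Then I would close the existence argument by the direct method: take a minimizing sequence $v_j$; coercivity bounds it in the reflexive Banach space $W^{s,p}_0(\Omega)$, so along a subsequence $v_j\rightharpoonup v$. Since $J$ is convex and strongly continuous, it is weakly lower semicontinuous, so $J(v)\le\liminf J(v_j)$ and $v$ is a minimizer; strict convexity upgrades this to \emph{the} minimizer. Setting $u := g+v \in V^{s,p}(\Omega|\mathbb{R}^n)$, the first-variation identity $\frac{d}{dt}\big|_{t=0}J(v+t\varphi)=0$ for $\varphi\in W^{s,p}_0(\Omega)$ yields
\[
\mathcal{E}^k(u,\varphi) = \langle f,\varphi\rangle \qquad\text{for all } \varphi\in W^{s,p}_0(\Omega),
\]
so $u$ is a weak solution in the sense of \Cref{def:weak-soln} (with test functions extended by the preceding proposition).

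Uniqueness among weak solutions, not only among minimizers, comes from the strict monotonicity of the operator $t\mapsto |t|^{p-2}t$: if $u_1,u_2$ are two weak solutions with $u_i-g\in W^{s,p}_0(\Omega)$, subtracting the two identities and testing with $\varphi = u_1-u_2\in W^{s,p}_0(\Omega)$ gives
\[
\iint_{\mathbb{R}^n\times\mathbb{R}^n}\bigl(|u_1(x)-u_1(y)|^{p-2}(u_1(x)-u_1(y)) - |u_2(x)-u_2(y)|^{p-2}(u_2(x)-u_2(y))\bigr)\bigl((u_1-u_2)(x)-(u_1-u_2)(y)\bigr)k\,\mathrm{d}x\,\mathrm{d}y = 0,
\]
and the pointwise monotonicity inequality (strict unless the two differences coincide) combined with the zero boundary trace forces $u_1 = u_2$. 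The main subtlety is, as noted, the correct setup of the functional on a domain whose boundary data are only $V^{s,p}$-regular; once the $\Omega^c\times\Omega^c$ part is properly subtracted the rest is a routine adaptation of the Browder--Minty/variational machinery familiar from the local $p$-Laplace setting.
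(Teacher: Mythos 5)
Your proposal is correct and is essentially the argument the paper relies on: the paper does not prove \Cref{thm:DP} itself but defers to \cite{FKV15,FG20,LL17}, where exactly this direct-method minimization over $g+W^{s,p}_0(\Omega)$ of the energy restricted to $Q_\Omega=(\mathbb{R}^n\times\mathbb{R}^n)\setminus(\Omega^c\times\Omega^c)$ is carried out, and your identification of the removal of $\Omega^c\times\Omega^c$ as the key point for $V^{s,p}$ boundary data is the right one. The only imprecision is the claim that the integrand is strictly convex in $u$ --- the $p$-energy is invariant under adding constants, so strict convexity on the affine set requires the fractional Poincar\'e inequality (\Cref{poincare}) to rule out nonzero constants in $W^{s,p}_0(\Omega)$ --- but your separate uniqueness argument via strict monotonicity of $t\mapsto|t|^{p-2}t$ (which is the comparison principle, \Cref{thm:comparison}, specialized to equality) closes this in exactly the same way.
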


In \Cref{thm:DP}, we understand the boundary data in the weak sense, namely, $u=g$ in $\mathbb{R}^n \setminus \Omega$ in the sense of $V^{s, p}(\Omega|\mathbb{R}^n)$ if and only if $u-g \in W^{s, p}_0(\Omega)$. We generalize this concept to inequalities in the following way. We say that {\it $u\leq 0$ in $\mathbb{R}^n \setminus \Omega$ in the sense of $V^{s, p}(\Omega|\mathbb{R}^n)$} if $u_+:=\max \lbrace u, 0 \rbrace \in W^{s, p}_0(\Omega)$. Other types of inequalities can be defined in a similar manner. For example, $u \geq 0$ ($u \leq v$, respectively) in the sense of $V^{s, p}(\Omega|\mathbb{R}^n)$ if $-u \leq 0$ ($u-v \leq 0$, respectively) in the same sense. 

The uniqueness part of \Cref{thm:DP} follows from the comparison principle.

\begin{theorem} [Comparison principle] \label{thm:comparison}
Suppose that $\Omega$ is bounded. Let $u, v \in V^{s, p}(\Omega|\mathbb{R}^n)$ be such that $u \geq v$ in $\mathbb{R}^n \setminus \Omega$ in the sense of $V^{s, p}(\Omega|\mathbb{R}^n)$ and
\begin{equation*}
\mathcal{E}^k(u, \varphi) \geq \mathcal{E}^k(v, \varphi)
\end{equation*}
for all nonnegative functions $\varphi \in W^{s, p}_0(\Omega)$, then $u \geq v$ a.e. in $\mathbb{R}^n$.
\end{theorem}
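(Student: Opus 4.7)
The plan is to plug in the test function $\varphi := (v-u)_+$ and then exploit the strict monotonicity of $\Phi_p(t) := |t|^{p-2}t$, valid for $p > 1$. The function $\varphi$ is admissible: by definition of the $V^{s,p}(\Omega|\mathbb{R}^n)$-sense of the boundary inequality, $(v-u)_+ \in W^{s,p}_0(\Omega)$, and nonnegativity is automatic. Moreover $\mathcal{E}^k(u,\varphi)$ and $\mathcal{E}^k(v,\varphi)$ are finite because $u,v \in V^{s,p}(\Omega|\mathbb{R}^n)$ and $\varphi \in W^{s,p}_0(\Omega) \subset V^{s,p}(\Omega|\mathbb{R}^n)$.

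Subtracting the two inequalities rearranges the hypothesis into
\[
\int_{\mathbb{R}^n}\int_{\mathbb{R}^n}\bigl[\Phi_p(v(x)-v(y)) - \Phi_p(u(x)-u(y))\bigr]\bigl(\varphi(x)-\varphi(y)\bigr)\,k(x,y)\,\mathrm{d}y\,\mathrm{d}x \leq 0.
\]
I would then show that the integrand is pointwise nonnegative via a sign analysis of $w := v - u$. Setting $a := v(x) - v(y)$ and $b := u(x) - u(y)$: on $\{w \geq 0\}\times\{w \geq 0\}$ one has $\varphi(x)-\varphi(y) = w(x)-w(y) = a-b$, so the bracket equals $(\Phi_p(a)-\Phi_p(b))(a-b) \geq 0$ by monotonicity of $\Phi_p$; on $\{w < 0\}\times\{w < 0\}$ the factor $\varphi(x)-\varphi(y)$ vanishes; on the cross set $\{w > 0\}\times\{w < 0\}$ one has both $\varphi(x)-\varphi(y) = w(x) > 0$ and $a-b = w(x)-w(y) > 0$, so the bracket is strictly positive (the symmetric cross case is analogous). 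Together with the displayed inequality, the integrand must vanish a.e.

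To conclude, the strict positivity on $\{w > 0\}\times\{w < 0\}$ forces this product set to be null, so by Fubini either $|\{w > 0\}| = 0$ (and we are done) or $|\{w < 0\}| = 0$. In the latter case $w \geq 0$ a.e., but the boundary assumption gives $w \leq 0$ a.e.\ on $\mathbb{R}^n\setminus\Omega$, hence $w = 0$ on a set of infinite measure; combined with the a.e.\ vanishing in the $\{w\geq 0\}^2$ case, which forces $w(x) = w(y)$ for a.e.\ pair $(x,y)$, this makes $w$ a.e.\ constant, and the constant must be $0$.

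The main obstacle I foresee is purely bookkeeping: verifying that the integrand is summable so that the pointwise-a.e.\ argument is legitimate, and handling the measure-zero overlaps between the four cases cleanly. The essential nonlinear input is the strict monotonicity of $\Phi_p$, which is standard for $p > 1$; all other steps are routine manipulations within the spaces $V^{s,p}(\Omega|\mathbb{R}^n)$ and $W^{s,p}_0(\Omega)$ already set up in the preceding subsection.
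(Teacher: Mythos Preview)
Your proof is correct and follows the standard route: the paper itself does not prove \Cref{thm:comparison} but only remarks that the arguments in \cite{FKV15,FG20,LL17} carry over, and those references use exactly the test function $\varphi=(v-u)_+$ together with the monotonicity of $t\mapsto |t|^{p-2}t$, as you do. The integrability check you flag is routine, since $\varphi$ vanishes on $(\mathbb{R}^n\setminus\Omega)^2$ and H\"older's inequality on $\Omega\times\mathbb{R}^n$ applies because $u,v,\varphi\in V^{s,p}(\Omega|\mathbb{R}^n)$.

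One small simplification for the endgame: the integrand is in fact strictly positive on all of $\{w>0\}\times\{w\leq 0\}$ (not only $\{w<0\}$ on the second factor), since $\varphi(x)-\varphi(y)=w(x)>0$ and $a-b=w(x)-w(y)\geq w(x)>0$ there. Hence vanishing of the integrand forces $|\{w>0\}|\cdot|\{w\leq 0\}|=0$; but $|\{w\leq 0\}|>0$ because $\Omega$ is bounded and $w\leq 0$ a.e.\ on $\mathbb{R}^n\setminus\Omega$, so $|\{w>0\}|=0$ directly. This avoids the ``$w$ is a.e.\ constant'' detour, though your version is also valid.
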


\Cref{thm:DP} and \Cref{thm:comparison} have been proved in slightly different settings in the literature \cite{FKV15,FG20,LL17}, but the same arguments work in our setting.

Let us finish the section with some functional inequalities. See \cite[Theorem 6.7]{DNPV12}, \cite[Corollary 2.1]{Pon04}, and \cite[Proposition 3.2]{Sal20} for the proofs.

\begin{theorem}[Fractional Sobolev inequality]
Let $p \in (1, n/s)$ and $R > 0$. There exists a constant $C = C(n, s, p) > 0$ such that
\begin{equation*}
\left( \int_{B_R} |u(x)|^{p^{\ast}} \,\mathrm{d}x \right)^{p/p^{\ast}} \leq C \left( \int_{B_R} \int_{B_R} \frac{|u(x)-u(y)|^p}{|x-y|^{n+sp}} \,\mathrm{d}y \,\mathrm{d}x + R^{-sp} \int_{B_R} |u(x)|^p \,\mathrm{d}x \right)
\end{equation*}
for all $u \in W^{s, p}(B_R)$, where $p^{\ast} = np/(n-sp)$.
\end{theorem}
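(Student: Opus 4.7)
The plan is to reduce to the unit-ball case $R=1$ by a scaling argument, and then deduce the $R=1$ case from the classical fractional Sobolev embedding on $\mathbb{R}^n$ combined with a bounded extension operator. The precise form of the right-hand side is forced by the rescaling $v(x):=u(Rx)$: a change of variables gives $\|v\|_{L^{p^*}(B_1)}^{p^*} = R^{-n}\|u\|_{L^{p^*}(B_R)}^{p^*}$, $[v]_{W^{s,p}(B_1)}^p = R^{sp-n}[u]_{W^{s,p}(B_R)}^p$, and $\|v\|_{L^p(B_1)}^p = R^{-n}\|u\|_{L^p(B_R)}^p$. Consequently, any estimate of the form $\|v\|_{L^{p^*}(B_1)}^p \leq C\bigl([v]_{W^{s,p}(B_1)}^p + \|v\|_{L^p(B_1)}^p\bigr)$ on the unit ball, once multiplied through by $R^{np/p^*} = R^{n-sp}$ and combined with the identity $p/p^* = 1 - sp/n$, immediately reproduces the claimed inequality with the same constant $C=C(n,s,p)$.

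For the $R=1$ case, I would first invoke the global embedding $\|w\|_{L^{p^*}(\mathbb{R}^n)}^p \leq C_0\,[w]_{W^{s,p}(\mathbb{R}^n)}^p$ valid for all $w \in W^{s,p}(\mathbb{R}^n)$, which is the content of \cite[Theorem~6.5]{DNPV12}; its proof relies on a dyadic decomposition of the Gagliardo double integral together with the layer-cake representation of the $L^{p^*}$ norm, effectively reducing the endpoint statement to a $p=1$ isoperimetric-type inequality. Since $B_1$ is a smooth bounded Lipschitz domain, there exists a bounded linear extension $E \colon W^{s,p}(B_1) \to W^{s,p}(\mathbb{R}^n)$ (constructed by reflection across $\partial B_1$ followed by a smooth cutoff) with operator norm depending only on $n,s,p$. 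Applying the global embedding to $Eu$ and restricting to $B_1$ then yields the $R=1$ version of the inequality.

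The main obstacle is really the endpoint embedding $W^{s,p}(\mathbb{R}^n) \hookrightarrow L^{np/(n-sp)}(\mathbb{R}^n)$ on the whole space, which does not follow from soft interpolation arguments and instead requires the careful dyadic execution of \cite{DNPV12}. Once that ingredient is accepted, the extension on a ball and the change of variables are routine. As an alternative route that bypasses extension entirely, one could follow the direct on-ball argument of \cite[Corollary~2.1]{Pon04}, which works within $B_R$ from the start and produces the correct $R$-scaling automatically; this is perhaps the cleanest path to the stated formulation.
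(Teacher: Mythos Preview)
Your proposal is correct, and in fact the paper does not prove this statement at all: it merely cites \cite[Theorem~6.7]{DNPV12}, \cite[Corollary~2.1]{Pon04}, and \cite[Proposition~3.2]{Sal20}. Your sketch---scaling to the unit ball, then applying the global embedding \cite[Theorem~6.5]{DNPV12} together with a bounded extension operator---is precisely the route taken in \cite[Theorem~6.7]{DNPV12}, and you even flag the alternative on-ball argument of \cite{Pon04}, so there is nothing to add.
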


\begin{theorem}[Fractional Poincar\'e inequality I] \label{poincare1}
	Let $p \in (1, \infty)$ and $R > 0$. There exists a constant $C = C(n, s, p) > 0$ such that
	\begin{align*}
	\int_{B_R} |u(x) - (u)_{B_R}|^p \,\mathrm{d}x \leq C R^{sp} \int_{B_R} \int_{B_R} \frac{|u(x)-u(y)|^p}{|x-y|^{n+sp}} \,\mathrm{d}y \,\mathrm{d}x
	\end{align*}
	for all $u \in W^{s,p}(B_R)$, where $(u)_{B_R} = \fint_{B_R} u(x) \,\mathrm{d}x$.
\end{theorem}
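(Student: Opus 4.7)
The plan is to reduce the inequality to the trivial observation that $|x-y|$ is bounded above by $2R$ on $B_R \times B_R$, after first using Jensen's inequality to express the mean deviation in terms of a double integral.

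First I would write, using $(u)_{B_R} = \fint_{B_R} u(y)\,\mathrm{d}y$ and Jensen's inequality applied to the convex function $t \mapsto |t|^p$ and the probability measure $|B_R|^{-1}\mathbf{1}_{B_R}\,\mathrm{d}y$,
\begin{equation*}
|u(x) - (u)_{B_R}|^p = \left| \fint_{B_R}\bigl(u(x)-u(y)\bigr)\,\mathrm{d}y \right|^p \leq \fint_{B_R} |u(x)-u(y)|^p\,\mathrm{d}y.
\end{equation*}
Integrating over $x \in B_R$ then gives
\begin{equation*}
\int_{B_R}|u(x)-(u)_{B_R}|^p\,\mathrm{d}x \leq \frac{1}{|B_R|}\int_{B_R}\int_{B_R}|u(x)-u(y)|^p\,\mathrm{d}y\,\mathrm{d}x.
\end{equation*}

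Next, since $x,y \in B_R$ implies $|x-y| \leq 2R$, I would insert the kernel trivially,
\begin{equation*}
|u(x)-u(y)|^p = \frac{|u(x)-u(y)|^p}{|x-y|^{n+sp}}\,|x-y|^{n+sp} \leq (2R)^{n+sp}\,\frac{|u(x)-u(y)|^p}{|x-y|^{n+sp}},
\end{equation*}
and substitute this into the previous double integral. Using $|B_R| = \omega_n R^n$ for a dimensional constant $\omega_n$, the prefactor becomes $\omega_n^{-1} 2^{n+sp} R^{sp}$, which is of the claimed form $C R^{sp}$ with $C = C(n,s,p)$.

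There is really no obstacle here: the only ingredient beyond Jensen's inequality is the boundedness of the ``distance factor'' $|x-y|$ on $B_R \times B_R$, which is precisely why this Poincar\'e-type inequality holds on a ball without any cancellation arguments. (A sharper, scale-invariant form — Poincar\'e on general John domains or with optimal constant — would require more work, but is not needed here.) I would simply verify that the measurability and integrability conditions are satisfied for $u \in W^{s,p}(B_R)$ so that Fubini is applicable in the display above, which is immediate from the definition of the Gagliardo seminorm.
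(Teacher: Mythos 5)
Your proof is correct and complete: Jensen's inequality for the mean, followed by the trivial bound $|x-y|\leq 2R$ to reinstate the kernel, yields the stated estimate with $C=\omega_n^{-1}2^{n+sp}$, and Tonelli/Fubini applies since the integrand is nonnegative and measurable. The paper does not prove this statement itself but cites references (e.g.\ \cite[Corollary 2.1]{Pon04}); those sources are aimed at sharper, $s$-uniform versions of the inequality (with constants that behave correctly as $s\to 1$), which is more than is needed here, so your elementary argument is entirely adequate for the statement as used in the paper.
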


\begin{theorem}[Fractional Poincar\'e inequality II]\label{poincare}
	Let $p \in (1, \infty)$ and $\Omega \subset \mathbb{R}^n$ be open and bounded. There exists a constant $C = C(n, s, p) > 0$ such that
	\begin{align*}
	\int_{\Omega} |u(x)|^p \,\mathrm{d}x \leq C (\mathrm{diam}(\Omega))^{sp} \int_{\mathbb{R}^n} \int_{\mathbb{R}^n} \frac{|u(x)-u(y)|^p}{|x-y|^{n+sp}} \,\mathrm{d}y \,\mathrm{d}x
	\end{align*}
	for all $u \in W_0^{s,p}(\Omega)$.
\end{theorem}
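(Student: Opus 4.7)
The plan is to reduce to smooth compactly supported test functions and then exploit the vanishing of $u$ outside $\Omega$ pointwise against a piece of the complement of positive measure. Since $W_0^{s,p}(\Omega)$ is defined in \eqref{eq:space-W} as the $V^{s,p}(\Omega|\mathbb{R}^n)$-closure of $C_c^\infty(\Omega)$, and both sides of the claimed inequality are continuous with respect to this norm, it suffices to establish the estimate for $u \in C_c^\infty(\Omega)$, for which $u \equiv 0$ on $\mathbb{R}^n \setminus \Omega$ holds in the classical sense.

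Set $d = \mathrm{diam}(\Omega)$ and fix any $x_0 \in \Omega$, so that $\Omega \subset B_d(x_0)$. The first key observation is that the annular region $A := B_{2d}(x_0) \setminus \Omega$ has Lebesgue measure at least $(2^n-1)\,\omega_n\, d^n$, where $\omega_n = |B_1|$. The second is that for every $x \in \Omega$ and every $y \in A$ one has $|x-y| \le 3d$ and $u(y) = 0$, so $|u(x)-u(y)|^p = |u(x)|^p$. Combining these and restricting the $y$-integration to $A$, I would estimate
\begin{equation*}
\int_{\mathbb{R}^n} \frac{|u(x)-u(y)|^p}{|x-y|^{n+sp}}\,\mathrm{d}y
\;\geq\; \int_A \frac{|u(x)|^p}{(3d)^{n+sp}}\,\mathrm{d}y
\;\geq\; \frac{(2^n-1)\,\omega_n}{3^{n+sp}}\, d^{-sp}\,|u(x)|^p.
\end{equation*}

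Integrating this pointwise inequality in $x$ over $\Omega$ and enlarging the outer integration from $\Omega$ to $\mathbb{R}^n$ (which only increases the right-hand side) yields
\begin{equation*}
\int_{\Omega} |u(x)|^p\,\mathrm{d}x \;\leq\; C(n,s,p)\, d^{sp} \int_{\mathbb{R}^n}\int_{\mathbb{R}^n} \frac{|u(x)-u(y)|^p}{|x-y|^{n+sp}}\,\mathrm{d}y\,\mathrm{d}x,
\end{equation*}
which is exactly the desired estimate, with an explicit admissible constant $C = 3^{n+sp}/[(2^n-1)\omega_n]$.

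There is no genuine obstacle: the argument is a standard "complement of positive measure plus vanishing trace'' trick, and the bounded diameter of $\Omega$ enters only through the elementary bound $|x-y| \le 3d$. The only point worth being careful about is the density reduction, which is immediate from the definition \eqref{eq:space-W}; passing to the limit of a $V^{s,p}(\Omega|\mathbb{R}^n)$-convergent sequence from $C_c^\infty(\Omega)$ preserves the left-hand side by the $L^p(\Omega)$-convergence and the right-hand side by the Gagliardo seminorm convergence (using that for functions supported in $\Omega$ the full $\mathbb{R}^n\times\mathbb{R}^n$ seminorm is controlled by twice the $\Omega \times \mathbb{R}^n$ seminorm).
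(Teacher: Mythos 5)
Your proof is correct, and every step checks out: the complement $B_{2d}(x_0)\setminus\Omega$ indeed contains the spherical shell $B_{2d}(x_0)\setminus\overline{B_d(x_0)}$ of measure $(2^n-1)\omega_n d^n$, the bound $|x-y|\le 3d$ is valid there, and the density reduction from $W_0^{s,p}(\Omega)$ to $C_c^\infty(\Omega)$ is justified exactly as you say via \eqref{eq:space-W}. The paper does not prove this inequality itself but defers to the cited references (e.g.\ \cite{Sal20}); your argument is the standard elementary one underlying those proofs, with an explicit constant $C=3^{n+sp}/[(2^n-1)\omega_n]$ depending only on $n$, $s$, $p$ as required.
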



\subsection{Harmonic function} \label{sec:harmonic}


Let us introduce a harmonic function with respect to $\mathcal{L}$ and define a regular boundary point with respect to $\mathcal{L}$. We first recall that a weak solution of the equation $\mathcal{L}u = 0$ can be redefined in a set of measure zero so that it is continuous.

\begin{theorem} \cite[Theorem 9]{KKP17} \label{thm:regularization}
Let $u$ be a weak supersolution of $\mathcal{L}u=0$ in $\Omega$. Then
\begin{equation*}
u(x) = \mathrm{ess} \liminf_{y \to x} u(y)
\end{equation*}
for a.e. $x \in \Omega$. In particular, $u$ has a representative that is lower semicontinuous in $\Omega$.
\end{theorem}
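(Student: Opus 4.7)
The plan is to identify the desired lower semicontinuous representative as the essential lower limit itself,
\[
u^{\star}(x) := \lim_{r \to 0^+} \essinf_{B_r(x)} u = \sup_{r > 0} \essinf_{B_r(x)} u,
\]
the two expressions being equal because $r \mapsto \essinf_{B_r(x)} u$ is nonincreasing. The function $u^{\star}$ is automatically lower semicontinuous in $\Omega$: if $x_n \to x$, then for any $\rho > 0$ one has $B_{\rho - \varepsilon}(x_n) \subset B_{\rho}(x)$ for $n$ large, whence $u^{\star}(x_n) \geq \essinf_{B_{\rho}(x)} u$, and $\liminf_n u^{\star}(x_n) \geq u^{\star}(x)$ follows upon sending $\rho \to 0^+$. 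The inequality $u^{\star} \leq u$ a.e.\ is equally general: at every approximate continuity point of $u$ (a set of full measure in $\Omega$), the values of $u$ concentrate near $u(x)$ with density one, so $\essinf_{B_r(x)} u \leq u(x) + \varepsilon$ for every $\varepsilon > 0$ and all small $r$.

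The nontrivial content is the reverse inequality $u(x_0) \leq u^{\star}(x_0)$ at every Lebesgue point $x_0$ of $u$, and this is where the supersolution property enters, through the interior weak Harnack inequality for $\mathcal{L}$-supersolutions as proved in \cite{DCKP14,DCKP16,KKP17}. Fix $B_{2R}(x_0) \subset \Omega$, set $m_{2R} := \essinf_{B_{2R}(x_0)} u$, and consider the shift $v_R := u - m_{2R}$, which is a weak supersolution (constants are $\mathcal{L}$-harmonic) and satisfies $v_R \geq 0$ on $B_{2R}(x_0)$. Applying the weak Harnack inequality in the form
\[
\left( \fint_{B_r(x_0)} v_R^t \,\mathrm{d}x \right)^{1/t} \leq C \essinf_{B_r(x_0)} v_R + C \left( \frac{r}{R} \right)^{sp/(p-1)} \mathrm{Tail}(v_{R,-}; x_0, R),
\]
valid for some small $t > 0$ and all $r \leq R/2$, we let $r \to 0^+$: the left-hand side tends to $u(x_0) - m_{2R}$ by Lebesgue differentiation, the essential infimum on the right decreases to $u^{\star}(x_0) - m_{2R}$, and the factor $(r/R)^{sp/(p-1)}$ drives the tail contribution to zero. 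This yields
\[
u(x_0) - m_{2R} \leq C \bigl( u^{\star}(x_0) - m_{2R} \bigr).
\]
Sending $R \to 0^+$, the monotone convergence $m_{2R} \nearrow u^{\star}(x_0)$ collapses both sides and gives $u(x_0) \leq u^{\star}(x_0)$, as required.

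The principal technical obstacle is the handling of the nonlocal tail term. The shifted supersolution $v_R$ satisfies $v_{R,-} \leq u_- + |m_{2R}|$ outside $B_{2R}(x_0)$, so $\mathrm{Tail}(v_{R,-}; x_0, R)$ is bounded but need not vanish as $R \to 0$; its disappearance in the iterated limit crucially relies on the tail being damped by the scaling factor $(r/R)^{sp/(p-1)}$ inside the weak Harnack inequality (with the two-step limit $r \to 0$ at fixed $R$, then $R \to 0$), together with the summability condition $u_- \in L^{p-1}_{sp}(\mathbb{R}^n)$ built into \Cref{def:weak-soln}. Once this tail bookkeeping is in place, the remainder is a routine combination of Lebesgue differentiation and the monotone behavior of $r \mapsto \essinf_{B_r(x_0)} u$.
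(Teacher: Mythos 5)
Your argument is correct and is essentially the proof of the cited result \cite{KKP17} (the paper itself only quotes the theorem without proof): lower semicontinuity of $u^{\star}$ and the inequality $u^{\star}\le u$ a.e.\ are soft, and the reverse inequality at Lebesgue points follows from the interior weak Harnack inequality with the $(r/R)^{sp/(p-1)}$-damped tail applied to $v_R=u-\essinf_{B_{2R}(x_0)}u$. The only step you leave implicit is that $m_{2R}=\essinf_{B_{2R}(x_0)}u>-\infty$, which is needed even to define $v_R$; this follows from the local boundedness from above of the weak subsolution $-u$ (or from the weak Harnack inequality itself), so the gap is harmless.
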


Let us next provide a definition of $\mathcal{L}$-harmonic function. See also \cite{KKL19,KKP17,Pal18}. We will define $\mathcal{L}$-superharmonic and $\mathcal{L}$-subharmonic functions later.

\begin{definition}
A function $u$ is said to be {\it $\mathcal{L}$-harmonic in $\Omega$} if it is a weak solution of $\mathcal{L}u=0$ in $\Omega$ that is continuous in $\Omega$.
\end{definition}

By \Cref{thm:DP} and \Cref{thm:regularization}, we know that there exists a unique $\mathcal{L}$-harmonic function with Sobolev boundary data.

\begin{definition} [Regular point] \label{def:regular}
A boundary point $x_0 \in \partial \Omega$ is said to be {\it regular (with respect to $\mathcal{L}$)} if, for each function $g \in V^{s, p}(\Omega|\mathbb{R}^n) \cap C(\mathbb{R}^n)$, the unique $\mathcal{L}$-harmonic function $u \in V^{s, p}(\Omega|\mathbb{R}^n) \cap C(\Omega)$ with $u-g \in W^{s, p}_0(\Omega)$ satisfies
\begin{equation} \label{eq:limit}
\lim_{\Omega \ni x \to x_0}u(x)=g(x_0).
\end{equation}
\end{definition}

\Cref{def:regular} corresponds to the definition of Sobolev regular boundary point \cite[Chapter 6]{HKM06} in the case of local operators. Another definition of regular boundary point, which is based on Perron's solutions for the fractional $p$-Laplacian, is provided in \cite{LL17}. This definition corresponds to that of regular boundary point \cite[Chapter 9]{HKM06} for local operators. We do not deal with Perron's solutions for the operator $\mathcal{L}$ within the scope of this paper, but $\mathcal{L}$-superharmonic and $\mathcal{L}$-subharmonic functions are useful for the theory.

\begin{definition} \label{def:superharmonic}
A function $u: \mathbb{R}^n \to (-\infty, \infty]$ is said to be {\it $\mathcal{L}$-superharmonic in $\Omega$} if it satisfies the following properties:
\begin{enumerate}[(i)]
\item
$u < +\infty$ almost everywhere in $\mathbb{R}^n$,
\item
$u$ is lower semicontinuous in $\Omega$,
\item
for each open set $\Omega' \subset\subset \Omega$ and each weak solution $v \in C(\overline{\Omega'})$ of $\mathcal{L}v=0$ in $\Omega'$ with $v_+ \in L^{\infty}(\mathbb{R}^n)$ such that $u \geq v$ on $\partial \Omega'$ and a.e. on $\mathbb{R}^n \setminus \Omega'$, it holds that $u \geq v$ in $\Omega'$,
\item
$u_- \in L^{p-1}_{sp}(\mathbb{R}^n)$.
\end{enumerate}
A function $u$ is said to be {\it $\mathcal{L}$-subharmonic in $\Omega$} if $-u$ is $\mathcal{L}$-superharmonic in $\Omega$.
\end{definition}

We collect some results on $\mathcal{L}$-superharmonic functions that we need in the sequel.

\begin{theorem} \cite[Theorem 1]{KKP17} \label{thm:KKP17}
If $u$ is an $\mathcal{L}$-superharmonic function in $\Omega$, then $u(x) = \liminf_{y \to x} u(y) = \mathrm{ess} \,\liminf_{y\to x} u(y)$ for every $x \in \Omega$. If, in addition, $u$ is locally bounded in $\Omega$ or $u \in W^{s, p}_{\mathrm{loc}}(\Omega)$, then it is a weak supersolution in $\Omega$.
\end{theorem}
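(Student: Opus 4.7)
The plan is to prove the two claims in turn, following the pattern established for local nonlinear potential theory in \cite{HKM06} and adapted to the nonlocal setting. For the pointwise identity, the lower semicontinuity assumption (ii) in \Cref{def:superharmonic} immediately yields $u(x) \le \liminf_{y \to x} u(y)$, and the bound $\liminf_{y\to x} u(y) \le \mathrm{ess}\,\liminf_{y \to x} u(y)$ is immediate since $\inf_{B_r(x)} \le \mathrm{ess}\,\inf_{B_r(x)}$ on every ball. For the reverse inequality, I would argue by contradiction: assume $u(x_0) < t < \mathrm{ess}\,\liminf_{y\to x_0} u(y)$ for some $t$. Then there exists $r > 0$ with $u \ge t$ a.e.\ in $B_r(x_0)$. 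For each $\rho \in (0, r)$, invoke \Cref{thm:DP} on $B_\rho(x_0)$ with exterior data $\min\{u, t\}$ (truncated from below by a large constant to ensure admissibility and boundedness of the positive part) to obtain a continuous $\mathcal{L}$-harmonic function $v_\rho$. Since $\min\{u, t\} \le u$ a.e.\ on $\mathbb{R}^n \setminus B_\rho(x_0)$, the comparison property (iii) forces $u \ge v_\rho$ on $B_\rho(x_0)$, so $v_\rho(x_0) \le u(x_0) < t$. On the other hand, the exterior data equals $t$ a.e.\ on the thick annulus $B_r(x_0) \setminus B_\rho(x_0)$, so a weak Harnack-type argument applied to $t - v_\rho \ge 0$, together with a quantitative estimate of the nonlocal tail of the truncated data on $\mathbb{R}^n \setminus B_r(x_0)$, forces $v_\rho(x_0) \to t$ as $\rho \to 0^+$, producing a contradiction.

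For the second assertion, the strategy is to reduce to bounded functions by truncation and then extract the weak supersolution inequality via an obstacle-problem characterization. The truncations $u_k := \min\{u, k\}$ are bounded $\mathcal{L}$-superharmonic functions since properties (i)--(iv) in \Cref{def:superharmonic} are preserved under taking the minimum with a constant. For each $u_k$, the $\mathcal{L}$-Poisson modification $(u_k)_B$ on a ball $B \subset\subset \Omega$ --- the continuous $\mathcal{L}$-harmonic function provided by \Cref{thm:DP} that equals $u_k$ outside $B$ --- satisfies $(u_k)_B \le u_k$ by (iii) and lies in $V^{s,p}(B|\mathbb{R}^n)$. A Caccioppoli-type estimate applied to $u_k - (u_k)_B$, which vanishes outside $B$, yields a local $W^{s,p}$-bound for $u_k$ in terms of $\sup_B |u_k|$ and $\mathrm{Tail}((u_k)_-; x_0, r)$. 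Monotone convergence together with the summability hypothesis $u_- \in L^{p-1}_{sp}(\mathbb{R}^n)$ transfers this bound to $u$ itself under either the local boundedness hypothesis or the assumption $u \in W^{s,p}_{\mathrm{loc}}(\Omega)$. Given a nonnegative test function $\varphi \in C_c^\infty(\Omega)$ with $\supp \varphi \subset\subset \Omega' \subset\subset \Omega$, I then consider the obstacle problem of minimizing $\mathcal{E}^k$ over
\begin{equation*}
\mathcal{K}_u := \{ w \in V^{s,p}(\Omega'|\mathbb{R}^n) : w \ge u \text{ on } \mathbb{R}^n,\ w - u \in W^{s,p}_0(\Omega') \}.
\end{equation*}
The unique minimizer $v$ is continuous and $\mathcal{L}$-superharmonic; since $u$ itself belongs to $\mathcal{K}_u$, minimality combined with comparison via (iii) forces $v = u$. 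The Euler--Lagrange variational inequality $\mathcal{E}^k(u, w - u) \ge 0$ for $w \in \mathcal{K}_u$, evaluated at the admissible perturbation $w = u + \varphi$, yields the desired inequality $\mathcal{E}^k(u, \varphi) \ge 0$.

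The main obstacle in both steps is the quantitative control of the nonlocal tail. In the pointwise identity it enters as the scale-dependent tail decay that drives $v_\rho(x_0) \to t$ uniformly in the exterior data, and in the passage to the weak supersolution property it enters through the Caccioppoli estimate bounding the local Sobolev norms of the truncations $u_k$. Both steps therefore rest critically on the interior local boundedness and weak Harnack machinery of \cite{DCKP14,DCKP16} and follow the structural template of \cite{HKM06} once those nonlocal tail bounds are in place.
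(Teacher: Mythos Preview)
This theorem is not proved in the paper; it is quoted from \cite[Theorem~1]{KKP17}, so there is no in-paper argument to compare against. Your sketch follows the right template, but the first part has a genuine gap.

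To invoke the comparison property (iii) in \Cref{def:superharmonic} you need $v_\rho \in C(\overline{B_\rho(x_0)})$ and $u \ge v_\rho$ \emph{pointwise} on $\partial B_\rho(x_0)$. You only know $u \ge t$ a.e.\ in $B_r(x_0)$, and lower semicontinuity does not upgrade this to a pointwise inequality on the sphere --- that upgrade is exactly the statement you are trying to prove. There is also a solvability issue: \Cref{thm:DP} needs the datum in $V^{s,p}(B_\rho|\mathbb{R}^n)$, and at this stage a truncation of $u$ is not known to carry any fractional Sobolev regularity (the definition gives only lower semicontinuity and $u_-\in L^{p-1}_{sp}$). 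The route taken in \cite{KKP17} avoids both problems by reversing the order of the two assertions. One first shows that every \emph{bounded} $\mathcal{L}$-superharmonic function is a weak supersolution, essentially via your obstacle-problem argument: the obstacle solution $v$ with obstacle and boundary values $u$ is a continuous weak supersolution with $v\ge u$; it is a weak \emph{solution} on the open set $\{v>u\}$, and property (iii) applied on compactly contained pieces of that set (using continuity of $v$ and lower semicontinuity of $u$ on the boundary) forces $u\ge v$ there, whence $v=u$. Your phrase ``minimality combined with comparison via (iii) forces $v=u$'' skips this; minimality alone is not what closes the loop. Once the truncations $u_k=\min\{u,k\}$ are known to be weak supersolutions, the interior weak Harnack inequality applied to $u_k$ gives $\mathrm{ess}\inf_{B_\rho(x)} u_k \le C\inf_{B_{\rho/2}(x)} u_k + C\,\mathrm{Tail}((u_k)_-;x,\rho)$, and letting $k\to\infty$, $\rho\to 0$ yields $\mathrm{ess}\,\liminf_{y\to x} u(y)\le u(x)$ directly, without constructing any Dirichlet comparison function.
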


\begin{remark} \label{rmk:supersoln}
\begin{enumerate}[(i)]
\item
\cite[Corollary 7]{KKP17} A function $u$ is $\mathcal{L}$-harmonic in $\Omega$ if and only if $u$ is both $\mathcal{L}$-superharmonic and $\mathcal{L}$-subharmonic in $\Omega$.
\item
\cite[Theorem 12]{KKP17} If $u$ is a lower semicontinuous weak supersolution of $\mathcal{L}u=0$ in $\Omega$ satisfying
\begin{equation*}
u(x) = \mathrm{ess} \liminf_{y\to x} u(y)
\end{equation*}
for every $x \in \Omega$, then $u$ is $\mathcal{L}$-superharmonic in $\Omega$.
\end{enumerate}
\end{remark}


\subsection{Potential theory} \label{sec:potential}


By using $\mathcal{L}$-harmonic functions, we define the $\mathcal{L}$-potential, which will play a crucial role in the development of the theory.

\begin{definition}
Let $\Omega$ be a bounded open set and $K \subset \Omega$ be a compact set. Let $\psi: \mathbb{R}^n \to [0,1]$ be such that $\psi \in C_c^{\infty}(\Omega)$ and $\psi = 1$ on $K$. The $\mathcal{L}$-harmonic function in $\Omega \setminus K$ with $u-\psi \in W^{s, p}_0(\Omega\setminus K)$ is called the {\it $\mathcal{L}$-potential of $K$ in $\Omega$} and denoted by $\mathfrak{R}(K, \Omega)$.
\end{definition}

By \Cref{thm:comparison} it is immediate that the definition of $\mathfrak{R}(K, \Omega)$ is independent of the particular choice of $\psi$. 

We next recall the definition of the $(s, p)$-capacity, which is one of the fundamental quantities in the potential theory, see \cite{Bjo21,HKM06,Maz79}. Suppose that $K$ is a compact subset of $\Omega$ and let
\begin{equation*}
W(K, \Omega) := \lbrace v \in C_c^{\infty}(\Omega): v \geq 1 \text{ on } K \rbrace.
\end{equation*}
The {\it $(s, p)$-capacity of $K$ with respect to $\Omega$} is defined by
\begin{equation} \label{eq:def-capacity}
\mathrm{cap}_{s, p}(K, \Omega) = \inf_{v \in W(K, \Omega)} \mathcal{E}^{s, p}(v) = \inf_{v \in W(K, \Omega)} \int_{\mathbb{R}^n} \int_{\mathbb{R}^n} \frac{|v(x)-v(y)|^p}{|x-y|^{n+sp}} \,\mathrm{d}y \,\mathrm{d}x.
\end{equation}
By definition, $\mathrm{cap}_{s, p}(\cdot, \cdot)$ is increasing in the first argument and decreasing in the second argument with respect to the set inclusion. Moreover, by using an approximation, the set $W(K, \Omega)$ in the definition \eqref{eq:def-capacity} can be replaced by a larger set
\begin{equation*}
W_0(K, \Omega) := \overline{W(K, \Omega)}^{W^{s, p}(\mathbb{R}^n)}.
\end{equation*}
A function in $W_0(K, \Omega)$ is said to be {\it admissible for $\mathrm{cap}_{s, p}(K, \Omega)$}.

We now would like to propose useful properties of the $\mathcal{L}$-potential. For this purpose, we require a generalized notion of $(s,p)$-capacity, namely,
\begin{align*}
	\mathcal{C}^k(K, \Omega):=\inf_{v \in W(K, \Omega)} \mathcal{E}^{k}(v).
\end{align*}
Note that if $k(x,y)=|x-y|^{-n-sp}$, then $\mathcal{C}^k(K, \Omega)=\mathrm{cap}_{s,p}(K, \Omega)$.

\begin{lemma}\label{lem:potential}
	Suppose that $\Omega$ is open and bounded, and $K$ is a compact subset of $\Omega$.
	\begin{enumerate}[(i)]
		\item $\mathfrak{R}(K, \Omega)$ is admissible for $\mathrm{cap}_{s,p}(K, \Omega)$. In particular, $\mathfrak{R}(K, \Omega) \in W_0^{s, p}(\Omega)$.
		
		\item Let
		\begin{equation*}
			\widetilde{W}(K, \Omega) := \lbrace v \in C_c^{\infty}(\Omega): v = 1 \text{ in a neighborhood of } K \rbrace.
		\end{equation*}
	Then, we have
	\begin{align*}
		\mathcal{C}^k(K, \Omega)=\inf_{v \in \widetilde{W}(K, \Omega)} \mathcal{E}^k(v).
	\end{align*}

	\item $\mathfrak{R}(K, \Omega)$ is a minimizer of $\mathcal{E}^k$ among all functions belonging to $W_0(K, \Omega)$, i.e.,
	\begin{align*}
		\mathcal{E}^k(\mathfrak{R}(K, \Omega))=\mathcal{C}^k(K, \Omega).
	\end{align*}

	\item $\mathfrak{R}(K, \Omega)$ is a weak supersolution of $\mathcal{L}u=0$ in $\Omega$.
	\end{enumerate}
	
\end{lemma}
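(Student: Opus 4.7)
The plan is to fix once and for all a cutoff $\psi\in C_c^\infty(\Omega)$ with $0\le\psi\le 1$ and $\psi\equiv 1$ in an open neighborhood of $K$; the particular choice does not matter, since the comparison principle (\Cref{thm:comparison}) forces $u:=\mathfrak{R}(K,\Omega)$ to be independent of $\psi$. All four parts will exploit the structural decomposition $u=\psi+\phi$ with $\phi\in W_0^{s,p}(\Omega\setminus K)$. For (i), I would pick $\eta_n\in C_c^\infty(\Omega\setminus K)$ with $\eta_n\to\phi$ in $W^{s,p}(\mathbb{R}^n)$ and set $w_n:=\psi+\eta_n\in C_c^\infty(\Omega)$. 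Because $\supp(\eta_n)$ is a compact subset of $\Omega\setminus K$, $\eta_n$ vanishes in an open neighborhood of $K$ on which $\psi\equiv 1$; hence $w_n\equiv 1$ in a neighborhood of $K$, so $w_n\in W(K,\Omega)$, and passing to the $W^{s,p}$-limit gives $u\in W_0(K,\Omega)\subset W_0^{s,p}(\Omega)$.

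For (ii), the inequality $\mathcal{C}^k(K,\Omega)\le\inf_{\widetilde{W}(K,\Omega)}\mathcal{E}^k$ is trivial since $\widetilde{W}(K,\Omega)\subset W(K,\Omega)$. For the reverse, given $v\in W(K,\Omega)$ and $\varepsilon>0$, I would first truncate to $\tilde v:=\min(v,1)$, which belongs to $W_0^{s,p}(\Omega)$, equals $1$ on $K$, and satisfies $\mathcal{E}^k(\tilde v)\le\mathcal{E}^k(v)$ because $t\mapsto\min(t,1)$ is $1$-Lipschitz. Next, for small $\delta>0$, I would choose a cutoff $\eta_\delta\in C_c^\infty(\Omega)$ with $\eta_\delta\equiv 1$ on a $\delta$-neighborhood of $K$ and support in a slightly larger neighborhood, and form $\hat v_\delta:=\max(\tilde v,\eta_\delta)$, which equals $1$ in an open neighborhood of $K$. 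The difference $\hat v_\delta-\tilde v=(\eta_\delta-\tilde v)_+$ is supported in a shrinking set and uniformly bounded, so dominated convergence, applied to both the $L^p$-part and the Gagliardo double integral, gives $\hat v_\delta\to\tilde v$ in $W^{s,p}(\mathbb{R}^n)$ and hence $\mathcal{E}^k(\hat v_\delta)\to\mathcal{E}^k(\tilde v)$. A standard mollification then produces a function in $\widetilde{W}(K,\Omega)$ with energy within $\varepsilon$ of $\mathcal{E}^k(v)$. I expect this simultaneous truncation--extension--mollification step to be the main technical obstacle of the lemma.

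For (iii), the bound $\mathcal{E}^k(u)\ge\mathcal{C}^k(K,\Omega)$ follows by approximating $u$ with the $w_n$ from (i). Conversely, for any $v\in\widetilde{W}(K,\Omega)$ both $v$ and $\psi$ equal $1$ in a common neighborhood of $K$, so $v-\psi\in C_c^\infty(\Omega\setminus K)\subset W_0^{s,p}(\Omega\setminus K)$, whence $v-u=(v-\psi)-(u-\psi)\in W_0^{s,p}(\Omega\setminus K)$. Since $u$ is $\mathcal{L}$-harmonic in $\Omega\setminus K$ and lies in $V^{s,p}(\Omega\setminus K\mid\mathbb{R}^n)$, the extended test-function class permits $\mathcal{E}^k(u,v-u)=0$, and convexity of $w\mapsto\mathcal{E}^k(w)$ yields $\mathcal{E}^k(v)\ge\mathcal{E}^k(u)+p\,\mathcal{E}^k(u,v-u)=\mathcal{E}^k(u)$. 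Taking the infimum over $v$ and invoking (ii) produces $\mathcal{E}^k(u)=\mathcal{C}^k(K,\Omega)$.

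For (iv), given a nonnegative $\varphi\in C_c^\infty(\Omega)$ and any $t\ge 0$, the sequence $w_n+t\varphi$ from (i) still lies in $W(K,\Omega)$, since $w_n\equiv 1$ near $K$ and $\varphi\ge 0$, so $u+t\varphi\in W_0(K,\Omega)$. Applying (iii) gives $\mathcal{E}^k(u+t\varphi)\ge\mathcal{E}^k(u)$ for every $t\ge 0$, and differentiating at $t=0^+$ yields $p\,\mathcal{E}^k(u,\varphi)\ge 0$. Hence $u$ is a weak supersolution of $\mathcal{L}u=0$ in $\Omega$.
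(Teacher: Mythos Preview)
Your arguments for (i), (iii), and (iv) are correct and essentially coincide with the paper's proof: the same approximation $w_n=\psi+\eta_n$, the same use of $\mathcal{E}^k(u,v-u)=0$ for $v\in\widetilde{W}(K,\Omega)$ together with convexity (the paper phrases this via Young's inequality, which is equivalent), and the same first-variation argument for (iv).

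The gap is in (ii). Your dominated convergence claim for the Gagliardo double integral is not justified. Pointwise convergence of the integrand is fine, but you give no dominating function, and there is no obvious one: bounding $|\hat v_\delta(x)-\hat v_\delta(y)|$ via $|\tilde v(x)-\tilde v(y)|+|\eta_\delta(x)-\eta_\delta(y)|$ fails because $[\eta_\delta]_{W^{s,p}}$ need not stay bounded as $\delta\to 0$, while bounding it by $\max\{(\eta_\delta-\tilde v)_+(x),(\eta_\delta-\tilde v)_+(y)\}\le 1$ produces a majorant that is \emph{not} integrable against $|x-y|^{-n-sp}$ near the diagonal. The statement $\hat v_\delta\to\tilde v$ in $W^{s,p}$ may well be true, but it needs a different argument, and you yourself flag this as ``the main technical obstacle.''

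The paper sidesteps the whole issue with a one-line trick that you should know: given $\varphi\in W(K,\Omega)$, compose with smooth functions $\eta_j:\mathbb{R}\to[0,1]$ satisfying $\eta_j\equiv 0$ near $(-\infty,0]$, $\eta_j\equiv 1$ near $[1,\infty)$, and $\|\eta_j'\|_\infty\le 1+1/j$. Then $\eta_j\circ\varphi\in\widetilde{W}(K,\Omega)$ directly (smooth, compactly supported in $\Omega$, and equal to $1$ on the open set $\{\varphi>1-\epsilon_j\}\supset K$), and the chain-rule bound $|\eta_j(\varphi(x))-\eta_j(\varphi(y))|\le(1+1/j)|\varphi(x)-\varphi(y)|$ gives $\mathcal{E}^k(\eta_j\circ\varphi)\le(1+1/j)^p\mathcal{E}^k(\varphi)$ immediately. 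No truncation, no $\max$ with a shrinking cutoff, no mollification, no dominated convergence.
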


\begin{proof}
	\begin{enumerate}[(i)]
		\item Let $\psi \in C_c^{\infty}(\Omega)$ be such that $0 \leq \psi \leq 1$ and $\psi =1$ on $K$. By the definition of $\mathcal{L}$-potential, there exists a sequence of functions $\{\varphi_j\}_{j=1}^{\infty} \subset C_c^{\infty}(\Omega \setminus K)$ such that 
		\begin{align*}
			\varphi_j \to \mathfrak{R}(K, \Omega)-\psi \quad \text{ in } W^{s, p}(\mathbb{R}^n).
		\end{align*}
	Since $\varphi_j+\psi \in W(K, \Omega)$, the desired result follows.
		
		\item Since $\widetilde{W}(K, \Omega) \subset W(K, \Omega)$, it is immediate that
		\begin{align*}
			\mathcal{C}^k(K, \Omega) \leq \inf_{v \in \widetilde{W}(K, \Omega)} \mathcal{E}^k(v).	
		\end{align*}
	For the converse inequality, given $\varepsilon>0$, let $\varphi \in W(K, \Omega)$ be such that 
	\begin{align*}
		\mathcal{E}^k(\varphi) \leq \mathcal{C}^k(K, \Omega)+\varepsilon.
	\end{align*}
	We denote by $\{\eta_j\}_{j=1}^{\infty}$ a sequence of functions belonging to $C^{\infty}(\mathbb{R})$ such that $0 \leq \eta_j(t) \leq 1$, $\eta_j(t)=0$ in a neighborhood of $(-\infty, 0]$, $\eta_j(t)=1$ in a neighborhood of $[1, \infty)$, and $\|\eta_j'\|_{\infty} \leq 1+1/j$ for all $j \in \mathbb{N}$. Then, since $\eta_j(\varphi(x)) \in \widetilde{W}(K, \Omega)$, we have
	\begin{align*}
		\inf_{v \in \widetilde{W}(K, \Omega)} \mathcal{E}^k(v) &\leq \int_{\mathbb{R}^n}\int_{\mathbb{R}^n} |\eta_j(\varphi(x))-\eta_j(\varphi(y))|^p k(x,y) \,\mathrm{d}y\,\mathrm{d}x \\
		&\leq \|\eta_j'\|_{\infty}^p\int_{\mathbb{R}^n}\int_{\mathbb{R}^n} |\varphi(x)-\varphi(y)|^p k(x,y) \,\mathrm{d}y\,\mathrm{d}x \\
		&\leq (1+1/j)^p \cdot (\mathcal{C}^k(K, \Omega)+\varepsilon).
	\end{align*}
By letting $j \to \infty$ and $\varepsilon \to 0$, we finish the proof.

	\item Since $W_0(K, \Omega)$ is the completion of $W(K, \Omega)$ in $W^{s, p}(\mathbb{R}^n)$, we have
	\begin{align*}
		\mathcal{C}^k(K, \Omega)=\inf_{v \in W_0(K, \Omega)} \mathcal{E}^{k}(v).
	\end{align*}
	Then, it follows from part (i) that 
	\begin{align*}
		\mathcal{C}^k(K, \Omega) \leq \mathcal{E}^{k}(\mathfrak{R}(K, \Omega)).
	\end{align*}
	For the reversed inequality, let $v \in \widetilde{W}(K, \Omega)$. Then, since $\mathfrak{R}(K, \Omega)-v \in W_0^{s,p}(\Omega \setminus K)$ and $\mathfrak{R}(K, \Omega)$ is $\mathcal{L}$-harmonic in $\Omega \setminus K$, we obtain
	\begin{align*}
		\mathcal{E}^k(\mathfrak{R}(K, \Omega), \mathfrak{R}(K, \Omega)-v)= 0.
	\end{align*}
	This, together with Young's inequality, implies that 
	\begin{align*}
		\mathcal{E}^k(\mathfrak{R}(K, \Omega))=\mathcal{E}^k(\mathfrak{R}(K, \Omega), v) \leq \frac{p-1}{p}\mathcal{E}^k(\mathfrak{R}(K, \Omega))+\frac{1}{p}\mathcal{E}^k(v),
	\end{align*}
	and hence $\mathcal{E}^k(\mathfrak{R}(K, \Omega)) \leq \mathcal{E}^k (v)$. By taking the infimum over $\widetilde{W}(K, \Omega)$ and recalling part (ii), the conclusion follows.
	
	\item From part (i), we know that $\mathfrak{R}(K, \Omega) \in W^{s,p}_{\mathrm{loc}}(\Omega) \cap L_{sp}^{p-1}(\mathbb{R}^n)$. Let $\varphi \in C_c^{\infty}(\Omega)$ be nonnegative in $\Omega$. Then for any $t \geq 0$, $\mathfrak{R}(K, \Omega)+t\varphi$ belongs to $W_0(K, \Omega)$. By applying part (iii), we have
	\begin{align*}
		\left.\frac{\mathrm{d}}{\mathrm{d}t} \right|_{t=0} \mathcal{E}^k(\mathfrak{R}(K, \Omega)+t\varphi) \geq 0,
	\end{align*}
	which leads to $\mathcal{E}^k(\mathfrak{R}(K, \Omega), \varphi) \geq 0$.
	\end{enumerate}
\end{proof}

Furthermore, we can estimate the $(s, p)$-capacity of a closed ball $\overline{B_r(x_0)}$ with respect to a concentric ball $B_R(x_0)$ as follows:
\begin{lemma}\label{lem:cap}
	Let $0<r \leq R/2$, then
	\begin{equation*}
	\mathrm{cap}_{s,p}(\overline{B_r(x_0)}, B_R(x_0)) \simeq 
	\begin{cases}
	r^{n-sp}  &\text{if } n>sp, \\
	R^{n-sp} &\text{if } n<sp, \\
	(\log(R/r))^{1-p}  &\text{if } n=sp,
	\end{cases}
	\end{equation*}
	where the comparable constants depend only on $n$, $s$, and $p$.
\end{lemma}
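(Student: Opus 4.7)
\medskip
\noindent\textit{Proof plan.} By translation invariance I may take $x_0=0$. The plan is to produce matching upper and lower bounds in each of the three regimes; throughout, \Cref{lem:potential}(iii) lets me take the infimum over the closure $W_0(\overline{B_r},B_R)$, so Lipschitz trial functions are admissible.

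For the \emph{upper bounds} I would insert standard cutoff functions at the correct scale. When $n>sp$, I take $v$ equal to $1$ on $\overline{B_r}$, supported in $B_{2r}\subset B_R$, with $|\nabla v|\lesssim 1/r$; splitting $\mathcal{E}^{s,p}(v)$ into the diagonal region $\{|x-y|\leq r\}$ (handled by the Lipschitz bound) and the off-diagonal tail (handled by $\|v\|_\infty\leq 1$ and an explicit polar integral of $|x-y|^{-n-sp}$) yields $r^{n-sp}$. When $n<sp$ the analogous cutoff at the outer scale $R$ gives $R^{n-sp}$, since $\overline{B_r}\subset B_{R/2}$. For the borderline $n=sp$ the correct competitor is the truncated logarithm
\begin{equation*}
v(x)=\min\Bigl\{1,\,\frac{(\log(R/|x|))_+}{\log(R/r)}\Bigr\},
\end{equation*}
whose gradient is $\simeq 1/(|x|\log(R/r))$ on $B_R\setminus B_r$; the diagonal piece then reduces, after a polar computation using $n=sp$, to $(\log(R/r))^{-p}\int_r^R\rho^{-1}\,\mathrm{d}\rho=(\log(R/r))^{1-p}$, and the off-diagonal piece is of the same order.

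For the \emph{lower bounds} in the non-borderline cases I would let $v$ be any admissible function, truncate to assume $0\leq v\leq 1$, and invoke a Sobolev-type inequality. When $n>sp$, the fractional Sobolev embedding $W^{s,p}\hookrightarrow L^{p^\ast}$ with $p^\ast=np/(n-sp)$ combined with $v\geq 1$ on $B_r$ yields $r^{n}\leq\|v\|_{p^\ast}^{p^\ast}\lesssim\mathcal{E}^{s,p}(v)^{p^\ast/p}$, so $\mathcal{E}^{s,p}(v)\gtrsim r^{n-sp}$. When $n<sp$, the Morrey-type embedding $|v(x)-v(y)|^p\leq C[v]_{W^{s,p}}^p|x-y|^{sp-n}$, applied with $v(x)=1$ on $\overline{B_r}$ and $v(y)=0$ outside $B_R$ at distance $\simeq R$, forces $\mathcal{E}^{s,p}(v)\gtrsim R^{n-sp}$.

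The main obstacle will be the borderline $n=sp$, where neither embedding is available. My plan is a dyadic telescoping: set $\rho_j=2^j r$ for $j=-1,0,\dots,N+1$ with $N\sim\log_2(R/r)$ (chosen so that $\rho_{N+1}\simeq R$), and consider annuli $A_j=B_{\rho_{j+1}}\setminus B_{\rho_j}$, with the convention $A_{-1}=B_r$ and $A_{N+1}=B_{2R}\setminus B_R$. Writing $V_j=(v)_{A_j}$, we have $V_{-1}=1$ and $V_{N+1}=0$. The scale-invariant fractional Poincar\'e inequality on each dilated annulus (whose constant carries no scale factor precisely because $n=sp$) yields
\begin{equation*}
|V_j-V_{j+1}|^p\leq C\int_{A_j\cup A_{j+1}}\int_{A_j\cup A_{j+1}}\frac{|v(x)-v(y)|^p}{|x-y|^{n+sp}}\,\mathrm{d}x\,\mathrm{d}y.
\end{equation*}
Summing telescopically and applying H\"older's inequality in the $N+2$ summands gives
\begin{equation*}
1=|V_{-1}-V_{N+1}|^p\leq (N+2)^{p-1}\sum_{j}|V_j-V_{j+1}|^p\lesssim(\log(R/r))^{p-1}\,\mathcal{E}^{s,p}(v),
\end{equation*}
so $\mathcal{E}^{s,p}(v)\gtrsim(\log(R/r))^{1-p}$. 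Infimizing over $v$ concludes the proof.
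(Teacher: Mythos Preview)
Your proof is correct and takes a genuinely different route from the paper's. The paper does not work directly with the fractional capacity at all: it invokes a comparison from \cite{Bjo21} between $\mathrm{cap}_{s,p}(\overline{B_r},B_R)$ and the weighted variational $p$-capacity $\mathrm{cap}_{p,|t|^a}$ of balls in $\mathbb{R}^{n+1}$ (with weight exponent $a=p-1-sp$), and then reads off both bounds from the known estimates \cite[Theorems~2.18--2.19]{HKM06}, which compute that weighted capacity via the integral $\int_r^R \rho^{(sp-n)/(p-1)-1}\,\mathrm{d}\rho$. Your approach stays entirely within the fractional setting: explicit cutoffs (and the truncated logarithm) for the upper bounds, and the Sobolev embedding, the Morrey embedding, and a dyadic telescoping with scale-invariant Poincar\'e on annuli for the three lower bounds. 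What your argument buys is self-containment---no extension machinery and no appeal to the weighted local theory---at the price of doing the borderline case by hand. Two small caveats worth flagging: your claim that ``the off-diagonal piece is of the same order'' for the logarithmic competitor when $n=sp$ is true but not immediate from the crude bound $|v(x)-v(y)|\leq 1$; it needs a dyadic decomposition of the pair $(x,y)$ by the annuli containing each point (the cross-annulus contributions then form a convergent geometric series). Also, the Poincar\'e inequality you invoke on annuli is not \Cref{poincare1} (stated only for balls) but follows from it by a standard chaining or extension argument, which you should cite or sketch.
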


\begin{proof}
We recall from \cite[Lemma 2.4]{Bjo21} that
	\begin{align*}
	\mathrm{cap}_{p, |t|^a}(B_{r/2}(z_0), B_R(z_0)) \lesssim \mathrm{cap}_{s,p}(\overline{B_r(x_0)}, B_{R}(x_0)) \lesssim \mathrm{cap}_{p, |t|^a} (B_r(z_0), B_R(z_0)),
	\end{align*}
	where $a=p-1-sp$ and $z_0=(x_0, 0) \in \mathbb{R}^{n+1}$. See \cite{Bjo21,HKM06} for the precise definition of the weighted variational capacity $\mathrm{cap}_{p, |t|^a}$. For the upper bound, we apply \cite[Theorem 2.19]{HKM06} to conclude that
	\begin{align*}
	\mathrm{cap}_{p, |t|^a} (B_r(z_0), B_R(z_0)) 
	&\lesssim \left(\int_{B_R(z_0) \setminus B_r(z_0)} |z-z_0|^{np/(1-p)} |t|^{a/(1-p)} \,\mathrm{d}x \,\mathrm{d}t \right)^{1-p}\\
	&\simeq \left( \int_r^R \rho^{\frac{sp-n}{p-1}-1} \,\mathrm{d}\rho \right)^{1-p}\\
	&\simeq
	\begin{cases}
	r^{n-sp}  &\text{if } n>sp, \\
	R^{n-sp} &\text{if } n<sp, \\
	(\log(R/r))^{1-p}  &\text{if } n=sp,
	\end{cases}
	\end{align*}
	where $z=(x,t)$. The lower bound follows from \cite[Theorem 2.18]{HKM06}.
\end{proof}


\section{Local estimates up to the boundary} \label{sec:bdry-est}


To prove the Wiener criterion, it is crucial to study the behavior of weak solutions to $\mathcal{L}u=0$ near the boundary. In this section, we establish local estimates of weak solutions up to the boundary. More precisely, we study the local boundedness for weak subsolutions and the weak Harnack inequality for weak supersolutions up to the boundary by using Moser's iteration.


\subsection{Caccioppoli-type estimates up to the boundary} \label{sec:Caccioppoli}


The first step towards local estimates for weak solutions up to the boundary is to establish Caccioppoli-type estimates up to the boundary. For this purpose, we generalize the definition of inequality of $V^{s, p}(\Omega|\mathbb{R}^n)$ functions in the following way. Let $T$ be any set in $\mathbb{R}^n$ and $u$ be a $V^{s, p}(\Omega|\mathbb{R}^n)$ function. We say that {\it $u \leq 0$ on $T$ in the sense of $V^{s, p}(\Omega|\mathbb{R}^n)$} if $u_+ := \max \lbrace u, 0\rbrace$ is the limit in $V^{s, p}(\Omega|\mathbb{R}^n)$ of a sequence of $C^{0,1}(\mathbb{R}^n)$-functions with compact support in $\mathbb{R}^n \setminus T$. In a similar manner, we say that  $u \geq 0$ ($u \leq v$, respectively) {\it on $T$ in the sense of $V^{s, p}(\Omega|\mathbb{R}^n)$} if $-u \leq 0$ ($u-v \leq 0$, respectively) in the same sense. In particular, if $T = \mathbb{R}^n \setminus \Omega$, this definition coincides with the earlier one given in \Cref{sec:weak-soln}. We also define
\begin{equation*}
\wsup_{T} u = \inf \lbrace k \in \mathbb{R}: u \leq k \text{ on } T \text{ in the sense of } V^{s, p}(\Omega|\mathbb{R}^n) \rbrace
\end{equation*}
and $\winf_{T} u = - \wsup_{T}(-u)$.

We prove the following three lemmas, one for weak subsolutions and the others for weak supersolutions, at the same time as in the case of local operators, see \cite[Theorem 8.25 and 8.26]{GT01}. In what follows, we let $\beta, \gamma \in \mathbb{R}$ be such that $\gamma=\beta+p-1\neq 0$, and define quantities
\begin{equation} \label{eq:modular}
\Phi_{\gamma; \, x_0, R}(u) = \left( \fint_{B_R(x_0)} |u(x)|^{\gamma} \,\mathrm{d}x \right)^{1/\gamma},
\end{equation}
where $\fint_A$ denotes $|A|^{-1} \int_A$. Moreover, throughout the paper, we will denote a universal constant by $C > 0$, which may be different from line to line.

\begin{lemma} \label{lem:Caccio-subsoln}
Let $p \in (1, \infty)$ and assume $\gamma = \beta+p-1 > p-1$. If $u \in V^{s, p}(\Omega|\mathbb{R}^n)$ is a bounded weak subsolution of $\mathcal{L}u = 0$ in $\Omega$, then for any $x_0 \in \mathbb{R}^n$, $0<r<R$, and $d > 0$,
\begin{equation*}
\begin{split}
&\int_{B_{r}(x_0)} \int_{B_{r}(x_0)} \frac{|(u_M^+(x)+d)^{\gamma/p} - (u_M^+(y)+d)^{\gamma/p}|^p}{|x-y|^{n+sp}} \,\mathrm{d}y \,\mathrm{d}x \\
&\leq C\left( 1+ \left( \frac{|\gamma|}{|\beta|} \right)^p \right) R^{n-sp} \left( \frac{R}{R-r} \right)^p \Phi_{\gamma;\, x_0, R}^{\gamma}(u_M^++d) \\
&\quad + C \frac{|\gamma|^p}{|\beta|} R^{n-sp} \left( \frac{R}{R-r} \right)^{n+sp} \Phi_{\beta;\, x_0, R}^{\beta}(u_M^++d) \, \mathrm{Tail}^{p-1}(u_M^++d; x_0, R),
\end{split}
\end{equation*}
where
\begin{equation} \label{eq:M}
M = \wsup_{B_R(x_0) \setminus \Omega} u_+, \quad u_M^+(x) = \max\lbrace u(x), M \rbrace,
\end{equation}
and $C = C(n, s, p, \Lambda) > 0$.
\end{lemma}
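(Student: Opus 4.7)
The plan is to test the weak subsolution inequality $\mathcal{E}^k(u,\varphi) \leq 0$ against a positive power of $u_M^+ + d$ cut off on $B_R(x_0)$, then split the resulting energy into a local piece on $B_R \times B_R$ and a tail piece, and apply the pointwise algebraic inequalities from \Cref{sec:inequalities} to extract the Gagliardo seminorm of $(u_M^+ + d)^{\gamma/p}$. Concretely, choose $\eta \in C_c^\infty(B_R(x_0))$ with $\eta \equiv 1$ on $B_r(x_0)$, $0 \leq \eta \leq 1$, and $|\nabla \eta| \leq 2/(R-r)$, and set
$$\varphi := \eta^p \bigl[(u_M^+ + d)^\beta - (M+d)^\beta\bigr].$$
Since $\beta = \gamma - (p-1) > 0$ one has $\varphi \geq 0$. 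On the set $\{u \leq M\}$ the bracket vanishes, and by the $V^{s,p}$-meaning of $M = \wsup_{B_R(x_0)\setminus\Omega} u_+$ this bracket vanishes on $B_R(x_0) \setminus \Omega$ in the $V^{s,p}$-sense; together with $\supp \eta \Subset B_R(x_0)$, a standard density argument yields $\varphi \in W_0^{s,p}(\Omega)$, so $\varphi$ is admissible.

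Plugging $\varphi$ into $\mathcal{E}^k(u,\varphi) \leq 0$ and separating the integration region $\mathbb{R}^n \times \mathbb{R}^n$ into $B_R \times B_R$ and its complement (where $\varphi$ vanishes in one variable), the integrand of the local piece is pointwise estimated by an inequality of the form
$$|a-b|^{p-2}(a-b)\bigl(\eta_x^p (A+d)^\beta - \eta_y^p (B+d)^\beta\bigr) \geq c\min\{\eta_x,\eta_y\}^p \bigl|(A+d)^{\gamma/p} - (B+d)^{\gamma/p}\bigr|^p - C\,\tfrac{|\gamma|^p}{|\beta|}\,|\eta_x - \eta_y|^p \bigl(\max\{A,B\}+d\bigr)^\gamma,$$
applied with $a = u(x)$, $b = u(y)$, $A = u_M^+(x)$, $B = u_M^+(y)$; the residual contribution from the subtraction $(M+d)^\beta(\eta(x)^p - \eta(y)^p)$ is absorbed into the same crossterm using $(M+d)^\beta \leq (u_M^+ + d)^\beta$. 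The crossterm itself is estimated by $C\bigl(1 + (|\gamma|/|\beta|)^p\bigr) R^{n-sp}\bigl(\tfrac{R}{R-r}\bigr)^p \Phi_{\gamma;x_0,R}^\gamma(u_M^+ + d)$ via $|\eta(x) - \eta(y)| \leq C\min\{1, |x-y|/(R-r)\}$ and $\int_{B_R} |x-y|^{p-n-sp}\mathrm{d}y \leq C R^{p-sp}$. For the tail piece, using $|x - y| \geq \tfrac{R-r}{2R}|y - x_0|$ for $x \in \supp \eta$ and $y \notin B_R(x_0)$, the estimate
$$\int_{\mathbb{R}^n \setminus B_R(x_0)} \frac{(u_M^+(y) + d)^{p-1}}{|x-y|^{n+sp}}\,\mathrm{d}y \leq C\Bigl(\tfrac{R}{R-r}\Bigr)^{n+sp} R^{-sp}\, \mathrm{Tail}^{p-1}(u_M^+ + d;\, x_0, R)$$
holds uniformly in $x$; combining it with $|u(x)-u(y)|^{p-2}(u(x)-u(y))\varphi(x) \leq C\bigl((u_M^+(y)+d)^{p-1} + (u_M^+(x)+d)^{p-1}\bigr)\eta(x)^p(u_M^+(x)+d)^\beta$ and integrating in $x$ via $\int_{B_R}\eta^p(u_M^++d)^\beta \leq R^n \Phi_{\beta;x_0,R}^\beta(u_M^++d)$ yields precisely the Tail contribution in the statement, with the factor $|\gamma|^p/|\beta|$ arising from Young's inequality when the $(u_M^+(x)+d)^\gamma$ part is split between the Gagliardo term and the Tail term.

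The main obstacle is the pointwise algebraic inequality used in the local piece: because the truncation $u_M^+$ appears inside the nonlinear powers rather than $u$ itself, one must argue case-by-case on whether $u(x)$ and $u(y)$ exceed $M$, using that the left-hand side vanishes when $A = B = M$ and using monotonicity of $t \mapsto (t+d)^\beta$ on $[M, \infty)$ to compare $|u(x)-u(y)|$ with $|A - B|$ in the mixed cases. A secondary difficulty is the careful tracking of the $|\gamma|/|\beta|$ factors through the applications of Young's inequality so that the prefactors in the right-hand side of the claim are reproduced exactly; this is standard but tedious. Finally, the admissibility $\varphi \in W_0^{s,p}(\Omega)$ requires an explicit approximation of $(u_M^+ + d)^\beta - (M+d)^\beta$ by compactly supported Lipschitz functions in $\Omega$, relying on the $V^{s,p}$-sense interpretation of $M = \wsup_{B_R(x_0) \setminus \Omega} u_+$.
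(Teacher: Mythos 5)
Your proposal is correct and follows essentially the same route as the paper: the same test function $\varphi=\eta^p\bigl[(u_M^++d)^{\beta}-(M+d)^{\beta}\bigr]$, the same splitting into a $B_R\times B_R$ piece handled by the algebraic inequality of \Cref{lem:alg-ineq1} and a tail piece handled via $|x-y|\gtrsim\frac{R-r}{R}|y-x_0|$, and the same cut-off estimate for the crossterm. The only differences are cosmetic: the paper sidesteps your case analysis on whether $u(x),u(y)$ exceed $M$ by noting that $u_M^+$ is itself a weak subsolution and testing its equation directly (your case analysis does work, since in the mixed case $\varphi(y)=0$ and $(u(x)-u(y))_+\geq(u_M^+(x)-u_M^+(y))_+$), and you should take $\supp\eta\subset B_{(R+r)/2}(x_0)$ rather than merely $\eta\in C_c^{\infty}(B_R(x_0))$, since the geometric inequality you invoke for the tail fails for $x$ arbitrarily close to $\partial B_R(x_0)$.
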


\begin{lemma} \label{lem:Caccio-supersoln}
Let $p \in (1, \infty)$ and assume $\gamma = \beta+p-1 < p-1$, $\gamma \neq 0$. If $u \in V^{s, p}(\Omega|\mathbb{R}^n)$ is a weak supersolution of $\mathcal{L}u = 0$ in $\Omega$ such that $u \geq 0$ in $B_{R}(x_0)$ for some ball $B_{R}(x_0) \subset \mathbb{R}^n$, then for any $0<r<R$ and $d > 0$,
\begin{equation*}
\begin{split}
&\int_{B_{r}(x_0)} \int_{B_{r}(x_0)} \frac{|(u_m^-(x)+d)^{\gamma/p} - (u_m^-(y)+d)^{\gamma/p}|^p}{|x-y|^{n+sp}} \,\mathrm{d}y \,\mathrm{d}x \\
&\leq C\left( 1+ \frac{|\gamma|^p}{|\beta|} + \left( \frac{|\gamma|}{|\beta|} \right)^p \right) R^{n-sp} \left( \frac{R}{R-r} \right)^p \Phi_{\gamma;\,x_0, R}^{\gamma}(u_m^-+d) \\
&\quad + C \frac{|\gamma|^p}{|\beta|} R^{n-sp} \left( \frac{R}{R-r} \right)^{n+sp} \Phi_{\beta;\, x_0, R}^{\beta}(u_m^-+d) \, \mathrm{Tail}^{p-1} ((u_m^-+d)_-; x_0, R),
\end{split}
\end{equation*}
where 
\begin{equation} \label{eq:m}
m=\winf_{B_{R}(x_0) \setminus \Omega}u, \quad u_m^-(x)=\min\{u(x), m\},
\end{equation}
and $C = C(n, s, p, \Lambda) > 0$.
\end{lemma}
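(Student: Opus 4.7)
The approach parallels the interior Caccioppoli estimates of \cite{DCKP14,DCKP16} but with a test function adapted to the weak boundary condition encoded in $m$. Set $v := u_m^- + d$, fix a smooth cutoff $\tau \in C_c^\infty(B_{(R+r)/2}(x_0))$ with $\tau \equiv 1$ on $B_r(x_0)$, $0 \leq \tau \leq 1$, and $|\nabla \tau| \leq C/(R-r)$, and test against
$$\varphi := \tau^p \bigl(v^\beta - (m+d)^\beta\bigr).$$
Since $\beta = \gamma - (p-1) < 0$ and $v \leq m+d$ pointwise, $\varphi \geq 0$. Because $u \geq m$ on $B_R(x_0) \setminus \Omega$ in the $V^{s,p}$-sense we have $v \equiv m+d$ there, so $\varphi$ vanishes on $B_R(x_0) \setminus \Omega$; outside $B_R(x_0)$ the cutoff annihilates $\varphi$. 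Thus $\varphi \in W_0^{s,p}(\Omega)$, and the extension of the supersolution inequality to $W_0^{s,p}(\Omega)$ test functions (from the proposition in \Cref{sec:weak-soln}) gives $\mathcal{E}^k(u,\varphi) \geq 0$.

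\textbf{Local contribution.} Decompose $\mathcal{E}^k(u,\varphi) = I_1 + 2I_2$ with $I_1$ the integral over $B_R(x_0) \times B_R(x_0)$ and $I_2$ the integral over $B_R(x_0) \times (\mathbb{R}^n \setminus B_R(x_0))$. For $I_1$, apply the pointwise algebraic inequality from \Cref{sec:inequalities} — schematically, for $\beta < 0$ and $a,b > 0$,
$$|a-b|^{p-2}(a-b)(a^\beta - b^\beta) \geq c|\beta|\,\bigl|a^{\gamma/p} - b^{\gamma/p}\bigr|^p,$$
to $a = v(x),\, b = v(y)$, together with Young's inequality to absorb the $\tau$-cross terms arising from
$$\varphi(x) - \varphi(y) = \tau(x)^p v(x)^\beta - \tau(y)^p v(y)^\beta - (m+d)^\beta \bigl(\tau(x)^p - \tau(y)^p\bigr).$$
A case analysis on whether $u(x), u(y) \lessgtr m$ is required because $v^\beta$ is constant on $\{u \geq m\}$; monotonicity of $t \mapsto t^\beta$ ensures that the truncation and the shift $(m+d)^\beta$ never worsen the bound. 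The main positive contribution dominates the target left-hand side after restricting $\tau \equiv 1$ on $B_r(x_0)$, while the Young error is bounded by $C(1 + |\gamma|^p/|\beta| + (|\gamma|/|\beta|)^p) R^{n-sp}(R/(R-r))^p \Phi_{\gamma;x_0,R}^\gamma(v)$ using the standard estimate on $\iint_{B_R^2} |\tau(x)-\tau(y)|^p |x-y|^{-n-sp}\,\mathrm{d}y\,\mathrm{d}x$ and $v \leq m+d$.

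\textbf{Tail contribution.} For $I_2$, $\varphi(y) = 0$ off $B_R(x_0)$ reduces the integrand to $|u(x)-u(y)|^{p-2}(u(x)-u(y))\varphi(x)k(x,y)$. Combine the lower bound $|u(x)-u(y)|^{p-2}(u(x)-u(y)) \geq -C\bigl(u(y)_-^{p-1} + u(x)^{p-1}\bigr)$, the mean-value estimate $\varphi(x) \lesssim (|\gamma|/|\beta|) v(x)^\beta \tau(x)^p$ (obtained from $v^\beta - (m+d)^\beta \leq |\beta|(m+d)^{\beta-1}(m+d-v) \lesssim (|\gamma|/|\beta|) v^\beta$ via $v \leq m+d$), and the geometric comparison $|y-x|^{-n-sp} \lesssim (R/(R-r))^{n+sp}|y-x_0|^{-n-sp}$ valid for $x \in \mathrm{supp}\,\tau$ and $y \notin B_R(x_0)$. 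The $u(y)_-^{p-1}$ contribution integrates to the tail $\mathrm{Tail}^{p-1}((u_m^- + d)_-; x_0, R)$ multiplied by the prefactor $(|\gamma|^p/|\beta|) R^{n-sp}(R/(R-r))^{n+sp} \Phi_{\beta;x_0,R}^\beta(v)$, using that $u \geq 0$ in $B_R(x_0)$ forces $u_m^- \geq 0$ there so the negative part of $u_m^- + d$ lives entirely outside $B_R(x_0)$. The $u(x)^{p-1}$ contribution is absorbed into the $\Phi_\gamma^\gamma$ error using $u(x) \leq v(x)$ and $\beta + (p-1) = \gamma$. Collecting everything and dividing by $c|\beta|$ yields the stated inequality.

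\textbf{Main obstacle.} The principal technical difficulty is the compatibility of the truncation at level $m$ with the pointwise algebraic inequalities. The constant $(m+d)^\beta$ in $\varphi$, which is indispensable for $\varphi$ to vanish across $\partial \Omega$, generates additional cross terms in $\varphi(x) - \varphi(y)$ that must be controlled through the four-region case analysis on whether $u(x), u(y)$ lie above or below $m$, using crucially that $v^\beta$ is constant on $\{u \geq m\}$. A secondary bookkeeping issue is that the asymmetric tail prefactor $|\gamma|^p/|\beta|$ — as opposed to the more symmetric $(|\gamma|/|\beta|)^p$ — traces back to the one power of $|\beta|$ consumed when the inequality is divided through by the constant $c|\beta|$ from the main algebraic bound, and must be kept track of carefully.
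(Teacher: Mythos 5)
Your proposal follows the paper's proof essentially verbatim: the same test function $\eta^p\bigl(\bar u^\beta-(m+d)^\beta\bigr)$ with $\bar u=u_m^-+d$, the same splitting of $\mathcal{E}^k$ into the $B_R\times B_R$ part (handled by the pointwise algebraic inequality of \Cref{lem:alg-ineq1} and \Cref{lem:alg-ineq2} together with Young's inequality) and the tail part (handled by $|x-y|\gtrsim \frac{R-r}{R}|y-x_0|$ and the identity $\bar u^{p-1}\bar u^{\beta}=\bar u^{\gamma}$), with the same bookkeeping of the constants $|\gamma|^p/|\beta|$ and $(|\gamma|/|\beta|)^p$. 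The one blemish is your intermediate ``mean-value'' bound $\varphi(x)\lesssim (|\gamma|/|\beta|)\,v(x)^\beta\tau(x)^p$, whose stated derivation does not follow as written, but it is superfluous: since $\beta<0$ and $v\le m+d$, the trivial bound $0\le v^\beta-(m+d)^\beta\le v^\beta$ suffices and is what the paper uses.
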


\begin{lemma} \label{lem:Caccio-log}
Let $p \in (1, \infty)$. If $u \in V^{s, p}(\Omega|\mathbb{R}^n)$ is a weak supersolution of $\mathcal{L}u = 0$ in $\Omega$ such that $u \geq 0$ in $B_{R}(x_0)$ for some ball $B_{R}(x_0) \subset \mathbb{R}^n$, then for any $0<r<R$ and $d > 0$,
\begin{equation*}
\begin{split}
&\int_{B_{r}(x_0)} \int_{B_{r}(x_0)} \frac{|\log (u_m^-(x)+d) - \log (u_m^-(y)+d)|^p}{|x-y|^{n+sp}} \,\mathrm{d}y \,\mathrm{d}x \\
&\leq CR^{n-sp} \left( \left( \frac{R}{R-r} \right)^p + \left( \frac{R}{R-r} \right)^{n+sp} \Phi_{1-p;\, x_0, R}^{1-p}(u_m^-+d) \,\mathrm{Tail}^{p-1}((u_m^-+d)_-; x_0, R) \right),
\end{split}
\end{equation*}
where $u_m^-$ is defined as in \eqref{eq:m} and $C = C(n, s, p, \Lambda) > 0$.
\end{lemma}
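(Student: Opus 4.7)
I would adapt the argument for \Cref{lem:Caccio-supersoln} to the borderline exponent $\beta = 1-p$ (so $\gamma = 0$), in which the power $(u_m^-+d)^{\gamma/p}$ degenerates to a constant and must be replaced by $\log(u_m^-+d)$ via a logarithmic algebraic inequality. Fix an intermediate radius, say $\rho = (R+r)/2$, and pick a cutoff $\eta \in C_c^\infty(B_\rho(x_0))$ with $\eta \equiv 1$ on $B_r(x_0)$, $0 \le \eta \le 1$, and $|\nabla \eta| \le C/(R-r)$. Writing $w := u_m^- + d$, the proposed test function is
\[
\varphi(x) = \eta(x)^p\bigl[w(x)^{1-p} - (m+d)^{1-p}\bigr].
\]
Since $u_m^- \le m$ and $1-p<0$, the bracket is nonnegative; since $u \ge m$ on $B_R \setminus \Omega$ in the weak sense the bracket vanishes there; and since $\eta$ is supported in $B_\rho \subset B_R$, one checks $\varphi \ge 0$ and $\varphi \in W_0^{s,p}(\Omega)$, so that $\mathcal{E}^k(u,\varphi) \ge 0$ is a legal test after the usual approximation (justified by the proposition following \Cref{def:weak-soln}).

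Next, split $\mathcal{E}^k(u,\varphi) = I_1 + 2 I_2$ with $I_1$ over $B_\rho(x_0) \times B_\rho(x_0)$ and $I_2$ over $B_\rho(x_0) \times (\mathbb{R}^n \setminus B_\rho(x_0))$ (using symmetry of $k$ and $\varphi \equiv 0$ outside $B_\rho$). For $I_1$, apply a pointwise algebraic inequality of the form
\[
|a-b|^{p-2}(a-b)\bigl(\eta_x^p w_x^{1-p} - \eta_y^p w_y^{1-p}\bigr) \le -c\,(\eta_x \wedge \eta_y)^p\bigl|\log w_x - \log w_y\bigr|^p + C\,|\eta_x - \eta_y|^p,
\]
with $a=u(x)$, $b=u(y)$, $\eta_x=\eta(x)$, $w_x=w(x)$; on the set $\{u > m\}$ the monotonicity of $t \mapsto |t|^{p-2}t$ lets one replace differences of $u$ by sign-compatible differences of $w$. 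Integrating against $k(x,y)\,\mathrm{d}y\,\mathrm{d}x$, the first term produces the desired log double integral restricted to $B_r \times B_r$ (where $\eta_x \wedge \eta_y \equiv 1$), while the error is dominated by $CR^{n-sp}(R/(R-r))^p$ via $|\eta_x-\eta_y|^p \lesssim |\nabla\eta|^p |x-y|^p$ and the ellipticity bound on $k$. For $I_2$, use $\eta(y)=0$ and $\varphi(x) \le \eta(x)^p w(x)^{1-p}$; then bound $|u(x)-u(y)|^{p-2}(u(x)-u(y)) \le C\bigl(w(x)^{p-1} + w(y)_-^{p-1}\bigr)$ (splitting on the sign of $u(x)-u(y)$ and using $u(y) \ge -w(y)_- - d$) together with $|x-y|^{-n-sp} \lesssim (R/(R-r))^{n+sp}|y-x_0|^{-n-sp}$ for $x \in B_\rho$ and $y \notin B_\rho$ to obtain the tail term
\[
C\Bigl(\frac{R}{R-r}\Bigr)^{n+sp} R^{n-sp}\,\Phi_{1-p;\,x_0,R}^{1-p}(w)\,\mathrm{Tail}^{p-1}(w_-;\,x_0,R).
\]
Combining the two pieces in $0 \le I_1 + 2 I_2$ and rearranging gives the claim.

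The main obstacle is the borderline algebraic inequality feeding $I_1$: the generic power-type chain rule from \Cref{lem:Caccio-supersoln} develops a $1/|\gamma|$ singularity at $\gamma = 0$, so a direct logarithmic substitute must be verified. By homogeneity this reduces to a one-variable inequality of the shape $(t-1)^{p-1}(1-t^{1-p}) \ge c|\log t|^p$ for $t>0$ (with a two-sided form when $p<2$), proved by calculus, which must be combined with a Young-type splitting to insert the cutoff $\eta^p$ and shift the $\eta$-gradient error into the remainder without spoiling the constant in front of the log term. Once this inequality, to be recorded in \Cref{sec:inequalities}, is in hand, the remaining bookkeeping is strictly parallel to the proofs of \Cref{lem:Caccio-subsoln} and \Cref{lem:Caccio-supersoln}, and no further novelty is required.
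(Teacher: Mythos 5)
Your plan coincides with the paper's proof: the same test function $\varphi=(\bar u^{\beta}-\bar m^{\beta})\eta^{p}$ with $\beta=1-p$, the same near/far splitting of $\mathcal{E}^k(\cdot,\varphi)\ge 0$, the same logarithmic algebraic inequality replacing the degenerate $\gamma=0$ power chain rule (recorded in the paper as \Cref{lem:alg-ineq-log}, proved exactly via the one-variable estimate $\bigl(\tfrac{a-b}{a+d}\bigr)^{p}\le|\log(a+d)-\log(b+d)|^{p}\le(a-b)^{p-1}(f(a)-f(b))$ plus a Young-type insertion of the cutoff), and the same tail bound for the far-field term. The only cosmetic caveat is that the pointwise inequality should be stated for the truncated arguments $a,b\in[0,m]$ with the constant shift $(m+d)^{1-p}$ retained (as in the paper's $f$), since the unshifted version fails for unbounded $a$; this is exactly what your chosen test function already encodes.
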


\begin{proof} [Proof of \Cref{lem:Caccio-subsoln}, \Cref{lem:Caccio-supersoln}, and \Cref{lem:Caccio-log}]
Let us write $B_R = B_R(x_0)$ for simplicity. Let $\eta \in C_c^{\infty}(B_{(R+r)/2})$ be a cut-off function such that $\eta \equiv 1$ on $B_r$, $0 \leq \eta \leq 1$, and $|\nabla \eta| \leq C/(R-r)$. Note that $\beta > 0$ when $u$ is a weak subsolution and $\beta < 0$ when $u$ is a weak supersolution. For $d > 0$, we set $\bar{u} = u_M^++d$ if $u$ is a weak subsolution and $\bar{u} = u_m^-+d$ if $u$ is a weak supersolution. Note that, if $u$ is either a weak subsolution or supersolution, then so is $\bar{u}$. We define
\begin{equation*}
\varphi = 
\begin{cases}
(\bar{u}^{\beta} - \bar{M}^\beta) \eta^p &\text{if } \beta > 0, \\
(\bar{u}^{\beta} - \bar{m}^\beta) \eta^p &\text{if } \beta < 0,
\end{cases}
\end{equation*}
where $\bar{M} = M+d$ and $\bar{m} = m+d$. Then, $\varphi$ is a nonnegative test function in $W^{s, p}_0(\Omega)$. Thus, we have $\mathcal{E}^k(\bar{u}, \varphi) \leq 0$ if $u$ is a weak subsolution and $\mathcal{E}^k(\bar{u}, \varphi) \geq 0$ if $u$ is a weak supersolution. We split the double integral in $\mathcal{E}^k$ into two parts as
\begin{equation*}
\begin{split}
\mathcal{E}^k(\bar{u}, \varphi)
&= \int_{B_R} \int_{B_R} |\bar{u}(x)-\bar{u}(y)|^{p-2}(\bar{u}(x)-\bar{u}(y))(\varphi(x)-\varphi(y)) k(x, y) \,\mathrm{d}y \,\mathrm{d}x \\
&\quad+ 2\int_{B_R} \int_{\mathbb{R}^n \setminus B_R} |\bar{u}(x)-\bar{u}(y)|^{p-2}(\bar{u}(x)-\bar{u}(y)) \varphi(x) k(x, y) \,\mathrm{d}y \,\mathrm{d}x.
\end{split}
\end{equation*}
If $\beta \neq 0$ and $\gamma \neq 0$, then an application of \Cref{lem:alg-ineq1} with $a=\bar{u}(x)-d$, $b=\bar{u}(y)-d$, $\eta_1=\eta(x)$, $\eta_2=\eta(y)$, and $l = M$ if $\beta > 0$ and $l= m$ if $\beta < 0$ shows that
\begin{equation} \label{eq:Caccio-12}
\begin{split}
&\int_{B_R} \int_{B_R} \left| \bar{u}^{\gamma/p}(x)\eta(x)-\bar{u}^{\gamma/p}(y)\eta(y) \right|^{p} k(x, y) \,\mathrm{d}y \,\mathrm{d}x \\
&\leq C \left( 1+\left( \frac{|\gamma|}{|\beta|} \right)^p \right) \int_{B_R} \int_{B_R} \max\lbrace \bar{u}^{\gamma}(x), \bar{u}^{\gamma}(y) \rbrace |\eta(x)-\eta(y)|^p k(x, y) \,\mathrm{d}y \,\mathrm{d}x \\
&\quad - C \frac{|\gamma|^p}{\beta} \int_{B_R} \int_{\mathbb{R}^n \setminus B_R} |\bar{u}(x)-\bar{u}(y)|^{p-2} (\bar{u}(x)-\bar{u}(y)) \varphi(x) k(x, y) \,\mathrm{d}y \,\mathrm{d}x \\
&=: I_1 + I_2,
\end{split}
\end{equation}
where $C = C(p) > 0$. We obtain that, for $\beta > 0$,
\begin{equation} \label{eq:Caccio-2a}
\begin{split}
I_2
&\leq C \frac{|\gamma|^p}{|\beta|} \int_{B_R} \int_{\mathbb{R}^n \setminus B_R} (\bar{u}(y)-\bar{u}(x))^{p-1} (\bar{u}^\beta(x) - \bar{M}^\beta) \eta^p(x) {\bf 1}_{\lbrace \bar{u}(x) \leq \bar{u}(y) \rbrace} k(x, y) \,\mathrm{d}y \,\mathrm{d}x \\
&\leq C \frac{|\gamma|^p}{|\beta|} \int_{B_R} \int_{\mathbb{R}^n \setminus B_R} \bar{u}^{p-1}(y) \bar{u}^\beta(x) \eta^p(x) k(x, y) \,\mathrm{d}y \,\mathrm{d}x \\
&\leq C \frac{|\gamma|^p}{|\beta|} \int_{B_R} \bar{u}^{\beta}(x) \,\mathrm{d}x \left( \sup_{x \in \mathrm{supp} \, \eta} \int_{\mathbb{R}^n \setminus B_R} \bar{u}^{p-1}(y) k(x,y) \,\mathrm{d}y \right),
\end{split}
\end{equation}
and for $\beta < 0$, $\beta \neq -(p-1)$,
\begin{equation} \label{eq:Caccio-2b}
\begin{split}
I_2
&\leq C \frac{|\gamma|^p}{|\beta|} \int_{B_R} \int_{\mathbb{R}^n \setminus B_R} (\bar{u}(x)-\bar{u}(y))^{p-1} (\bar{u}^{\beta}(x) - \bar{m}^{\beta}) \eta^p(x) {\bf 1}_{\lbrace \bar{u}(x) \geq \bar{u}(y) \rbrace} k(x, y) \,\mathrm{d}y \,\mathrm{d}x \\
&\leq C \frac{|\gamma|^p}{|\beta|} \int_{B_R} \int_{\mathbb{R}^n \setminus B_R} (\bar{u}(x)+(\bar{u}(y))_-)^{p-1} \bar{u}^{\beta}(x) \eta^p(x) k(x, y) \,\mathrm{d}y \,\mathrm{d}x \\
&\leq C \frac{|\gamma|^p}{|\beta|} \int_{B_R} \int_{\mathbb{R}^n \setminus B_R} \bar{u}^{\gamma}(x) |\eta(x)-\eta(y)|^p k(x, y) \,\mathrm{d}y \,\mathrm{d}x \\
&\quad + C \frac{|\gamma|^p}{|\beta|} \int_{B_R} \bar{u}^{\beta}(x) \,\mathrm{d}x \left( \sup_{x \in \mathrm{supp} \,\eta} \int_{\mathbb{R}^n \setminus B_R} (\bar{u}(y))_-^{p-1} k(x,y) \,\mathrm{d}y \right),
\end{split}
\end{equation}
where $C = C(p) > 0$. If $x \in \mathrm{supp}\,\eta \subset B_{(R+r)/2}(x_0)$ and $y \in \mathbb{R}^n \setminus B_R(x_0)$, then $|x-y| \geq \frac{R-r}{2R}|y-x_0|$. Thus, we have, for $\beta > 0$,
\begin{equation} \label{eq:tail-sub}
\sup_{x \in \mathrm{supp}\,\eta} \int_{\mathbb{R}^n \setminus B_R} \bar{u}^{p-1}(y) k(x, y) \,\mathrm{d}y \leq \frac{\Lambda}{R^{sp}} \left( \frac{2R}{R-r} \right)^{n+sp} \mathrm{Tail}^{p-1}(\bar{u}; x_0, R),
\end{equation}
and for $\beta < 0$, $\beta \neq -(p-1)$,
\begin{equation} \label{eq:tail-super}
\sup_{x \in \mathrm{supp}\,\eta} \int_{\mathbb{R}^n \setminus B_R} (\bar{u}(y))_-^{p-1} k(x, y) \,\mathrm{d}y \leq \frac{\Lambda}{R^{sp}} \left( \frac{2R}{R-r} \right)^{n+sp} \mathrm{Tail}^{p-1}(\bar{u}_-; x_0, R).
\end{equation}
By combining \eqref{eq:Caccio-12}, \eqref{eq:Caccio-2a}, and \eqref{eq:tail-sub}, and using the estimate
\begin{equation} \label{eq:cut-off-est}
\begin{split}
&\int_{B_R} \bar{u}^{\gamma}(x) \int_{\mathbb{R}^n} |\eta(x)-\eta(y)|^p k(x, y) \,\mathrm{d}y \,\mathrm{d}x \\
&\leq\Lambda \int_{B_R} \bar{u}^{\gamma}(x) \left( \left( \frac{C}{R-r} \right)^p \int_{B_R(x)} \frac{|x-y|^{p}}{|x-y|^{n+sp}} \,\mathrm{d}y + \int_{\mathbb{R}^n \setminus B_R(x)} \frac{1}{|x-y|^{n+sp}} \,\mathrm{d}y \right) \,\mathrm{d}x \\
&\leq CR^{n-sp} \left( \frac{R}{R-r} \right)^p \Phi_{\gamma;\, x_0, R}^{\gamma}(\bar{u}),
\end{split}
\end{equation}
we conclude \Cref{lem:Caccio-subsoln}. Moreover, \eqref{eq:Caccio-12}, \eqref{eq:Caccio-2b}, \eqref{eq:tail-super}, and \eqref{eq:cut-off-est} prove \Cref{lem:Caccio-supersoln}.

If $\gamma = 0$, then \Cref{lem:alg-ineq-log} with $a=\bar{u}(x)-d$, $b=\bar{u}(y)-d$, $\eta_1=\eta(x)$, $\eta_2=\eta(y)$, and $l = m$ yields
\begin{equation*}
\begin{split}
&\int_{B_R} \int_{B_R} |\log \bar{u}(x) - \log \bar{u}(y)|^{p} \min \lbrace \eta(x), \eta(y) \rbrace^p k(x, y) \,\mathrm{d}y \,\mathrm{d}x \\
&\leq C \int_{B_R} \int_{B_R} |\eta(x)-\eta(y)|^p k(x, y) \,\mathrm{d}y \,\mathrm{d}x \\
&\quad + C \int_{B_R} \int_{\mathbb{R}^n \setminus B_R} |\bar{u}(x)-\bar{u}(y)|^{p-2} (\bar{u}(x)-\bar{u}(y)) \varphi(x) k(x, y) \,\mathrm{d}y \,\mathrm{d}x.
\end{split}
\end{equation*}
Then, similar arguments as above finish the proof of \Cref{lem:Caccio-log}.
\end{proof}


\subsection{Local boundedness up to the boundary} \label{sec:local-boundedness}


Having Caccioppoli-type estimates at hand, we prove the local boundedness and the weak Harnack inequality. We first study the local boundedness up to the boundary for weak subsolutions and then provide the boundary weak Harnack inequality for weak supersolutions. We refer the reader to \cite{BP16,DCKP16} for the interior estimates.

\begin{lemma} \label{lem:iteration-subsoln}
Let $p \in (1, n/s]$, $\gamma = \beta+p-1> p-1$, and
\begin{equation} \label{eq:chi}
\chi = 
\begin{cases}
n/(n-sp) &\text{if } p < n/s, \\
\text{any number larger than }1 &\text{if } p = n/s.
\end{cases}
\end{equation}
If $u \in V^{s, p}(\Omega|\mathbb{R}^n)$ is a bounded weak subsolution of $\mathcal{L}u=0$ in $\Omega$, then for any $x_0 \in \mathbb{R}^n$, $0<r<R$, and $d > 0$,
\begin{equation*}
\begin{split}
\Phi_{\gamma \chi;\, x_0, r}(u_M^++d)
&\leq (C (1+|\gamma|)^p)^{\frac{1}{\gamma}} \left( \frac{R}{r} \right)^{\frac{n}{\gamma}} \left( \frac{R}{R-r} \right)^{\frac{n+p}{\gamma}} \\
&\quad \times \left( 1 + \frac{\mathrm{Tail}^{p-1}(u_M^++d; x_0, R)}{d^{p-1}} \right)^{\frac{1}{\gamma}} \Phi_{\gamma;\, x_0, R}(u_M^++d),
\end{split}
\end{equation*}
where $\Phi$ and $u_M^+$ are given as in \eqref{eq:modular} and \eqref{eq:M}, respectively. The constant $C$ depends only on $n$, $s$, $p$, $\Lambda$, $\beta$ and $\chi$, and is bounded when $\beta$ is bounded away from zero.
\end{lemma}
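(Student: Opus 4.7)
The plan is to combine the Caccioppoli-type estimate of \Cref{lem:Caccio-subsoln} with the fractional Sobolev inequality, producing the Moser jump from integrability $\gamma$ to $\gamma\chi$. Write $\bar{u} := u_M^+ + d$, so that $\bar{u} \geq d$ pointwise on $\mathbb{R}^n$. Since $\beta = \gamma - (p-1) > 0$, we have $\bar{u}^\beta = \bar{u}^\gamma \bar{u}^{-(p-1)} \leq d^{-(p-1)} \bar{u}^\gamma$, and hence
\begin{equation*}
\Phi_{\beta;\,x_0,R}^{\beta}(\bar{u}) \leq d^{-(p-1)}\,\Phi_{\gamma;\,x_0,R}^{\gamma}(\bar{u}).
\end{equation*}
Because $\beta$ is bounded away from zero, both $(\gamma/\beta)^p$ and $\gamma^p/\beta$ are controlled by $C(1+|\gamma|)^p$. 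Substituting these bounds into \Cref{lem:Caccio-subsoln} yields
\begin{equation*}
\bigl[\bar{u}^{\gamma/p}\bigr]_{W^{s,p}(B_r(x_0))}^{p} \leq C(1+|\gamma|)^p R^{n-sp}\left(\frac{R}{R-r}\right)^{n+sp}\left(1 + \frac{\mathrm{Tail}^{p-1}(\bar{u};x_0,R)}{d^{p-1}}\right)\Phi_{\gamma;\,x_0,R}^{\gamma}(\bar{u}).
\end{equation*}

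Next I would apply the fractional Sobolev inequality on $B_r(x_0)$ to the function $\bar{u}^{\gamma/p}$, which belongs to $W^{s,p}(B_r(x_0))$ since $u$ is bounded and $u \in W^{s,p}_{\mathrm{loc}}(\Omega)$. For $p<n/s$ one has $p^\ast/p = \chi$; for $p=n/s$ one uses the embedding $W^{s,p}(B_r(x_0)) \hookrightarrow L^{p\chi}(B_r(x_0))$ for arbitrary $\chi>1$. After normalising by $|B_r(x_0)|^{1/\chi}$, the inequality reads
\begin{equation*}
\Phi_{\gamma\chi;\,x_0,r}^{\gamma}(\bar{u}) \leq C\,r^{-n/\chi}\bigl[\bar{u}^{\gamma/p}\bigr]_{W^{s,p}(B_r(x_0))}^{p} + C\,r^{-n/\chi - sp}\!\int_{B_r(x_0)} \bar{u}^{\gamma}\,\mathrm{d}x.
\end{equation*}

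To finish I would substitute the Caccioppoli bound into the first term and use $\int_{B_r(x_0)} \bar{u}^\gamma \leq C R^n \Phi_{\gamma;\,x_0,R}^\gamma(\bar{u})$ in the second. Using $n/\chi = n-sp$ in the subcritical case (in the critical case one chooses $\chi$ so that $n/\chi$ is as close as desired to $n-sp$, absorbing the discrepancy into $C$), the two prefactors simplify to $(R/r)^{n-sp}$ and $(R/r)^n$ respectively, both bounded by $(R/r)^n$. Since $s<1$ implies $sp<p$, the factor $(R/(R-r))^{n+sp}$ is bounded by $(R/(R-r))^{n+p}$, and taking the $\gamma$-th root (which is legitimate because $\gamma > p-1 > 0$) yields the claimed inequality. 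The only delicate step is the tail bookkeeping: one must exploit the lower bound $\bar{u} \geq d$ at exactly the right moment to absorb $\Phi_\beta^\beta$ into $\Phi_\gamma^\gamma$ and thereby convert $\mathrm{Tail}^{p-1}$ into the dimensionless ratio $\mathrm{Tail}^{p-1}/d^{p-1}$, which is the only mechanism by which the tail enters the conclusion in the claimed form.
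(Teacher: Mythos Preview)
Your approach is essentially the paper's: combine \Cref{lem:Caccio-subsoln} with the fractional Sobolev inequality, using $\bar u^\beta\le d^{1-p}\bar u^\gamma$ to absorb the tail term. The subcritical case $p<n/s$ is handled correctly and matches the paper almost verbatim.

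The critical case $p=n/s$, however, is not quite right as written. Your displayed Sobolev inequality
\[
\Phi_{\gamma\chi;\,x_0,r}^{\gamma}(\bar{u}) \leq C\,r^{-n/\chi}\bigl[\bar{u}^{\gamma/p}\bigr]_{W^{s,p}(B_r)}^{p} + C\,r^{-n/\chi - sp}\int_{B_r} \bar{u}^{\gamma}\,\mathrm{d}x
\]
is dimensionally correct only when $n/\chi=n-sp$, i.e.\ precisely in the subcritical case. When $sp=n$, scaling the critical embedding $W^{s,p}(B_1)\hookrightarrow L^{p\chi}(B_1)$ gives instead
\[
\Phi_{\gamma\chi;\,x_0,r}^{\gamma}(\bar{u}) \leq C\bigl[\bar{u}^{\gamma/p}\bigr]_{W^{s,p}(B_r)}^{p} + C\,r^{-n}\int_{B_r} \bar{u}^{\gamma}\,\mathrm{d}x,
\]
with no $r^{-n/\chi}$ in front of the seminorm. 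Your remark about ``choosing $\chi$ so that $n/\chi$ is close to $n-sp$'' does not help, since $\chi$ is prescribed in the statement, not free. With the correct scaling the argument still closes (the seminorm term contributes $(R/r)^0\le (R/r)^n$), but the justification you gave does not. The paper avoids this altogether by picking $q\in[1,p)$ and $\sigma\in(0,s)$ with $\chi=q^*_\sigma/p$, applying the \emph{subcritical} Sobolev inequality for the pair $(\sigma,q)$, and then using the embedding $W^{s,p}(B_r)\hookrightarrow W^{\sigma,q}(B_r)$ from \cite[Lemma~4.6]{Coz17} together with H\"older's inequality. This recovers the subcritical scaling structure and gives the stated estimate with the correct $r$-dependence.
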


\begin{proof}
By using \Cref{lem:Caccio-subsoln} and $(u_M^++d)^{\beta} \leq d^{1-p} (u_M^++d)^{\gamma}$, we have
\begin{equation*}
\begin{split}
&\int_{B_{r}(x_0)} \int_{B_{r}(x_0)} \frac{|(u_M^+(x)+d)^{\gamma/p} - (u_M^+(y)+d)^{\gamma/p}|^p}{|x-y|^{n+sp}} \,\mathrm{d}y \,\mathrm{d}x \\
&\leq C(1+|\gamma|)^p R^{n-sp} \left( \frac{R}{R-r} \right)^{n+p} \left( 1 + \frac{\mathrm{Tail}^{p-1}(u_M^++d; x_0, R)}{d^{p-1}} \right) \Phi_{\gamma;\, x_0, R}^{\gamma}(u_M^++d),
\end{split}
\end{equation*}
where $C = C(n, s, p, \Lambda, \beta) > 0$ is a constant, which is bounded when $\beta$ is bounded away from zero. If $p < n/s$, then by using the fractional Sobolev inequality, we obtain
\begin{equation*}
\begin{split}
&\Phi_{\gamma \chi;\, x_0, r}^{\gamma}(u_M^++d) \\
&= \frac{C}{r^{n-sp}} \|(u_M^++d)^{\gamma/p}\|_{L^{p^{\ast}}(B_r(x_0))}^{p} \\
&\leq \frac{C}{r^{n-sp}} \left( [(u_M^++d)^{\gamma/p}]_{W^{s, p}(B_r(x_0))}^p + r^{-sp} \|(u_M^++d)^{\gamma/p}\|_{L^p(B_r(x_0))}^p \right) \\
&\leq C (1+|\gamma|)^p \left( \frac{R}{r} \right)^{n} \left( \frac{R}{R-r} \right)^{n+p} \left( 1 + \frac{\mathrm{Tail}^{p-1}(u_M^++d; x_0, R)}{d^{p-1}} \right) \Phi_{\gamma;\, x_0, R}^{\gamma}(u_M^++d),
\end{split}
\end{equation*}
as desired. If $p = n/s$, let $q \in [1, p)$ and $\sigma \in (0,s)$ be such that $\chi = q^{\ast}_{\sigma} / p > 1$, where $q^{\ast}_{\sigma} = nq/(n-\sigma q)$. Then, the desired inequality can be proved as above by applying the fractional Sobolev inequality with $q < n/\sigma$, and then using \cite[Lemma 4.6]{Coz17} and H\"older's inequality.
\end{proof}

One can iterate \Cref{lem:iteration-subsoln} to obtain a local boundedness result for bounded weak subsolutions up to the boundary. Then, \Cref{thm:loc-bdd} follows by approximating a weak subsolution by a sequence of bounded weak subsolutions. Since the proof is the same as the one for interior estimates, we refer the reader to \cite[Theorem 3.8]{BP16} for the proof.

\begin{theorem} \label{thm:loc-bdd}
Let $p \in (1, n/s]$. If $u \in V^{s, p}(\Omega|\mathbb{R}^n)$ is a weak subsolution of $\mathcal{L}u=0$ in $\Omega$, then for any $x_0 \in \mathbb{R}^n$, $R>0$, and $\delta \in (0,1)$,
\begin{equation*}
\wsup_{B_{R/2}(x_0)} u_M^+ \leq \delta \, \mathrm{Tail}(u_M^+; x_0, R/2) + C \delta^{-\frac{(p-1)n}{sp^2}} \left( \fint_{B_{R}(x_0)} u_M^+(x)^p \,\mathrm{d}x \right)^{1/p},
\end{equation*}
where $u_M^+$ is given as in \eqref{eq:M} and $C = C(n, s, p, \Lambda) > 0$.
\end{theorem}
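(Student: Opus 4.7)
The plan is to carry out Moser's iteration of \Cref{lem:iteration-subsoln} and then optimize the free parameter $d>0$ in the iterated inequality. First, I reduce to the case of a bounded weak subsolution: for unbounded $u$, I pass to the truncation $u^{(k)}:=\min\{u,k\}$, which remains a weak subsolution of $\mathcal{L}u=0$ in $\Omega$ (truncation from above preserves the subsolution property for operators of this class, see e.g.\ \cite{KKP17,DCKP14}), establish the estimate for each $u^{(k)}$, and send $k\to\infty$ by monotone convergence for both the $L^p$ mean and the nonlocal tail.

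For a bounded subsolution and fixed $d>0$, I set $r_j:=R/2+2^{-j-1}R$ (so $r_0=R$ and $r_j\downarrow R/2$) and $\gamma_j:=p\chi^j$ with $\chi>1$ chosen as in \eqref{eq:chi}. Applying \Cref{lem:iteration-subsoln} with the triple $(r_{j+1},r_j,\gamma_j)$ gives, for each $j\ge 0$,
\begin{align*}
\Phi_{\gamma_{j+1};\,x_0,r_{j+1}}(u_M^++d) \le A_j\left(1+\frac{T_j^{p-1}}{d^{p-1}}\right)^{1/\gamma_j}\Phi_{\gamma_j;\,x_0,r_j}(u_M^++d),
\end{align*}
where $A_j:=[C(1+\gamma_j)^p]^{1/\gamma_j}\,2^{(n+p)(j+2)/\gamma_j}$ and $T_j:=\mathrm{Tail}(u_M^++d;\,x_0,r_j)$. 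Since $\gamma_j$ grows geometrically, $\prod_{j\ge 0}A_j\le C$, and for $r_j\in[R/2,R]$ a direct comparison of tail integrals gives $T_j^{p-1}\le 2^{sp}\,\mathrm{Tail}(u_M^+;\,x_0,R/2)^{p-1}+Cd^{p-1}$ uniformly in $j\in\mathbb{N}$, so the tail factor decouples from the iteration index. Writing $T:=\mathrm{Tail}(u_M^+;\,x_0,R/2)$ and using the identity $\sum_{j\ge 0}\gamma_j^{-1}=n/(sp^2)$ (in the borderline case $p=n/s$ one picks $\chi$ close to the critical value and absorbs the discrepancy into $C$), I let $j\to\infty$ with $\Phi_{\gamma_j;\,x_0,r_j}(\cdot)\to\wsup_{B_{R/2}}(\cdot)$ to obtain
\begin{align*}
\wsup_{B_{R/2}(x_0)}(u_M^++d) \le C\left(1+\frac{T^{p-1}}{d^{p-1}}\right)^{n/(sp^2)}\left(\fint_{B_R(x_0)}(u_M^++d)^p\,\mathrm{d}x\right)^{1/p}.
\end{align*}

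Finally, I optimize in $d$: choosing $d:=\delta T$ with $\delta\in(0,1)$, using $(1+\delta^{-(p-1)})^{n/(sp^2)}\le C\delta^{-(p-1)n/(sp^2)}$, and splitting $(u_M^++d)^p\le 2^{p-1}(u_M^{+\,p}+d^p)$ separates the right-hand side into the $L^p$-mean contribution and a $d$-contribution, the latter recombining into a $\delta T$ term after a suitable universal rescaling of $\delta$, yielding the estimate of \Cref{thm:loc-bdd}. The main obstacle is the iteration bookkeeping, specifically verifying that the intermediate tails $T_j$ can be dominated uniformly by $\mathrm{Tail}(u_M^+;\,x_0,R/2)+Cd$ so that the tail factors decouple and their product collapses into the single clean power $(1+T^{p-1}/d^{p-1})^{n/(sp^2)}$; once that factorization is secured, the remainder is a standard algebraic optimization producing the exponent $(p-1)n/(sp^2)$.
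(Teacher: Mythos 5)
Your overall strategy---iterating \Cref{lem:iteration-subsoln} over a shrinking chain of radii and then choosing $d$ in terms of $\delta$ and the tail---is exactly the route the paper indicates (it defers the details to \cite{BP16}), and your bookkeeping of the radii $r_j$, the exponents $\gamma_j=p\chi^j$, the convergence of $\prod_j A_j$, and the uniform domination of the intermediate tails $T_j$ is sound. However, there are two genuine gaps. First, the reduction to bounded subsolutions is based on a false claim: truncation from above does \emph{not} preserve the subsolution property. The results you allude to in \cite{KKP17} say that the pointwise minimum of two weak \emph{supersolutions} is a weak supersolution (equivalently, the maximum of two subsolutions is a subsolution); for a subsolution $u$ the function $\min\{u,k\}$ is in general a strict supersolution and not a subsolution (already for $(-\Delta)^s$ in one dimension, $u(x)=x$ is $s$-harmonic while $(-\Delta)^s\min\{x,0\}>0$ everywhere). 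The standard repair is to perform the truncation inside the test function (a bounded Lipschitz modification of $t\mapsto t^{\beta}$) and pass to the limit by monotone convergence, noting that the finiteness of $\Phi_{\gamma_j}$ needed at each stage is supplied inductively by the previous stage starting from $\gamma_0=p$.

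The second, more serious, gap is the final optimization. From the iterated inequality
\begin{equation*}
\wsup_{B_{R/2}(x_0)}(u_M^++d)\le C\Bigl(1+\frac{T^{p-1}}{d^{p-1}}\Bigr)^{\frac{n}{sp^2}}\Bigl(\fint_{B_R(x_0)}(u_M^++d)^p\,\mathrm{d}x\Bigr)^{1/p},
\end{equation*}
the choice $d=\delta T$ produces, from the additive $d$ on the right-hand side, the term $C\delta^{-\frac{(p-1)n}{sp^2}}\cdot\delta T=C\delta^{1-\frac{(p-1)n}{sp^2}}T$. Whenever $(p-1)n\ge sp^2$ (for instance $p=2$ and $n>4s$) this quantity does not tend to zero as $\delta\to0$, so no ``universal rescaling of $\delta$'' can convert it into $\delta T$; and even when $(p-1)n<sp^2$, relabelling $\delta'=C\delta^{1-\frac{(p-1)n}{sp^2}}$ changes the exponent on the $L^p$ term to $\frac{(p-1)n}{sp^2-(p-1)n}$, not the claimed $\frac{(p-1)n}{sp^2}$. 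The obstruction is structural: the multiplicative tail factor necessarily multiplies the additive shift $d$, so no choice of $d=d(\delta,T)$ in this inequality recovers the stated form. The known derivations avoid this either by a De Giorgi iteration in the spirit of \cite{DCKP16}, where the level $k$ enters the conclusion linearly as $\wsup u\le k$ and the two requirements $k\ge\delta\,\mathrm{Tail}$ and $k\ge C\delta^{-\frac{(p-1)n}{sp^2}}(\fint u_M^{+\,p})^{1/p}$ decouple, or by an additional absorption argument; your proposal contains neither. (A smaller issue of the same flavor: for $p=n/s$ one has $\sum_j\gamma_j^{-1}=\frac{\chi}{p(\chi-1)}>\frac{n}{sp^2}$ for every admissible $\chi>1$, and since $\delta<1$ the resulting larger exponent is a genuinely weaker bound that cannot be ``absorbed into $C$.'')
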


See \cite{KKP16,Pal18} for similar results on local boundedness up to the boundary, which are based on De Giorgi's iteration.


\subsection{Weak Harnack inequality up to the boundary} \label{sec:WHI}


In this section, we prove the boundary weak Harnack inequality for weak supersolutions. Let us first recall the interior estimates from \cite{DCKP14}.

\begin{theorem} \cite{DCKP14} \label{thm:interior-WHI}
Let $p \in (1, n/s)$ and $0 < t < (p-1)n/(n-sp)$. Let $u \in V^{s, p}(\Omega|\mathbb{R}^n)$ be a weak supersolution of $\mathcal{L}u=0$ in $\Omega$ such that $u \geq 0$ in $B_R(x_0) \subset \Omega$. Then, it holds that
\begin{equation} \label{eq:interior-WHI}
\left( \fint_{B_{R/2}(x_0)} u^t(x) \,\mathrm{d}x \right)^{1/t} \leq C \essinf_{B_{R/4}(x_0)} u + C \, \mathrm{Tail}(u_-; x_0, R),
\end{equation}
where $C > 0$ is a constant depending only on $n$, $s$, $p$, $t$, and $\Lambda$.
\end{theorem}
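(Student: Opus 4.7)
The plan is to establish the inequality by Moser iteration using the Caccioppoli-type bounds \Cref{lem:Caccio-supersoln} and \Cref{lem:Caccio-log}. In the interior setting $B_R(x_0) \subset \Omega$ the truncation $u_m^-$ reduces to $u$ itself (since $\winf$ over the empty set is $+\infty$), so these lemmas apply to $u$ directly. Throughout, I would work with $\bar{u} := u + d$ for the single constant $d \asymp \mathrm{Tail}(u_-; x_0, R)$, chosen so that the tail term $\mathrm{Tail}^{p-1}(\bar{u}_-; x_0, \rho)$ appearing on every shrinking ball $B_\rho(x_0)$ with $\rho \leq R$ is controlled by $d^{p-1}$ (since $\bar{u}\geq 0$ on $B_R$, the tail sees only exterior data, which is strictly smaller than on $B_R$) and can be absorbed into the bulk modular.

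The first step is a downward iteration on negative exponents. Combining \Cref{lem:Caccio-supersoln} (with $\gamma<0$, hence $\beta<0$) with the fractional Sobolev inequality yields a reverse Hölder step
\[
\Phi_{\gamma\chi;\,x_0,r}(\bar{u}) \leq \bigl(C(1+|\gamma|)^p\bigr)^{1/|\gamma|}\!\left(\frac{R}{R-r}\right)^{C/|\gamma|}\!\Phi_{\gamma;\,x_0,R}(\bar{u}),
\]
with $\chi = n/(n-sp)>1$. Iterating geometrically along $\gamma_k = -\gamma_0\chi^k$ and radii $r_k \searrow R/4$, convergence of the product of constants (the dependence on $|\gamma|$ is polynomial) and passing $k\to\infty$ yield
\[
\essinf_{B_{R/4}(x_0)} \bar{u} \geq c\,\Phi_{-\gamma_0;\,x_0,R/2}(\bar{u})
\]
for any fixed $\gamma_0>0$. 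The second step is an upward iteration on $0<\gamma<p-1$: the same \Cref{lem:Caccio-supersoln}-plus-Sobolev scheme, repeated finitely many times and followed by Hölder's inequality in the final step, bounds $\Phi_{t;\,x_0,R/2}(\bar{u})$ by $\Phi_{\gamma_1;\,x_0,3R/4}(\bar{u})$ for the given $t\in(0,(p-1)\chi)$ and some small fixed $\gamma_1>0$.

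The two one-sided iterations are joined through the logarithmic estimate \Cref{lem:Caccio-log}, which together with \Cref{poincare1} gives a fractional BMO-type bound on $\log\bar{u}$ over $B_{R/2}(x_0)$. A John--Nirenberg / Bombieri--Giusti argument adapted to the fractional framework then produces
\[
\Phi_{\gamma_1;\,x_0,R/2}(\bar{u}) \leq C\,\Phi_{-\gamma_0;\,x_0,R/2}(\bar{u}),
\]
and concatenating the three estimates with the choice $d \asymp \mathrm{Tail}(u_-; x_0, R)$ delivers \eqref{eq:interior-WHI}. The hard part will be the bookkeeping of nonlocal tails throughout the Moser scheme: every Caccioppoli application introduces a term proportional to $\mathrm{Tail}^{p-1}(\bar{u}_-;\cdot)$ on a new inner ball, and the whole scheme closes only because the single choice of $d$ dominates all of them uniformly (a consequence of $\bar{u}_- \equiv 0$ on $B_R$, which forces the inner-ball tails to be bounded by a constant multiple of the one on $B_R$). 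The John--Nirenberg step is a secondary delicate point, not automatic in the fractional setting, and depends on extracting the quantitative form of \Cref{lem:Caccio-log} rather than its qualitative content.
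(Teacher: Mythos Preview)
The paper does not give its own proof of \Cref{thm:interior-WHI}; it cites \cite{DCKP14}. As the paragraph following the statement explains, the argument in \cite{DCKP14} differs from yours: there the small-$t$ range is obtained via the Krylov--Safonov covering technique of \cite{DT84}, and only the upward passage to all $t<(p-1)n/(n-sp)$ uses Moser iteration with positive exponents. Your scheme---negative-exponent Moser iteration, the logarithmic Caccioppoli estimate plus John--Nirenberg to cross zero, then positive-exponent iteration---is exactly the alternative the paper mentions in that same paragraph and then carries out in full for the boundary version \Cref{thm:bdry-WHI} (via \Cref{lem:iteration-supersoln}, \Cref{lem:iteration-log}, and the proof of \Cref{thm:bdry-WHI}). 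Your observation that $B_R(x_0)\setminus\Omega=\emptyset$ forces $m=+\infty$ and hence $u_m^-=u$ is precisely what specializes those lemmas to the interior case, so the plan is sound and requires nothing beyond what the paper already contains. One small slip: for $\gamma<0$ the reverse H\"older step coming from \Cref{lem:iteration-supersoln} reads $\Phi_{\gamma;\,x_0,R}\leq C\,\Phi_{\gamma\chi;\,x_0,r}$, not the direction you display; your stated conclusion $\essinf_{B_{R/4}}\bar u \geq c\,\Phi_{-\gamma_0}(\bar u)$ is nonetheless the correct one.
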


The authors of \cite{DCKP14} extended the techniques of \cite{DT84}, which use the Krylov--Safonov covering lemma, to nonlinear nonlocal operators to obtain the estimates \eqref{eq:interior-WHI} for small $t > 0$, and then used Moser's iteration with positive exponents to cover all $t < (p-1)n/(n-sp)$. The first part of their method can be replaced by Moser's iteration with negative exponents and John--Nirenberg-type inequality. Note that for interior estimates, Moser's iteration has been successfully applied to linear operators \cite{Kas09} and extended to nonlinear operators \cite{CK22}. The operators considered in \cite{CK22} have a different structure than the operator $\mathcal{L}$, but the underlying idea is the same.

The boundary weak Harnack inequality for weak supersolutions, the main result of this section, is stated as follows. We will prove \Cref{thm:bdry-WHI} by using Moser's iteration in the remaining part of this section. Note that we cover the case $p=n/s$, in which case we have $(p-1)n/(n-sp)=\infty$.

\begin{theorem} \label{thm:bdry-WHI}
Let $p \in (1, n/s]$ and $0 < t < (p-1)n/(n-sp)$. Let $u \in V^{s, p}(\Omega|\mathbb{R}^n)$ be a weak supersolution of $\mathcal{L}u = 0$ in $\Omega$ such that $u \geq 0$ in $B_{R}(x_0)$ for some ball $B_{R}(x_0) \subset \mathbb{R}^n$. Then, it holds that
\begin{equation*}
\left( \fint_{B_{R/2}(x_0)} u_m^-(x)^t \,\mathrm{d}x \right)^{1/t} \leq C \winf_{B_{R/4}(x_0)} u_m^- + C \, \mathrm{Tail}((u_m^-)_-; x_0, R),
\end{equation*}
where $u_m^-$ is defined as in \eqref{eq:m} and $C > 0$ is a constant depending only on $n$, $s$, $p$, $t$, and $\Lambda$.
\end{theorem}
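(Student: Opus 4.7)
My plan is to prove \Cref{thm:bdry-WHI} by a three-stage Moser iteration, using the boundary Caccioppoli estimates \Cref{lem:Caccio-supersoln} and \Cref{lem:Caccio-log}, which already incorporate the truncation $u_m^-$. I would set $\tilde{u}:=u_m^-+d$ with
\begin{equation*}
d := \mathrm{Tail}((u_m^-)_-;x_0,R)+\varepsilon,\qquad \varepsilon>0,
\end{equation*}
so that $\tilde{u}\geq d>0$ pointwise and the dimensionless tail $\mathrm{Tail}^{p-1}((u_m^-+d)_-;x_0,R)/d^{p-1}\leq 1$ uniformly; the parameter $\varepsilon$ will be sent to $0$ at the end. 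For \emph{Stage 1} (negative exponents), I apply \Cref{lem:Caccio-supersoln} with $\gamma=\beta+p-1<0$. The pointwise bound $\tilde{u}^{\beta}\leq d^{1-p}\tilde{u}^{\gamma}$ combined with the tail control turns the Caccioppoli estimate into a reverse Hölder-type inequality, and the fractional Sobolev embedding (or, for $p=n/s$, its $\sigma<s$, $q<p$ variant used in the proof of \Cref{lem:iteration-subsoln}) yields
\begin{equation*}
\Phi_{\gamma\chi;\,x_0,r}(\tilde{u})\;\geq\;\bigl(C(1+|\gamma|)^p\bigr)^{-1/|\gamma|}\left(\frac{R-r}{R}\right)^{(n+p)/|\gamma|}\Phi_{\gamma;\,x_0,R}(\tilde{u}),
\end{equation*}
with $\chi$ as in \eqref{eq:chi}. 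Iterating along $r_k\downarrow R/4$ with exponents $\gamma_k=\gamma_0\chi^k\to-\infty$ and summing the convergent series of logarithms of constants produces, for some $q_0>0$,
\begin{equation*}
\winf_{B_{R/4}(x_0)}\tilde{u}\;\geq\;C^{-1}\left(\fint_{B_{R/2}(x_0)}\tilde{u}^{-q_0}\,\mathrm{d}x\right)^{-1/q_0}.
\end{equation*}

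For \emph{Stage 2} (the logarithmic estimate and John--Nirenberg), \Cref{lem:Caccio-log} together with the choice of $d$ gives a scale-invariant $L^p$ oscillation bound on $\log\tilde{u}$; combined with \Cref{poincare1} this places $\log\tilde{u}$ in a fractional BMO-type class on $B_{R/2}(x_0)$. A John--Nirenberg-type argument, obtained in the fractional setting by iterating this oscillation bound on a nested family of balls in the spirit of \cite{Kas09,CK22}, then produces a (possibly smaller) $q_0>0$ with
\begin{equation*}
\Phi_{q_0;\,x_0,R/2}(\tilde{u})\;\leq\;C\,\Phi_{-q_0;\,x_0,R/2}(\tilde{u}),
\end{equation*}
and chaining with Stage 1 yields $\Phi_{q_0;\,x_0,R/2}(\tilde{u})\leq C\winf_{B_{R/4}(x_0)}\tilde{u}$. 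For \emph{Stage 3} (positive exponents), I apply \Cref{lem:Caccio-supersoln} with $0<\gamma<p-1$ to obtain the analogous reverse Hölder estimate $\Phi_{\gamma\chi;\,x_0,r}(\tilde{u})\leq C(\gamma)\Phi_{\gamma;\,x_0,R}(\tilde{u})$, and iterate finitely many times from exponent $q_0$ up to any prescribed $t<(p-1)\chi=(p-1)n/(n-sp)$, concluding with a single Sobolev jump from some $\gamma^{\ast}<p-1$ satisfying $\gamma^{\ast}\chi>t$, so that all intermediate exponents stay uniformly bounded away from $p-1$ and the per-step constants remain finite in terms of $n,s,p,t,\Lambda$. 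Chaining the three stages and sending $\varepsilon\to0$ (noting that $\tilde{u}\geq u_m^-$ and $\winf \tilde{u}=\winf u_m^-+d$) produces the desired inequality.

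The main obstacle is the careful bookkeeping of the nonlocal tail: the $d^{1-p}$ blow-up arising from the bound $\tilde{u}^{\beta}\leq d^{1-p}\tilde{u}^{\gamma}$ is compensated precisely by the choice $d\simeq\mathrm{Tail}((u_m^-)_-;x_0,R)$, but this balance must be preserved uniformly as $\gamma$ varies across all three stages, which requires quantitative control of the $|\gamma|/|\beta|$ factors in \Cref{lem:Caccio-supersoln} so that the per-step constants sum to something finite in logarithmic scale. A secondary delicacy is establishing a fractional John--Nirenberg inequality valid throughout the full range $p\in(1,n/s]$, covering the borderline case $p=n/s$ where the Sobolev exponent degenerates and the auxiliary exponent $\chi$ must be chosen as in \eqref{eq:chi}.
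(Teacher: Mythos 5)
Your proposal is correct and follows essentially the same route as the paper: the choice $d\simeq\mathrm{Tail}((u_m^-)_-;x_0,R)$ to neutralize the tail, the crossover $\Phi_{\bar p}\lesssim\Phi_{-\bar p}$ via the logarithmic Caccioppoli estimate, the fractional Poincar\'e inequality and John--Nirenberg (\Cref{lem:iteration-log}), and the two Moser iterations with negative and positive exponents built on \Cref{lem:Caccio-supersoln} (\Cref{lem:iteration-supersoln}). The only cosmetic differences are your $\varepsilon$-regularization of $d$ and your framing of Stage 2 as a ``fractional'' John--Nirenberg, whereas the paper simply verifies that $\log(u_m^-+d)$ lies in classical BMO and invokes the standard embedding.
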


We begin with the following lemma. We omit the proof of \Cref{lem:iteration-supersoln} since one can follow the proof of \Cref{lem:iteration-subsoln} and use \Cref{lem:Caccio-supersoln} instead of \Cref{lem:Caccio-subsoln}.

\begin{lemma} \label{lem:iteration-supersoln}
Let $p \in (1, n/s]$, $\gamma = \beta+p-1< p-1$, $\gamma \neq 0$, and $\chi$ be as in \eqref{eq:chi}. If $u \in V^{s, p}(\Omega|\mathbb{R}^n)$ is a weak supersolution of $\mathcal{L}u=0$ in $\Omega$ such that $u \geq 0$ in $B_R(x_0) \subset \mathbb{R}^n$, then for any and $0<r<R$ and $d > 0$,
\begin{equation*}
\begin{split}
\Phi_{\gamma \chi;\, x_0, r}(u_m^-+d)
&\leq (C(1+|\gamma|)^p)^{\frac{1}{|\gamma|}} \left( \frac{R}{r} \right)^{\frac{n}{|\gamma|}} \left( \frac{R}{R-r} \right)^{\frac{n+p}{|\gamma|}} \\
&\quad \times \left( 1 + \frac{\mathrm{Tail}^{p-1}((u_m^-+d)_-; x_0, R)}{d^{p-1}} \right)^{\frac{1}{|\gamma|}} \Phi_{\gamma;\, x_0, R}(u_m^-+d)
\end{split}
\end{equation*}
when $0<\gamma< p-1$, and
\begin{equation*}
\begin{split}
\Phi_{\gamma;\, x_0, R}(u_m^-+d)
&\leq (C(1+|\gamma|)^p)^{\frac{1}{|\gamma|}} \left( \frac{R}{r} \right)^{\frac{n}{|\gamma|}} \left( \frac{R}{R-r} \right)^{\frac{n+p}{|\gamma|}} \\
&\quad \times \left( 1 + \frac{\mathrm{Tail}^{p-1}((u_m^-+d)_-; x_0, R)}{d^{p-1}} \right)^{\frac{1}{|\gamma|}} \Phi_{\gamma \chi;\, x_0, r}(u_m^-+d)
\end{split}
\end{equation*}
when $\gamma <0$, where $u_m^-$ and $\Phi$ are given as in \eqref{eq:m} and \eqref{eq:modular}, respectively. The constant $C$ depends only on $n$, $s$, $p$, $\Lambda$, and $\beta$, and is bounded when $|\beta|$ is bounded away from zero.
\end{lemma}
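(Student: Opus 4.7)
The plan is to transcribe the proof of \Cref{lem:iteration-subsoln} nearly verbatim, with \Cref{lem:Caccio-subsoln} replaced by \Cref{lem:Caccio-supersoln}, while carefully tracking the direction of inequalities once one raises to the power $1/\gamma$, since $\gamma$ may now be negative.

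First I would apply \Cref{lem:Caccio-supersoln} with $\bar u := u_m^- + d$ on $B_r(x_0) \subset B_R(x_0)$. In both regimes of the statement the exponent $\beta = \gamma - (p-1)$ is strictly negative, and since $\bar u \geq d > 0$ on $B_R(x_0)$, the pointwise bound $\bar u^\beta = \bar u^\gamma \bar u^{-(p-1)} \leq d^{1-p} \bar u^\gamma$ integrates to $\Phi_{\beta;x_0,R}^\beta(\bar u) \leq d^{1-p} \Phi_{\gamma;x_0,R}^\gamma(\bar u)$. Substituting this into \Cref{lem:Caccio-supersoln} and absorbing both terms under the larger exponent $(R/(R-r))^{n+p}$ yields
$$[\bar u^{\gamma/p}]_{W^{s,p}(B_r(x_0))}^p \leq C(1+|\gamma|)^p R^{n-sp}\Bigl(\frac{R}{R-r}\Bigr)^{n+p} \Bigl(1 + \frac{\mathrm{Tail}^{p-1}((u_m^-+d)_-; x_0, R)}{d^{p-1}}\Bigr)\Phi_{\gamma;x_0,R}^\gamma(\bar u),$$
with a constant $C$ that remains bounded whenever $|\beta|$ is kept away from zero; the prefactor $|\gamma|^p/|\beta|$ appearing in \Cref{lem:Caccio-supersoln} is exactly the source of this dependence.

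Next I would apply the fractional Sobolev inequality to the nonnegative function $\bar u^{\gamma/p}$ on $B_r(x_0)$. In the borderline case $p = n/s$ I would proceed exactly as at the end of the proof of \Cref{lem:iteration-subsoln}, choosing auxiliary $q \in [1,p)$ and $\sigma \in (0,s)$ with $\chi = q^{\ast}_\sigma/p > 1$ and passing through the subcritical embedding together with \cite[Lemma 4.6]{Coz17} and H\"older's inequality. Because $|\bar u^{\gamma/p}|^{p^{\ast}} = \bar u^{\gamma\chi}$ and $|\bar u^{\gamma/p}|^p = \bar u^\gamma$, and because $\bar u^\gamma \geq 0$ implies the monotonicity $\Phi_{\gamma;x_0,r}^\gamma(\bar u) \leq (R/r)^n \Phi_{\gamma;x_0,R}^\gamma(\bar u)$, this step produces
$$\Phi_{\gamma\chi; x_0, r}^\gamma(\bar u) \leq C(1+|\gamma|)^p \Bigl(\frac{R}{r}\Bigr)^n \Bigl(\frac{R}{R-r}\Bigr)^{n+p}\Bigl(1 + \frac{\mathrm{Tail}^{p-1}((u_m^-+d)_-; x_0, R)}{d^{p-1}}\Bigr)\Phi_{\gamma;x_0,R}^\gamma(\bar u).$$

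The final step is to raise both sides to the power $1/\gamma$. When $0 < \gamma < p-1$ this exponent is positive, so the inequality is preserved and the first conclusion drops out immediately. When $\gamma < 0$ the exponent $1/\gamma = -1/|\gamma|$ is negative, so both sides flip; rearranging then yields exactly the second conclusion with the prescribed $1/|\gamma|$ powers. I expect the only subtle step to be this final sign-flip and the accompanying rearrangement — everything preceding it is a mechanical adaptation of the argument used to establish \Cref{lem:iteration-subsoln}.
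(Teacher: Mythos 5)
Your proposal is correct and is exactly the argument the paper intends: the authors explicitly omit this proof, stating that one follows the proof of \Cref{lem:iteration-subsoln} with \Cref{lem:Caccio-supersoln} in place of \Cref{lem:Caccio-subsoln}. Your handling of the two sign regimes for $\gamma$ — in particular the inequality reversal when raising to the power $1/\gamma<0$, which is what produces the reversed roles of $\Phi_{\gamma\chi;\,x_0,r}$ and $\Phi_{\gamma;\,x_0,R}$ in the second estimate — correctly supplies the one detail that differs from the subsolution case.
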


The Caccoppoli-type estimate with $\gamma=0$, \Cref{lem:Caccio-log}, allows us to find an exponent $\bar{p}\in(0,1)$, which connects $\Phi_{-\bar{p}}$ and $\Phi_{\bar{p}}$.

\begin{lemma} \label{lem:iteration-log}
Let $p \in (1, \infty)$. Let $u \in V^{s, p}(\Omega|\mathbb{R}^n)$ be a weak supersolution of $\mathcal{L}u=0$ in $\Omega$ such that $u \geq 0$ in $B_R(x_0) \subset \mathbb{R}^n$. Assume that
\begin{equation} \label{eq:d}
d \geq \mathrm{Tail}((u_m^-)_-; x_0, R),
\end{equation}
then there exist $\bar{p} \in (0,1)$ and $C > 0$ such that
\begin{equation} \label{eq:p-bar}
\Phi_{\bar{p}; \, x_0, 3R/4}(u_m^-+d) \leq C\Phi_{-\bar{p}; \, x_0, 3R/4}(u_m^-+d),
\end{equation}
where $u_m^-$ is given as in \eqref{eq:m}. The constants $\bar{p}$ and $C$ depend only on $n$, $s$, $p$, and $\Lambda$.
\end{lemma}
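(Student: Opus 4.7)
I will follow the classical Moser scheme that bridges positive and negative exponents through the logarithm: use the logarithmic Caccioppoli estimate to put $w := \log(u_m^- + d)$ into an $L^p$-BMO space, then invoke a John--Nirenberg-type exponential integrability result, and finally reinterpret the resulting two-sided inequality as \eqref{eq:p-bar}.

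Set $\bar u := u_m^- + d$. The hypothesis $d \geq \mathrm{Tail}((u_m^-)_-; x_0, R)$, together with the pointwise inequality $\bar u_- \leq (u_m^-)_-$, yields $\mathrm{Tail}^{p-1}(\bar u_-; x_0, R) \leq d^{p-1}$, while $\bar u \geq d$ gives $\Phi_{1-p;\,x_0,R}^{1-p}(\bar u) \leq d^{1-p}$. Feeding these into \Cref{lem:Caccio-log} with $r = 3R/4$ (so that $(R/(R-r))^p$ and $(R/(R-r))^{n+sp}$ are dimensional constants), the tail contribution absorbs to a constant and we obtain
\[
    [w]_{W^{s,p}(B_{3R/4}(x_0))}^p \leq C R^{n-sp}.
\]
The fractional Poincar\'e inequality \Cref{poincare1} on $B_{3R/4}(x_0)$ then produces the zeroth-order bound
\[
    \fint_{B_{3R/4}(x_0)} \bigl|w - (w)_{B_{3R/4}(x_0)}\bigr|^p \,\mathrm{d}x \leq C.
\]

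The next step is to upgrade this single-scale $L^p$ estimate to exponential integrability: to find $\bar p, C>0$ (depending only on $n,s,p,\Lambda$) such that
\[
    \fint_{B_{3R/4}(x_0)} \exp\bigl(\bar p\,|w - (w)_{B_{3R/4}(x_0)}|\bigr)\,\mathrm{d}x \leq C.
\]
This is not immediate from the bound above, because $W^{s,p} \hookrightarrow \mathrm{BMO}$ fails when $sp<n$. The resolution is to exploit the scale-invariance of \Cref{lem:Caccio-log}: rerunning the same chain of steps on every sub-ball $B_\rho(y) \subset B_{R/2}(x_0)$---splitting the tail over $\mathbb{R}^n \setminus B_\rho(y)$ into an inner part on $B_R(x_0) \setminus B_\rho(y)$ (where $\bar u_- \equiv 0$ since $u \geq 0$ in $B_R(x_0)$) and an outer part on $\mathbb{R}^n \setminus B_R(x_0)$ (comparing $|z-y| \simeq |z-x_0|$ there)---yields
\[
    \fint_{B_\rho(y)} |w - (w)_{B_\rho(y)}|^p \,\mathrm{d}x \leq C
\]
uniformly in $y$ and $\rho$, i.e.\ $w \in \mathrm{BMO}(B_{R/2}(x_0))$ with a dimensional bound on its BMO norm. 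The classical John--Nirenberg inequality now furnishes the exponential integrability.

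From this exponential bound, $e^{\bar p w}$ and $e^{-\bar p w}$ are integrable with averages bounded by $C\,e^{\pm \bar p (w)_{B_{3R/4}(x_0)}}$; multiplying and using $e^{\pm\bar p w} = \bar u^{\pm\bar p}$ causes the exponentials to cancel, yielding
\[
    \left(\fint_{B_{3R/4}(x_0)} \bar u^{\bar p}\,\mathrm{d}x\right)\left(\fint_{B_{3R/4}(x_0)} \bar u^{-\bar p}\,\mathrm{d}x\right) \leq C^2.
\]
Taking $\bar p$-th roots gives $\Phi_{\bar p;\,x_0,3R/4}(\bar u) \leq C^{2/\bar p}\,\Phi_{-\bar p;\,x_0,3R/4}(\bar u)$, which is \eqref{eq:p-bar}.

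\textbf{Main obstacle.} The genuine hurdle is the upgrade from single-scale $L^p$-oscillation to exponential integrability. This forces the logarithmic Caccioppoli bound to be established uniformly at every scale inside $B_{R/2}(x_0)$, and the delicate bookkeeping lies in controlling the tail integral based at an arbitrary subball $B_\rho(y)$ by the original tail at $x_0$. Once a dimensional BMO bound on $w$ is secured, the rest is classical John--Nirenberg plus algebraic rearrangement.
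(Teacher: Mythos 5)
Your proposal is correct and follows essentially the same route as the paper: apply the logarithmic Caccioppoli estimate on subballs of $B_{3R/4}(x_0)$, control the tail based at a subball center by the tail based at $x_0$ (using $u\geq 0$ in $B_R(x_0)$ and the hypothesis on $d$), combine with the fractional Poincar\'e inequality and H\"older to get a uniform BMO bound on $\log(u_m^-+d)$, and conclude via John--Nirenberg and the cancellation of $e^{\pm\bar p(\log v)_{B_{3R/4}}}$. The only cosmetic difference is that you first state a single-scale estimate before upgrading to all subballs, whereas the paper works with arbitrary centers $z_0\in B_{3R/4}(x_0)$ at a fixed comparable radius from the outset.
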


\begin{proof}
Let $z_0 \in B_{3R/4}(x_0)$ and $r =R/16$ so that $B_{2r}(z_0) \subset B_{7R/8}(x_0)$. By applying \Cref{lem:Caccio-log} to $u$ in $B_{r}(z_0) \subset B_{2r}(z_0)$, we have
\begin{equation} \label{eq:log1}
\begin{split}
&\int_{B_{r}(z_0)} \int_{B_{r}(z_0)} \frac{|\log v(x) - \log v(y)|^p}{|x-y|^{n+sp}} \,\mathrm{d}y \,\mathrm{d}x \\
&\leq Cr^{n-sp} \left( 1 + \Phi_{1-p;\, z_0, R}^{1-p}(v) \,\mathrm{Tail}^{p-1}(v_-; z_0, 2r) \right) \\
&\leq Cr^{n-sp} \left( 1 + \frac{\mathrm{Tail}^{p-1}(v_-; z_0, 2r)}{d^{p-1}} \right),
\end{split}
\end{equation}
where we denote $v = u_m^-+d$. Since $v > 0$ in $B_R(x_0)$ and $v_- \leq (u_m^-)_-$, we obtain
\begin{equation} \label{eq:log2}
\frac{\mathrm{Tail}^{p-1}(v_-; z_0, 2r)}{d^{p-1}} \leq C \frac{\mathrm{Tail}^{p-1}((u_m^-)_-; x_0, R)}{d^{p-1}} \leq C
\end{equation}
by the assumption \eqref{eq:d}.

By the fractional Poincar\'e inequality, \Cref{poincare1}, we obtain
\begin{equation} \label{eq:log3}
\left\|\log v-(\log v)_{B_r(z_0)} \right\|_{L^p(B_r(z_0))}^p \leq Cr^{sp}[\log v]_{W^{s, p}(B_r(z_0))}^p.
\end{equation}
Thus, it follows from \eqref{eq:log1}, \eqref{eq:log2}, \eqref{eq:log3}, and H\"older's inequality that
\begin{equation*}
\int_{B_r(z_0)} |\log v(x)-(\log v)_{B_r(z_0)}| \,\mathrm{d}x \leq Cr^{n(1-1/p)} \left\|\log v-(\log v)_{B_r(z_0)} \right\|_{L^p(B_r(z_0))} \leq Cr^{n},
\end{equation*}
which implies that $\log v$ is a function of bounded mean oscillation. By the John--Nirenberg embedding, there exists $\bar{p} \in (0,1)$ such that
\begin{equation*}
\fint_{B_{3R/4}(x_0)} \exp \left( \bar{p} |\log v(x) - (\log v)_{B_{3R/4}(x_0)}| \right) \mathrm{d}x \leq C.
\end{equation*}
Therefore, we conclude that
\begin{equation*}
\Phi_{\bar{p};\,x_0, 3R/4}^{\bar{p}}(v) \Phi_{-\bar{p};\,x_0, 3R/4}^{-\bar{p}}(v) \leq C \exp\left( \bar{p} (\log v)_{B_{3R/4}(x_0)} \right)\exp\left( -\bar{p} (\log v)_{B_{3R/4}(x_0)} \right) \leq C
\end{equation*}
for some $C = C(n, s, p, \Lambda) > 0$.
\end{proof}

We now prove \Cref{thm:bdry-WHI} by using Moser's iteration technique. Since the proof is quite standard, let us provide a sketch of proof.

\begin{proof} [Proof of \Cref{thm:bdry-WHI}]
Let $\bar{p}$ be the exponent given in \Cref{lem:iteration-log}. By setting
\begin{equation} \label{eq:d-tail}
d = \mathrm{Tail}((u_m^-)_-; x_0, R),
\end{equation}
we have \eqref{eq:p-bar}. Moreover, we have from \Cref{lem:iteration-supersoln} that for any $1/4\leq \rho<\tau \leq 1$,
\begin{equation} \label{eq:Phi-positive}
\Phi_{\gamma \chi;\, x_0, \rho R}(u_m^-+d) \leq \left( \frac{C(1+|\gamma|)}{\tau - \rho} \right)^{\frac{n+p}{|\gamma|}} \Phi_{\gamma;\, x_0, \tau R}(u_m^-+d)
\end{equation}
when $0<\gamma< p-1$, and
\begin{equation} \label{eq:Phi-negative}
\Phi_{\gamma;\, x_0, \tau R}(u_m^-+d) \leq \left( \frac{C(1+|\gamma|)}{\tau-\rho} \right)^{\frac{n+p}{|\gamma|}} \Phi_{\gamma \chi;\, x_0, \rho R}(u_m^-+d)
\end{equation}
when $\gamma <0$. The constant $C$ is bounded when $|\beta|$ is bounded away from zero.

By iterating \eqref{eq:Phi-negative} for $j=0,1,\dots$, with
\begin{equation*}
\rho = \frac{1+2^{-j+1}}{4}, \quad \tau = \frac{1+2^{-j+2}}{4}, \quad\text{and}\quad \gamma = -\bar{p} \chi^{j},
\end{equation*}
we obtain
\begin{equation} \label{eq:p-bar-inf}
\Phi_{-\bar{p}; \, x_0, 3R/4}(u_m^-+d) \leq C \winf_{B_{R/4}(x_0)} (u_m^-+d).
\end{equation}
If $0< t\leq \bar{p}$, it readily follows from \eqref{eq:p-bar}, \eqref{eq:p-bar-inf}, and H\"older's inequality that
\begin{equation} \label{eq:WHI-d}
\Phi_{t; \, x_0, R/2}(u_m^-+d) \leq C \winf_{B_{R/4}(x_0)} (u_m^-+d).
\end{equation}
Otherwise, we iterate \eqref{eq:Phi-positive} in a similar manner so that
\begin{equation} \label{eq:p-bar-Lt}
\Phi_{t; \, x_0, R/2}(u_m^-+d) \leq C\Phi_{\bar{p}; \, x_0, 3R/4}(u_m^-+d).
\end{equation}
In this case, \eqref{eq:WHI-d} follows from \eqref{eq:p-bar-Lt}, \eqref{eq:p-bar}, and \eqref{eq:p-bar-inf}.

Finally, using the triangle inequality, \eqref{eq:WHI-d}, and \eqref{eq:d-tail}, we conclude the theorem.
\end{proof}


\section{The sufficient condition for the regularity of a boundary point} \label{sec:sufficient}


We are ready to prove the sufficient condition for a regular boundary point with respect to $\mathcal{L}$. As mentioned in \Cref{sec:introduction}, we focus on the case $p \in (1, n/s]$. We mainly follow the ideas of \cite{GZ77, HV21}, but further analysis is required to treat the tail term.

Let us fix a boundary point $x_0 \in \partial \Omega$. Let $g \in V^{s, p}(\Omega|\mathbb{R}^n) \cap C(\mathbb{R}^n)$ and let $u\in V^{s, p}(\Omega|\mathbb{R}^n) \cap C(\Omega)$ be the $\mathcal{L}$-harmonic function with $u-g \in W^{s, p}_0(\Omega)$. We will prove \eqref{eq:limit}, or equivalently,
\begin{equation} \label{eq:contrad}
\lim_{\rho \to 0} \sup_{\Omega \cap B_{\rho}(x_0)}u \leq g(x_0) \quad \text{and} \quad \lim_{\rho \to 0} \inf_{\Omega \cap B_{\rho}(x_0)}u \geq g(x_0),
\end{equation}
provided that the Wiener integral diverges. By symmetry, we only prove the first inequality of \eqref{eq:contrad}. We assume to the contrary that
\begin{equation} \label{eq:L}
L:=\lim_{\rho \to 0} \sup_{\Omega \cap B_{\rho}(x_0)}u > g(x_0)
\end{equation}
and choose $l \in \mathbb{R}$ so that $g(x_0)<l<L$. By the continuity of $g$, there exists a sufficiently small radius $r_{\ast}>0$ such that
\begin{equation*}
l \geq \sup_{\overline{B_r(x_0)} \setminus \Omega}g \quad \text{for any } r \in (0, r_{\ast}).
\end{equation*}
We set
\begin{equation*}
M_l(r)=\wsup_{B_{r}(x_0)} \, (u-l)_+ \quad \text{and}\quad u_{l, r}=M_l(r)-(u-l)_+ \quad \text{for } r>0.
\end{equation*}
Note that since $\lim_{r \to 0} M_l(r)=L-l>0$, we have $M_{l}(r) \geq L-l>0$ for any $r >0$. Moreover, $u_{l, r}$ is a weak supersolution of $\mathcal{L}u=0$ in $\Omega$. Recall the notation $u_m^-$ in \eqref{eq:m}. Since $(u-l)_+=0$ in $B_r(x_0) \setminus \Omega$ in the sense of $V^{s, p}(\Omega|\mathbb{R}^n)$, we have
\begin{equation*}
\winf_{B_{r}(x_0) \setminus \Omega} u_{l, r} = \wsup_{B_{r}(x_0) \setminus \Omega} u_{l, r} = M_l(r),
\end{equation*}
and hence $(u_{l, r})_m^- = u_{l, r}$. 

The following lemma provides a key estimate of the $(s, p)$-capacity of a compact set
\begin{equation} \label{eq:D}
D_{\rho}(x_0) := \overline{B_{\rho}(x_0)} \setminus \Omega
\end{equation}
with respect to $B_{2\rho}(x_0)$, which will be used to find a contradiction in the end of this section.

\begin{lemma} \label{lem:key}
There exists a constant $C = C(n, s, p, \Lambda) > 0$ such that
\begin{equation*}
\mathrm{cap}_{s,p}(D_{\rho}(x_0), B_{2\rho}(x_0)) \leq C\rho^{n-sp} \left(\frac{M_l(4\rho)-M_l(\rho)+\mathrm{Tail}((u_{l, 4\rho})_-; x_0, 4\rho)}{M_l(4\rho)} \right)^{p-1}
\end{equation*}
for any $\rho \in (0, r_{\ast}/4)$, where $D_{\rho}(x_0)$ is given by \eqref{eq:D}.
\end{lemma}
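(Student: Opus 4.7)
The plan has four steps: setting up the supersolution structure of $u_{l,4\rho}$, applying the boundary weak Harnack inequality, comparing with the $\mathcal{L}$-potential of $D_\rho(x_0)$, and converting the resulting $L^{p-1}$-norm bound into a capacity estimate.

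First, since $(u-l)_+$ is a weak subsolution (being the positive truncation of a solution), $u_{l,4\rho} = M_l(4\rho) - (u-l)_+$ is a weak supersolution to $\mathcal{L}u = 0$ in $\Omega$. Using $l \geq \sup_{\overline{B_{4\rho}(x_0)} \setminus \Omega} g$ and the boundary trace of $u$, one has $(u-l)_+ = 0$ and hence $u_{l,4\rho} = M_l(4\rho)$ on $\overline{B_{4\rho}(x_0)} \setminus \Omega$ in the $V^{s,p}$-sense, with $u_{l,4\rho} \geq 0$ on $B_{4\rho}(x_0)$. In particular $\winf_{B_{4\rho}(x_0)\setminus\Omega} u_{l,4\rho} = M_l(4\rho)$ (so $(u_{l,4\rho})_m^- = u_{l,4\rho}$), and $\winf_{B_\rho(x_0)} u_{l,4\rho} = M_l(4\rho) - M_l(\rho)$. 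Applying \Cref{thm:bdry-WHI} with $t = p-1$ (which lies in $(0,(p-1)n/(n-sp))$ since $sp>0$) on $B_{4\rho}(x_0)$ yields
\[
\fint_{B_{2\rho}(x_0)} u_{l,4\rho}^{p-1}\,\mathrm{d}x \leq C\bigl[(M_l(4\rho) - M_l(\rho)) + \mathrm{Tail}((u_{l,4\rho})_-; x_0, 4\rho)\bigr]^{p-1}.
\]

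Next, let $R_\rho := \mathfrak{R}(D_\rho(x_0), B_{2\rho}(x_0))$. By \Cref{lem:potential}, $R_\rho$ is a weak supersolution in $B_{2\rho}(x_0)$ which is $\mathcal{L}$-harmonic in $B_{2\rho}(x_0) \setminus D_\rho(x_0)$, equal to $1$ on $D_\rho(x_0)$ and to $0$ on $\mathbb{R}^n \setminus B_{2\rho}(x_0)$, with $0 \leq R_\rho \leq 1$. Comparing $(u_{l,4\rho})_+$ (a supersolution in $\Omega$, and in particular in $B_{2\rho}(x_0) \setminus D_\rho(x_0)$) with the $\mathcal{L}$-harmonic function $M_l(4\rho) R_\rho$ on $B_{2\rho}(x_0) \setminus D_\rho(x_0)$: on $D_\rho(x_0)$ one has $(u_{l,4\rho})_+ = M_l(4\rho) \geq M_l(4\rho) R_\rho$ (since $R_\rho \leq 1$), and on $\mathbb{R}^n \setminus B_{2\rho}(x_0)$ one has $M_l(4\rho) R_\rho = 0 \leq (u_{l,4\rho})_+$. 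The comparison principle (\Cref{thm:comparison}) then yields $(u_{l,4\rho})_+ \geq M_l(4\rho) R_\rho$ in $B_{2\rho}(x_0)$. Combining with the weak Harnack bound above,
\[
\int_{B_{2\rho}(x_0)} R_\rho^{p-1}\,\mathrm{d}x \leq \frac{1}{M_l(4\rho)^{p-1}}\int_{B_{2\rho}(x_0)} u_{l,4\rho}^{p-1}\,\mathrm{d}x \leq C\rho^{n}\left(\frac{M_l(4\rho) - M_l(\rho) + \mathrm{Tail}((u_{l,4\rho})_-; x_0, 4\rho)}{M_l(4\rho)}\right)^{p-1}.
\]

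Finally, one converts this $L^{p-1}$-bound on $R_\rho$ into the desired capacity estimate via an inequality of the form
\[
\mathrm{cap}_{s,p}(D_\rho(x_0), B_{2\rho}(x_0)) \leq C\rho^{-sp}\int_{B_{2\rho}(x_0)} R_\rho^{p-1}\,\mathrm{d}x,
\]
obtained by applying a Caccioppoli-type inequality to the supersolution $R_\rho$ and exploiting the boundary condition $R_\rho = 1$ on $D_\rho(x_0)$. The main obstacle is this capacity--potential conversion: the natural Caccioppoli estimate (\Cref{lem:Caccio-supersoln}) only controls $(R_\rho + d)^{\gamma/p}$ for $\gamma < p-1$, not $R_\rho^{p-1}$ directly. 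The argument may therefore proceed instead through the log Caccioppoli inequality (\Cref{lem:Caccio-log}) combined with a Poincar\'e-type bound, or via constructing a specific admissible function for the capacity and estimating its energy using the supersolution property of $R_\rho$. Fortunately, since $R_\rho$ is supported in $B_{2\rho}(x_0)$, no further tail analysis is needed at this step, and the tail contribution in the claimed bound arises solely from Step 2.
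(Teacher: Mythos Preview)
Your approach diverges from the paper's, and the comparison step contains a genuine gap that the nonlocal setting makes fatal.

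You claim that $(u_{l,4\rho})_+$ is a weak supersolution in $B_{2\rho}(x_0)\setminus D_\rho(x_0)$ and compare it with $M_l(4\rho)R_\rho$ there. Two issues arise. First, $B_{2\rho}(x_0)\setminus D_\rho(x_0)$ is \emph{not} contained in $\Omega$: the annulus $B_{2\rho}\setminus\overline{B_\rho}$ may meet $\mathbb{R}^n\setminus\Omega$, so $u_{l,4\rho}$ (a supersolution only in $\Omega$) need not be one there. Second, and more seriously, taking the positive part does not preserve the supersolution property for nonlocal operators. Even if you retreat to the correct domain $\Omega\cap B_{2\rho}$ and use $u_{l,4\rho}$ itself, the nonlocal comparison principle (\Cref{thm:comparison}) requires $u_{l,4\rho}\ge M_l(4\rho)R_\rho$ on \emph{all} of $\mathbb{R}^n\setminus(\Omega\cap B_{2\rho})$. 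Outside $B_{2\rho}$ one has $R_\rho=0$, so you would need $u_{l,4\rho}\ge 0$ everywhere outside $B_{2\rho}$; but $u_{l,4\rho}=M_l(4\rho)-(u-l)_+$ can be arbitrarily negative outside $B_{4\rho}$. The long-range interaction cannot be ignored here, and no simple modification (such as taking $(u_{l,4\rho})_+$, which destroys the supersolution property) repairs this.

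Your final conversion step, the inequality $\mathrm{cap}_{s,p}(D_\rho,B_{2\rho})\le C\rho^{-sp}\int_{B_{2\rho}}R_\rho^{p-1}$, is also left as a conjecture; you correctly flag it as the main obstacle but do not resolve it.

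The paper's proof avoids comparison entirely. It builds an admissible function $v_{l,4\rho}=\eta\,u_{l,4\rho}/M_l(4\rho)$ directly from $u_{l,4\rho}$ via a cutoff $\eta\in C_c^\infty(B_{3\rho/2})$, and estimates its $W^{s,p}$-energy by hand: the $L^p$-part is controlled by $\rho^{-sp}\int_{B_{2\rho}}u_{l,4\rho}^{p-1}$ (to which your weak Harnack argument applies), while the seminorm of $u_{l,4\rho}$ on $B_{3\rho/2}$ is bounded separately via a Caccioppoli-type estimate (\Cref{lem:gradient-est}) that again reduces to the same right-hand side. This direct construction of an admissible competitor is exactly what sidesteps the nonlocal comparison obstruction.
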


\begin{proof}
Let us write $B_{\rho} = B_{\rho}(x_0)$ and $D_{\rho} = D_{\rho}(x_0)$ for simplicity. To begin with, we define
\begin{equation*}
v_{l, 4\rho}:=\frac{\eta \, u_{l, 4\rho}}{M_l(4\rho)},
\end{equation*}
where $\eta \in C_c^{\infty}(B_{3\rho/2})$ is a cut-off function such that $\eta \equiv 1$ on $\overline{B_{\rho}}$, $0 \leq \eta \leq 1$, and $|\nabla \eta| \leq C/\rho$. It is immediately checked that $v_{l, 4\rho}$ is admissible for $\mathrm{cap}_{s,p}(D_{\rho}, B_{2\rho})$, and hence
\begin{equation*}
\begin{split}
\mathrm{cap}_{s,p}(D_{\rho}, B_{2\rho})
&\leq \int_{\mathbb{R}^n} \int_{\mathbb{R}^n} \frac{|v_{l, 4\rho}(x) - v_{l, 4\rho}(y)|^p}{|x-y|^{n+sp}} \,\mathrm{d}y \,\mathrm{d}x \\
&\leq 2\int_{B_{3\rho/2}} \int_{\mathbb{R}^n} \frac{|v_{l, 4\rho}(x) - v_{l, 4\rho}(y)|^p}{|x-y|^{n+sp}} \,\mathrm{d}y \,\mathrm{d}x =: I.
\end{split}
\end{equation*}
Since
\begin{equation*}
|v_{l, 4\rho}(x)-v_{l, 4\rho}(y)|^p \leq \frac{2^{p-1}}{M_l^p(4\rho)} \left( u_{l, 4\rho}^p(x) |\eta(x)-\eta(y)|^p + |u_{l, 4\rho}(x)-u_{l, 4\rho}(y)|^p \eta^p(y) \right)
\end{equation*}
and
\begin{equation} \label{eq:carre-du-champ}
\int_{\mathbb{R}^n} \frac{|\eta(x)-\eta(y)|^p}{|x-y|^{n+sp}} \,\mathrm{d}y \leq \frac{C}{\rho^p} \int_{\mathbb{R}^n} \frac{|x-y|^p \land \rho^p}{|x-y|^{n+sp}} \,\mathrm{d}y \leq C \rho^{-sp},
\end{equation}
we have
\begin{equation} \label{eq:cap-v}
\begin{split}
I
&\leq C \frac{\rho^{n-sp}}{M_l^p(4\rho)} \fint_{B_{3\rho/2}} u_{l, 4\rho}^p(x) \,\mathrm{d}x + \frac{C}{M_l^p(4\rho)} \int_{B_{3\rho/2}} \int_{\mathbb{R}^n} \frac{|u_{l, 4\rho}(x)-u_{l, 4\rho}(y)|^p}{|x-y|^{n+sp}} \eta^p(y) \,\mathrm{d}y \,\mathrm{d}x \\
&\leq C \frac{\rho^{n-sp}}{M_l^{p-1}(4\rho)} \fint_{B_{2\rho}} u_{l, 4\rho}^{p-1}(x) \,\mathrm{d}x + \frac{C}{M_l^p(4\rho)} \int_{B_{3\rho/2}} \int_{B_{3\rho/2}} \frac{|u_{l, 4\rho}(x)-u_{l, 4\rho}(y)|^p}{|x-y|^{n+sp}} \,\mathrm{d}y \,\mathrm{d}x.
\end{split}
\end{equation}
For the first term on the right-hand side of \eqref{eq:cap-v}, since $u_{l, 4\rho}$ is a weak supersolution such that $u_{l, 4\rho} \geq 0$ in $B_{4\rho}$, we apply \Cref{thm:bdry-WHI} to obtain
\begin{equation*}
\begin{split}
\frac{\rho^{n-sp}}{M_l^{p-1}(4\rho)} \fint_{B_{2\rho}} u_{l, 4\rho}^{p-1}(x) \,\mathrm{d}x
&\leq C \frac{\rho^{n-sp}}{M_l^{p-1}(4\rho)} \left( \winf_{B_{\rho}} u_{l, 4\rho} +\mathrm{Tail}((u_{l, 4\rho})_-; x_0, 4\rho) \right)^{p-1}\\
&= C \rho^{n-sp} \left(\frac{M_l(4\rho)-M_l(\rho)+\mathrm{Tail}((u_{l, 4\rho})_-; x_0, 4\rho)}{M_l(4\rho)} \right)^{p-1}.
\end{split}
\end{equation*}
It only remains to estimate the last term on the right-hand side of \eqref{eq:cap-v}, but the desired estimate follows from \Cref{lem:gradient-est} below.
\end{proof}

\begin{lemma} \label{lem:gradient-est}
There exists a constant $C = C(n, s, p, \Lambda) > 0$ such that
\begin{equation*}
\begin{split}
&\frac{1}{M_l^p(4\rho)} \int_{B_{3\rho/2}(x_0)} \int_{B_{3\rho/2}(x_0)} \frac{|u_{l, 4\rho}(x)-u_{l, 4\rho}(y)|^p}{|x-y|^{n+sp}} \,\mathrm{d}y \,\mathrm{d}x \\
&\leq C\rho^{n-sp} \left(\frac{M_l(4\rho)-M_l(\rho)+\mathrm{Tail}((u_{l, 4\rho})_-; x_0, 4\rho)}{M_l(4\rho)} \right)^{p-1}
\end{split}
\end{equation*}
for any $\rho \in (0, r_{\ast}/4)$.
\end{lemma}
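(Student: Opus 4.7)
The strategy is to apply the Caccioppoli-type inequality for weak supersolutions (\Cref{lem:Caccio-supersoln}) to $v := u_{l, 4\rho}$ on the pair of concentric balls $B_{3\rho/2}(x_0) \subset B_{2\rho}(x_0)$, and to combine the resulting modular bounds with the boundary weak Harnack inequality (\Cref{thm:bdry-WHI}). The function $v$ satisfies the hypotheses of \Cref{thm:bdry-WHI} on $B_{4\rho}(x_0)$: it is a nonnegative weak supersolution there, and it equals the constant $M_l(4\rho)$ on $B_{4\rho}(x_0) \setminus \Omega$ in the sense of $V^{s,p}$, so that in the notation of \Cref{thm:bdry-WHI} one has $m = M_l(4\rho)$ and $v_m^{-} = v$ on $B_{4\rho}(x_0)$. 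Consequently
\[
\fint_{B_{2\rho}(x_0)} v(x)^{p-1}\,\mathrm{d}x \;\leq\; C d^{p-1}, \qquad d := M_l(4\rho) - M_l(\rho) + \mathrm{Tail}\bigl((u_{l, 4\rho})_-; x_0, 4\rho\bigr),
\]
which is precisely the quantity controlling the claimed right-hand side.

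Next, I would fix an exponent $\gamma \in (0, p-1)$ with $\beta := \gamma - (p-1) < 0$ and apply \Cref{lem:Caccio-supersoln} to $v$ with $R = 2\rho$, $r = 3\rho/2$, and the shift $d$. To convert the Dirichlet form of the power $(v+d)^{\gamma/p}$ on the left-hand side of that estimate back to the Dirichlet form of $v$ itself, I would invoke the pointwise mean value inequality
\[
|a-b|^{p} \;\leq\; \Bigl(\tfrac{p}{\gamma}\Bigr)^{p} \max(a,b)^{p-\gamma}\,\bigl|a^{\gamma/p}-b^{\gamma/p}\bigr|^{p}, \qquad a,b > 0,
\]
applied with $a = v(x)+d$, $b = v(y)+d$, together with the uniform bound $\max(a,b) \leq M_l(4\rho)+d$ on $B_{2\rho}(x_0)$. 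The two modular quantities in the Caccioppoli estimate are then controlled as follows: (i) $\Phi_{\gamma}^{\gamma}(v+d;x_0,2\rho) \leq C d^{\gamma}$, by Jensen's inequality for the concave map $t \mapsto t^{\gamma/(p-1)}$ combined with the weak Harnack bound above; (ii) $\Phi_{\beta}^{\beta}(v+d;x_0,2\rho) \leq d^{\beta}$, using the pointwise lower bound $v+d \geq d > 0$ in $B_{2\rho}$; and (iii) $\mathrm{Tail}((v+d)_-;x_0,2\rho) \leq C\,\mathrm{Tail}((u_{l,4\rho})_-;x_0,4\rho)$, because $v \geq 0$ in $B_{4\rho}(x_0)$ annihilates the intermediate annular contribution. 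Using $d \geq \mathrm{Tail}((u_{l,4\rho})_-;x_0,4\rho)$ to collapse the remaining terms yields an energy bound of the order $M_l(4\rho)^{p-\gamma}d^{\gamma}\rho^{n-sp}$.

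The main obstacle I foresee is the sharpness of the exponent: this argument, as stated, only produces the factor $(d/M_l(4\rho))^{\gamma}$ rather than the claimed $(d/M_l(4\rho))^{p-1}$, off by a spurious $(M_l(4\rho)/d)^{(p-1)-\gamma}$, and one cannot simply send $\gamma \uparrow p-1$ because the constant in \Cref{lem:Caccio-supersoln} blows up as $|\beta| \downarrow 0$. To close this gap, I would instead test the weak supersolution inequality for $v$ directly with the explicit test function $\varphi := (u-l)_{+}\,\eta^{p}$, where $\eta \in C_c^{\infty}(B_{7\rho/4}(x_0))$ is a cutoff with $\eta \equiv 1$ on $B_{3\rho/2}(x_0)$, $0 \leq \eta \leq 1$, and $|\nabla\eta| \leq C/\rho$; this $\varphi$ is admissible in $W_{0}^{s,p}(\Omega)$ since $(u-l)_{+} \equiv 0$ on $B_{4\rho}\setminus\Omega$ by the choice $l \geq \sup_{\overline{B_r(x_0)}\setminus\Omega} g$ for small $r$. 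Expanding $\mathcal{E}^{k}(v,(u-l)_{+}\eta^{p})\geq 0$ and using the structural identity $v + (u-l)_{+} \equiv M_l(4\rho)$ on $B_{4\rho}(x_0)$ should isolate the interior Dirichlet form of $v$ on the left while producing a cross term in which one factor is uniformly bounded by $M_l(4\rho)$; after Young's inequality absorbs a small fraction of the interior energy into the left-hand side, the remaining right-hand side is controlled by $M_l(4\rho)\cdot\fint v^{p-1}$ plus cutoff-function terms, and the weak Harnack bound on $\fint v^{p-1}$ then delivers the sharp scaling $M_l(4\rho)\,d^{p-1}\,\rho^{n-sp}$.
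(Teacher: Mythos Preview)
Your second approach---testing the supersolution inequality for $v=u_{l,4\rho}$ directly with $\varphi=(u-l)_+\eta^p$---is exactly how the paper begins, and you have correctly identified both the structural identity $\varphi(x)-\varphi(y)=(u(x)-l)_+(\eta^p(x)-\eta^p(y))-(v(x)-v(y))\eta^p(y)$ and the fact that the naive Caccioppoli route with exponent $\gamma<p-1$ loses sharpness. However, your proposed fix has the same deficiency in a different place.

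The gap is in the cross term. After isolating the interior energy, the local term you must control is
\[
I_1 \;\leq\; C\,M_l(4\rho)\int_{B_{7\rho/4}}\int_{B_{7\rho/4}}\frac{|v(x)-v(y)|^{p-1}}{|x-y|^{n+sp}}\,\frac{|x-y|}{\rho}\,\mathrm{d}y\,\mathrm{d}x.
\]
If you apply Young's inequality here, the piece you \emph{cannot} absorb is $C_\varepsilon\,M_l(4\rho)^{p}\int\int|\eta(x)-\eta(y)|^p k\sim M_l(4\rho)^{p}\rho^{n-sp}$, which after dividing by $M_l(4\rho)^p$ gives $\rho^{n-sp}$ rather than $\rho^{n-sp}(d/M_l(4\rho))^{p-1}$. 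So the ``cutoff-function terms'' you dismiss are precisely where the sharp exponent is lost; the term $M_l(4\rho)\fint v^{p-1}$ never actually appears from the local part via Young. (There is also the secondary issue that the energy on the right lives on $B_{7\rho/4}$ while the left lives on $B_{3\rho/2}$, so absorption would in any case require an iteration.)

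The paper avoids Young's inequality altogether on $I_1$. It applies H\"older's inequality with the weight $\max\{v(x),v(y)\}^{\gamma-p}$, for $\gamma$ close to $p-1$, to split
\[
I_1 \;\leq\; C\,M_l(4\rho)\,\rho^{-s}\Bigl(\int_{B_{2\rho}}(v+d)^{(p-\gamma)(p-1)}\Bigr)^{1/p}\Bigl(\int\int\frac{|(v+d)^{\gamma/p}(x)-(v+d)^{\gamma/p}(y)|^{p}}{|x-y|^{n+sp}}\Bigr)^{(p-1)/p},
\]
the passage to the $\gamma$-energy relying on \Cref{lem:alg-ineq-suff}. The first factor is a legitimate sub-critical moment, controlled by $C\rho^{n/p}D^{(p-\gamma)(p-1)/p}$ via \Cref{thm:bdry-WHI}; the second is the $\gamma$-Caccioppoli energy, controlled by $C(\rho^{n-sp}D^{\gamma})^{(p-1)/p}$ via \Cref{lem:Caccio-supersoln} and \Cref{thm:bdry-WHI} again, where $D=M_l(4\rho)-M_l(\rho)+\mathrm{Tail}((u_{l,4\rho})_-;x_0,4\rho)$. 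Multiplying, the exponents recombine as $\tfrac{(p-\gamma)(p-1)}{p}+\tfrac{\gamma(p-1)}{p}=p-1$, which is exactly the mechanism that recovers the sharp scaling. Your first approach was in fact the right ingredient---it just has to be fed back into the cross term rather than used on its own.
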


\begin{proof}
We write $B_r = B_r(x_0)$ for simplicity. Let $\eta \in C_c^{\infty}(B_{13\rho/8})$ be a cut-off function satisfying $\eta \equiv 1$ on $\overline{B_{3\rho/2}}$, $0 \leq \eta \leq 1$, and $|\nabla \eta| \leq C/\rho$. Then, $\varphi = (u-l)_+ \eta^p \in W^{s, p}_0(\Omega)$. Since $u_{l, 4\rho}$ is a weak supersolution of $\mathcal{L}u=0$ in $\Omega$, we obtain
\begin{equation*}
\begin{split}
0
&\leq \int_{B_{7\rho/4}}\int_{B_{7\rho/4}} |u_{l, 4\rho}(x)-u_{l, 4\rho}(y)|^{p-2}(u_{l, 4\rho}(x)-u_{l, 4\rho}(y)) (\varphi(x) - \varphi(y)) k(x, y) \,\mathrm{d}y\,\mathrm{d}x \\
&\quad + 2\int_{B_{7\rho/4}}\int_{\mathbb{R}^n \setminus B_{7\rho/4}} |u_{l, 4\rho}(x)-u_{l, 4\rho}(y)|^{p-2}(u_{l, 4\rho}(x)-u_{l, 4\rho}(y)) \varphi(x) k(x,y) \,\mathrm{d}y\,\mathrm{d}x.
\end{split}
\end{equation*}
By using $\varphi(x) - \varphi(y) = (u(x)-l)_+ (\eta^p(x) - \eta^p(y)) - (u_{l, 4\rho}(x) - u_{l, 4\rho}(y)) \eta^p(y)$ and the ellipticity assumption \eqref{eq:ellipticity}, we have
\begin{equation} \label{eq:suf-12}
\begin{split}
&\int_{B_{3\rho/2}} \int_{B_{3\rho/2}} \frac{|u_{l, 4\rho}(x)-u_{l, 4\rho}(y)|^p}{|x-y|^{n+sp}} \,\mathrm{d}y \,\mathrm{d}x \\
&\leq \Lambda \int_{B_{7\rho/4}} \int_{B_{7\rho/4}} |u_{l, 4\rho}(x)-u_{l, 4\rho}(y)|^p \eta^p(y) k(x, y) \,\mathrm{d}y \,\mathrm{d}x \\
&\leq \Lambda \int_{B_{7\rho/4}} \int_{B_{7\rho/4}} |u_{l, 4\rho}(x)-u_{l, 4\rho}(y)|^{p-1} (u(x)-l)_+ |\eta^p(x)-\eta^p(y)| k(x, y) \,\mathrm{d}y \,\mathrm{d}x \\
&\quad + 2\Lambda \int_{B_{7\rho/4}}\int_{\mathbb{R}^n \setminus B_{7\rho/4}} |u_{l, 4\rho}(x)-u_{l, 4\rho}(y)|^{p-2}(u_{l, 4\rho}(x)-u_{l, 4\rho}(y)) \varphi(x) k(x,y) \,\mathrm{d}y\,\mathrm{d}x \\
&=: I_1 + I_2.
\end{split}
\end{equation}

Let us first estimate $I_1$. Since $(u(x)-l)_+ \leq M_l(4\rho)$ and $|\eta^p(x)-\eta^p(y)| \leq C|x-y|/\rho$, we estimate $I_1$ as
\begin{equation*}
I_1 \leq C M_l(4\rho) \int_{B_{7\rho/4}} \int_{B_{7\rho/4}} \frac{|v(x)-v(y)|^{p-1}}{|x-y|^{n+sp}} \frac{|x-y|}{\rho} \,\mathrm{d}y \,\mathrm{d}x,
\end{equation*}
where $v := u_{l, 4\rho} + d$ for some $d > 0$, which will be chosen later. Let $\gamma > 0$ be sufficiently close to $p-1$ so that $1 < p-\gamma < n/(n-sp)$. By H\"older's inequality and \Cref{lem:alg-ineq-suff}, we have
\begin{equation*}
\begin{split}
I_1
&\leq CM_l(4\rho) \left( \int_{B_{7\rho/4}} \int_{B_{7\rho/4}} \max\lbrace v(x), v(y) \rbrace^{(p-\gamma)(p-1)} \left( \frac{|x-y|}{\rho} \right)^p \frac{\mathrm{d}y \,\mathrm{d}x}{|x-y|^{n+sp}} \right)^{\frac{1}{p}} \\
&\quad \times \left( \int_{B_{7\rho/4}} \int_{B_{7\rho/4}} \max\lbrace v(x), v(y) \rbrace^{\gamma-p} \frac{|v(x)-v(y)|^p}{|x-y|^{n+sp}} \,\mathrm{d}y \,\mathrm{d}x \right)^{\frac{p-1}{p}} \\
&\leq C\frac{M_l(4\rho)}{\rho^s} \left( \int_{B_{2\rho}} v^{(p-\gamma)(p-1)}(x) \,\mathrm{d}x \right)^{\frac{1}{p}} \left( \int_{B_{7\rho/4}} \int_{B_{7\rho/4}} \frac{|v^{\gamma/p}(x)-v^{\gamma/p}(y)|^p}{|x-y|^{n+sp}} \,\mathrm{d}y \,\mathrm{d}x \right)^{\frac{p-1}{p}}.
\end{split}
\end{equation*}
Since $0<(p-\gamma)(p-1) < (p-1)n/(n-sp)$ and $v_m^- = v$, \Cref{thm:bdry-WHI} applied to a weak supersolution $v$ shows that
\begin{equation*}
\int_{B_{2\rho}} v^{(p-\gamma)(p-1)}(x) \,\mathrm{d}x \leq C \rho^n \left( \winf_{B_{\rho}} v + \mathrm{Tail}(v_-; x_0, 4\rho) \right)^{(p-\gamma)(p-1)}.
\end{equation*}
Moreover, since $\gamma < p-1$, it follows from \Cref{lem:Caccio-supersoln} that
\begin{equation*}
\begin{split}
&\int_{B_{7\rho/4}} \int_{B_{7\rho/4}} \frac{|v^{\gamma/p}(x)-v^{\gamma/p}(y)|^p}{|x-y|^{n+sp}} \,\mathrm{d}y \,\mathrm{d}x \\
&\leq C\rho^{n-sp} \left( \fint_{B_{2\rho}} v^{\gamma}(x) \,\mathrm{d}x + \left( \fint_{B_{2\rho}} v^{\beta}(x) \,\mathrm{d}x \right) \, \mathrm{Tail}^{p-1} (v_-; x_0, 2\rho) \right),
\end{split}
\end{equation*}
where $\beta = \gamma-(p-1)$. We set $d = \mathrm{Tail}((u_{l, 4\rho})_-; x_0, 4\rho)$, then since
\begin{equation*}
\mathrm{Tail}(v_-; x_0, 2\rho) \leq \mathrm{Tail}((u_{l, 4\rho})_-; x_0, 2\rho) \leq d,
\end{equation*}
we have
\begin{equation*}
\begin{split}
\left( \fint_{B_{2\rho}} v^{\beta}(x) \,\mathrm{d}x \right) \, \mathrm{Tail}^{p-1} (v_-; x_0, 2\rho)
&\leq \left( \fint_{B_{2\rho}} v^{\gamma}(x) \,\mathrm{d}x \right) \, \frac{\mathrm{Tail}^{p-1} (v_-; x_0, 2\rho)}{d^{p-1}} \leq \fint_{B_{2\rho}} v^{\gamma}(x) \,\mathrm{d}x.
\end{split}
\end{equation*}
Applying \Cref{thm:bdry-WHI} yields
\begin{equation*}
\fint_{B_{2\rho}} v^{\gamma}(x) \,\mathrm{d}x \leq C \left( \winf_{B_{\rho}} v + \mathrm{Tail}(v_-; x_0, 4\rho) \right)^{\gamma}.
\end{equation*}
Therefore, we obtain
\begin{equation} \label{eq:suf-1}
\begin{split}
I_1
&\leq C M_l(4\rho) \rho^{n-sp} \left( \winf_{B_{\rho}} v + \mathrm{Tail}(v_-; x_0, 4\rho) \right)^{p-1} \\
&\leq C M_l(4\rho) \rho^{n-sp} (M_l(4\rho)-M_l(\rho)+\mathrm{Tail}((u_{l, 4\rho})_-; x_0, 4\rho))^{p-1},
\end{split}
\end{equation}
where $C = C(n, s, p, \Lambda) > 0$.

Let us next estimate $I_2$. Since $\varphi(x) \leq M_l(4\rho) \eta^p(x)$ and
\begin{equation*}
|u_{l, 4\rho}(x)-u_{l, 4\rho}(y)|^{p-2}(u_{l, 4\rho}(x)-u_{l, 4\rho}(y)) \leq C \left( u_{l, 4\rho}^{p-1}(x) + (u_{l, 4\rho})_-^{p-1}(y) \right),
\end{equation*}
we have
\begin{equation*}
\begin{split}
I_2
&\leq CM_l(4\rho) \int_{B_{7\rho/4}}\int_{\mathbb{R}^n \setminus B_{7\rho/4}} \left( u_{l, 4\rho}^{p-1}(x) + (u_{l, 4\rho})_-^{p-1}(y) \right) \eta^p(x) k(x,y) \,\mathrm{d}y\,\mathrm{d}x \\
&\leq CM_l(4\rho) \int_{B_{7\rho/4}} u_{l, 4\rho}^{p-1}(x) \int_{\mathbb{R}^n \setminus B_{7\rho/4}} \frac{|\eta(x)-\eta(y)|^p}{|x-y|^{n+sp}} \,\mathrm{d}y\,\mathrm{d}x \\
&\quad + CM_l(4\rho) \int_{B_{7\rho/4}} \eta^{p}(x) \int_{\mathbb{R}^n \setminus B_{7\rho/4}} \frac{(u_{l, 4\rho})_-^{p-1}(y)}{|x-y|^{n+sp}} \,\mathrm{d}y\,\mathrm{d}x.
\end{split}
\end{equation*}
By using similar arguments as in \eqref{eq:carre-du-champ} and \eqref{eq:tail-super}, and observing that $u_{l, 4\rho} \geq 0$ in $B_{4\rho}$, we arrive at
\begin{equation*}
I_2 \leq C M_l(4\rho) \rho^{n-sp} \left( \fint_{B_{2\rho}} u_{l, 4\rho}^{p-1}(x) \,\mathrm{d}x + \mathrm{Tail}^{p-1}((u_{l, 4\rho})_-; x_0, 4\rho) \right).
\end{equation*}
By applying \Cref{thm:bdry-WHI}, we conclude that
\begin{equation} \label{eq:suf-2}
I_2 \leq C M_l(4\rho) \rho^{n-sp} (M_l(4\rho)-M_l(\rho)+\mathrm{Tail}((u_{l, 4\rho})_-; x_0, 4\rho))^{p-1}.
\end{equation}
The desired result follows by combining \eqref{eq:suf-12}, \eqref{eq:suf-1}, and \eqref{eq:suf-2}.
\end{proof}

We are now ready to prove the sufficiency of the Wiener criterion.

\begin{proof}[Proof of the sufficient part of \Cref{thm:main}]
As explained in the beginning of this section, we assume to the contrary that \eqref{eq:L} holds and find a contradiction. By \Cref{lem:key}, we have
\begin{equation*}
\frac{\mathrm{cap}_{s, p}(D_{\rho}(x_0), B_{2\rho}(x_0))}{\rho^{n-sp}} \leq C \left( \frac{M_l(4\rho) - M_l(\rho) + \mathrm{Tail}((u_{l, 4\rho})_-; x_0, 4\rho)}{M_l(4\rho)} \right)^{p-1}
\end{equation*}
for any $\rho \in (0, r_{\ast}/4)$. Using this inequality and $M_l(4\rho) \geq L-l$, we derive an estimate for the Wiener integral:
\begin{equation} \label{eq:Wiener-I12}
\begin{split}
&\int_0^{r_{\ast}/4} \left( \frac{\mathrm{cap}_{s,p}(D_{\rho}(x_0), B_{2\rho}(x_0))}{\rho^{n-sp}} \right)^{\frac{1}{p-1}} \frac{\mathrm{d}\rho}{\rho} \\
&\leq C \int_0^{r_{\ast}/4} \frac{M_l(4\rho)-M_l(\rho)+\mathrm{Tail}((u_{l, 4\rho})_-; x_0, 4\rho)}{\rho \, M_l(4\rho)} \,\mathrm{d}\rho\\
&\leq C \left( \int_0^{r_{\ast}/4}\frac{M_l(4\rho)-M_l(\rho)}{\rho}\,\mathrm{d}\rho + \int_0^{r_{\ast}/4}\frac{\mathrm{Tail}((u_{l, 4\rho})_-; x_0, 4\rho)}{\rho}\,\mathrm{d}\rho \right) =: C(I_1 + I_2).
\end{split}
\end{equation}
By a change of variables and \Cref{thm:loc-bdd}, we have
\begin{equation} \label{eq:Wiener-I1}
I_1 = \int_{r_{\ast}/4}^{r_{\ast}} \frac{M_l(\rho)}{\rho}\,\mathrm{d}\rho \leq (\log 4) \left( \wsup_{B_{r_{\ast}}(x_0)} u + |l| \right) < +\infty.
\end{equation}
Thus, the rest part of the proof is devoted to the estimate of $I_2$.

Let $N = N(\rho) \in \mathbb{N}$ be the unique integer satisfying $4^{-N-1}r_{\ast} \leq \rho < 4^{-N}r_{\ast}$. We decompose the tail term by
\begin{equation*}
\mathrm{Tail}^{p-1}((u_{l, 4\rho})_-; x_0, 4\rho) \leq (4\rho)^{sp} \left( \int_{\mathbb{R}^n \setminus B_{r_{\ast}}(x_0)} \frac{(u_{l, 4\rho})_-^{p-1}}{|y-x_0|^{n+sp}} \,\mathrm{d}y + \sum_{j=1}^{N} \int_{A_j(x_0)} \frac{(u_{l, 4\rho})_-^{p-1}}{|y-x_0|^{n+sp}} \,\mathrm{d}y \right),
\end{equation*}
where $A_j(x_0)$ denotes the annulus $B_{4^{j+1}\rho}(x_0) \setminus B_{4^j\rho}(x_0)$. First of all, since $(u_{l, 4\rho})_- \leq |u| + |l|$, we have
\begin{equation} \label{eq:estimate1}
\rho^{sp} \int_{\mathbb{R}^n \setminus B_{r_{\ast}}(x_0)} \frac{(u_{l, 4\rho})_-^{p-1}}{|y-x_0|^{n+sp}} \,\mathrm{d}y \leq C \left( \frac{\rho}{r_{\ast}} \right)^{sp} \left( \mathrm{Tail}^{p-1}(u; x_0, r_{\ast}) + |l|^{p-1} \right).
\end{equation}
We next observe that $(u_{l, 4\rho})_- \leq M_l(4^{j+1}\rho) - M_l(4\rho)$ in $A_j(x_0)$. Thus, we obtain
\begin{equation} \label{eq:estimate2}
\rho^{sp} \int_{A_j(x_0)} \frac{(u_{l, 4\rho})_-^{p-1}}{|y-x_0|^{n+sp}} \,\mathrm{d}y \leq C 4^{-spj} \left( M_l(4^{j+1}\rho) - M_l(4\rho) \right)^{p-1}.
\end{equation}
By combining two estimates \eqref{eq:estimate1} and \eqref{eq:estimate2}, we have
\begin{equation*}
\begin{split}
\mathrm{Tail}^{p-1}((u_{l, 4\rho})_-; x_0, 4\rho)
&\leq C \left( \frac{\rho}{r_{\ast}} \right)^{sp} \left( \mathrm{Tail}^{p-1}(u; x_0, r_{\ast}) + |l|^{p-1} \right) \\
&\quad + C \sum_{j=1}^{N} 4^{-spj} \left( M_l(4^{j+1}\rho) - M_l(4\rho) \right)^{p-1}.
\end{split}
\end{equation*}
To proceed, we use \cite[Lemma 3 (b)]{Dyd06}: for every $t > 0$ and $\beta > 1$ there exists a constant $C > 0$ such that for any sequence $\lbrace a_j \rbrace$ of nonnegative numbers, we have
\begin{equation} \label{eq:Dyd06}
\left( \sum_{j=1}^{\infty} a_j \right)^{t} \leq C \sum_{j=1}^{\infty} \beta^j a_j^t.
\end{equation}
An application of \eqref{eq:Dyd06} with $t = 1/(p-1)$ and $\beta = 2^{\frac{sp}{p-1}} > 1$ yields
\begin{equation*}
\mathrm{Tail}((u_{l, 4\rho})_-; x_0, 4\rho) \leq C \left( \frac{\rho}{r_{\ast}} \right)^{\frac{sp}{p-1}} (\mathrm{Tail}(u; x_0, r_{\ast}) + |l|) + C \sum_{j=1}^{N} \frac{M_l(4^{j+1}\rho) - M_l(4\rho)}{2^{\frac{sp}{p-1}j}},
\end{equation*}
and hence
\begin{equation*}
I_2 \leq C(\mathrm{Tail}(u; x_0, r_{\ast}) + |l|) + C \int_0^{r_{\ast}/4} \sum_{j=1}^{\infty} \left( 2^{-\frac{sp}{p-1}j} \frac{M_l(4^{j+1}\rho) - M_l(4\rho)}{\rho} {\bf 1}_{\lbrace 4^j \rho < r_{\ast} \rbrace} \right) \mathrm{d}\rho.
\end{equation*}
By using Fubini's theorem and a change of variables, we have
\begin{equation*}
\begin{split}
&\int_0^{r_{\ast}/4} \sum_{j=1}^{\infty} \left( 2^{-\frac{sp}{p-1}j} \frac{M_l(4^{j+1}\rho) - M_l(4\rho)}{\rho} {\bf 1}_{\lbrace 4^j \rho < r_{\ast} \rbrace} \right) \mathrm{d}\rho \\
&= \sum_{j=1}^{\infty} \left( 2^{-\frac{sp}{p-1}j} \int_0^{r_{\ast}/4^j} \frac{M_l(4^{j+1}\rho) - M_l(4\rho)}{\rho} \,\mathrm{d}\rho \right) \\
&\leq \sum_{j=1}^{\infty} \left( 2^{-\frac{sp}{p-1}j} \int_{r_{\ast}/4^j}^{r_{\ast}} \frac{M_l(4\rho)}{\rho}  \,\mathrm{d}\rho \right) \\
&\leq (\log 4) \left( \sum_{j=1}^{\infty} j 2^{-\frac{sp}{p-1}j} \right) \left( \wsup_{B_{4r_{\ast}}(x_0)} u+|l| \right) \\
&\leq C \left( \wsup_{B_{4r_{\ast}}(x_0)} u+|l| \right),
\end{split}
\end{equation*}
and consequently,
\begin{equation} \label{eq:Wiener-I2}
I_2 \leq C \left( \mathrm{Tail}(u; x_0, r_{\ast}) + \wsup_{B_{4r_{\ast}}(x_0)} u + |l| \right) < +\infty.
\end{equation}
Note that the left-hand side of \eqref{eq:Wiener-I12} diverges by the assumption of the sufficient condition, while the right-hand side is finite by \eqref{eq:Wiener-I1} and \eqref{eq:Wiener-I2}. This leads us to a contradiction, and hence the first inequality in \eqref{eq:contrad} holds. The second inequality in \eqref{eq:contrad} can be proved in the same way, so we conclude \eqref{eq:limit}.
\end{proof}


\section{The necessary condition for the regularity of a boundary point} \label{sec:necessary}


In this section, the necessary condition is established for a boundary point to be regular with respect to $\mathcal{L}$. We develop a dual approach to $(s, p)$-capacity as a nonlocal analog of the approach in \cite{KM94}.

Let $\Omega$ be a bounded open set and $K \subset \Omega$ a compact set. Let $u = \mathfrak{R}(K, \Omega)$ be the $\mathcal{L}$-potential of $K$ in $\Omega$. Then, we recall that, by \Cref{lem:potential} (iv), $u$ is a weak supersolution of $\mathcal{L}u = 0$ in $\Omega$, meaning that $\mathcal{E}^k(u, \varphi) \geq 0$ for every nonnegative function $\varphi \in C_c^{\infty}(\Omega)$. Thus, there exists a unique nonnegative Radon measure $\mu_u$ in $\Omega$ such that $\mathcal{L}u=\mu_u$ in $\Omega$ in the sense of distribution, that is,
\begin{equation*}
\langle \mu_u, \varphi \rangle := \int_{\Omega} \varphi \,\mathrm{d}\mu_u =\mathcal{E}^k(u, \varphi)
\end{equation*}
for every $\varphi \in C_c^{\infty}(\Omega)$. We call this measure $\mu_u$ the \textit{$\mathcal{L}$-distribution of $K$ with respect to $\Omega$}. Again, by \Cref{lem:potential} (i), we have $u \in W_0^{s, p}(\Omega)$, which implies that  $\mu_u \in (W^{s, p}_0(\Omega))^{\ast}$.

\begin{lemma}\label{mecap}
Let $\Omega$ be a bounded open set and $K \subset \Omega$ be a compact set. If $u = \mathfrak{R}(K, \Omega)$ is the $\mathcal{L}$-potential of $K$ in $\Omega$ with the $\mathcal{L}$-distribution $\mu_u$, then
	\begin{align*}
	\mu_u(E) \leq \Lambda \mathrm{cap}_{s,p} (K \cap E, \Omega)
	\end{align*}
	for every compact set $E \subset \Omega$.
\end{lemma}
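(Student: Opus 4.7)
The plan unfolds in three steps. First I would pin down the support of $\mu_u$: since $u = \mathfrak{R}(K,\Omega)$ is $\mathcal{L}$-harmonic in $\Omega \setminus K$, the defining relation $\int_{\Omega} \psi\,\mathrm{d}\mu_u = \mathcal{E}^k(u,\psi) = 0$ holds for every $\psi \in C_c^\infty(\Omega \setminus K)$, so $\operatorname{supp}\mu_u \subset K$ and $\mu_u(E) = \mu_u(K \cap E)$. It therefore suffices to test $\mu_u$ against a function that dominates $\mathbf{1}_{K \cap E}$ on $K$.

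Second, I would test against an efficient admissible competitor for $\mathrm{cap}_{s,p}(K \cap E, \Omega)$. Via the variant of \Cref{lem:potential}(ii) applied with the standard kernel $k(x,y) = |x-y|^{-n-sp}$, together with an $L^\infty$-truncation, one can select $\varphi \in C_c^\infty(\Omega)$ with $0 \leq \varphi \leq 1$, $\varphi \equiv 1$ on some open neighborhood of $K \cap E$, and $\mathcal{E}^{s,p}(\varphi)$ arbitrarily close to $\mathrm{cap}_{s,p}(K \cap E, \Omega)$. Since $\varphi \geq \mathbf{1}_{K \cap E}$ on $\operatorname{supp}\mu_u$ and $\mu_u$ is nonnegative,
\[
\mu_u(E) = \mu_u(K \cap E) \leq \int_{\Omega} \varphi\,\mathrm{d}\mu_u = \mathcal{E}^k(u,\varphi).
\]

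The last and most delicate step is to bound $\mathcal{E}^k(u,\varphi)$ linearly in $\mathcal{E}^{s,p}(\varphi)$ with constant $\Lambda$. Combining the pointwise bound $|a|^{p-2}a \cdot b \leq |a|^{p-1}|b|$, the upper ellipticity $k(x,y) \leq \Lambda|x-y|^{-n-sp}$, and H\"older's inequality with exponents $p/(p-1)$ and $p$ gives the \emph{naive} estimate
\[
\mathcal{E}^k(u,\varphi) \leq \Lambda\,\mathcal{E}^{s,p}(u)^{(p-1)/p}\,\mathcal{E}^{s,p}(\varphi)^{1/p}.
\]
The main obstacle is the extra factor $\mathcal{E}^{s,p}(u)^{(p-1)/p}$, which is of order $\mathrm{cap}_{s,p}(K,\Omega)^{(p-1)/p}$ and can be much larger than $\mathrm{cap}_{s,p}(K \cap E,\Omega)^{(p-1)/p}$, so the naive bound falls short. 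To absorb this factor into $\Lambda$, I would specialize $\varphi$ to the $\mathcal{L}$-potential $w := \mathfrak{R}(K \cap E, \Omega)$ and exploit the asymmetric identity
\[
\mathcal{E}^k(w,u) = \mathcal{E}^k(w) = \mathcal{C}^k(K \cap E,\Omega),
\]
obtained by testing the $\mathcal{L}$-harmonicity of $w$ on $\Omega \setminus (K \cap E)$ against the admissible variation $u - w$ (which vanishes on $K \cap E$ and outside $\Omega$, hence lies in $W_0^{s,p}(\Omega \setminus (K \cap E))$), combined with the equivalence $\mathcal{C}^k \leq \Lambda\,\mathrm{cap}_{s,p}$ that follows from \eqref{eq:ellipticity}. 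The technical heart of the argument is to pass from the identity on $\mathcal{E}^k(w,u)$ to a control on $\mathcal{E}^k(u,w)$ for $p \neq 2$, which requires a careful monotonicity inequality for the nonlinear $p$-form; in the symmetric linear case $p = 2$ the two quantities coincide and the conclusion is immediate.
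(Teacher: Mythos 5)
Your first two steps are fine and match the natural setup: $\operatorname{supp}\mu_u\subset K$, so $\mu_u(E)\leq\int_\Omega\varphi\,\mathrm{d}\mu_u=\mathcal{E}^k(u,\varphi)$ for any admissible $\varphi$ equal to $1$ near $K\cap E$. But the final step --- which you yourself label ``the technical heart'' --- is exactly where the argument breaks, and you leave it unproven. For $p\neq 2$ the form $\mathcal{E}^k$ is genuinely asymmetric: $\mathcal{E}^k(u,w)$ carries the nonlinearity $|u(x)-u(y)|^{p-2}(u(x)-u(y))$ on $u$, while $\mathcal{E}^k(w,u)$ carries it on $w$, and there is no monotonicity inequality converting one into the other for a general pair of functions. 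What you need is an upper bound on $\mathcal{E}^k(u,w)=\int w\,\mathrm{d}\mu_u\geq\mu_u(E)$; what your asymmetric identity produces is $\mathcal{E}^k(w,u)=\mathcal{E}^k(w)$, and any attempt to pass between the two via H\"older reintroduces the factor $\mathcal{E}^k(u)^{(p-1)/p}\sim\mathrm{cap}_{s,p}(K,\Omega)^{(p-1)/p}$ that you correctly identified as fatal. So the proposal, as it stands, does not prove the lemma for $p\neq 2$.

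The paper's resolution sidesteps the asymmetry by changing which function sits in the nonlinear slot. Restrict the measure rather than the potential: set $\nu:=\mu_u|_E\in(W_0^{s,p}(\Omega))^\ast$ and let $v\in W_0^{s,p}(\Omega)$ solve $\mathcal{L}v=\nu$ via \Cref{thm:DP}, so that $0\leq v\leq u\leq 1$ by comparison. Then $\mu_u(E)=\nu(E)=\int_\Omega\varphi\,\mathrm{d}\nu=\mathcal{E}^k(v,\varphi)$ with the \emph{small} function $v$ in the nonlinear slot, and Young's inequality gives $\mathcal{E}^k(v,\varphi)\leq\frac{p-1}{p}\mathcal{E}^k(v,v)+\frac{1}{p}\mathcal{E}^k(\varphi,\varphi)$. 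The energy $\mathcal{E}^k(v,v)$ is then absorbed by testing with $\overline\varphi:=\max\{\varphi,v\}$: since $\overline\varphi-\varphi\in W_0^{s,p}(\Omega\setminus(K\cap E))$ where $v$ is a weak solution, and $\overline\varphi-v\geq 0$ lies in $W_0^{s,p}(\Omega)$ where $v$ is a weak supersolution, one gets $\mathcal{E}^k(v,\varphi-v)\geq 0$, i.e.\ $\mathcal{E}^k(v,v)\leq\mathcal{E}^k(v,\varphi)=\mu_u(E)$. Combining the two displays yields $\mu_u(E)\leq\mathcal{E}^k(\varphi,\varphi)\leq\Lambda\,\mathcal{E}^{s,p}(\varphi)$, and taking the infimum over admissible $\varphi$ finishes. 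If you want to salvage your outline, this auxiliary solution $v$ for the restricted measure is the missing idea.
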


\begin{proof}
Let $\nu$ be the measure obtained by restricting the $\mathcal{L}$-distribution $\mu_u$ to the compact set $E$. We know that $\mu_u, \nu \in (W^{s, p}_0(\Omega))^{\ast}$. Let $v \in W^{s, p}_0(\Omega)$ be the unique weak solution of $\mathcal{L}v=\nu$ in $\Omega$ guaranteed by \Cref{thm:DP}. Then, $0 \leq v \leq u \leq 1$ a.e. in $\mathbb{R}^n$ by \Cref{thm:comparison}.

	Let $\varphi \in C_c^{\infty}(\Omega)$ be such that $\varphi \equiv 1$ on the compact set $K\cap E$. Since $\mu_u$ and $\nu$ are supported on $K$ and $K \cap E$, respectively, we obtain
	\begin{equation} \label{munu2}
	\mu_u(E) = \nu(E) = \int_{\Omega}\varphi \,\mathrm{d}\nu = \mathcal{E}^k(v, \varphi) \leq \frac{p-1}{p} \mathcal{E}^k(v, v) + \frac{1}{p} \mathcal{E}^{k}(\varphi, \varphi),
	\end{equation}
	where we used Young's inequality in the last inequality. To estimate $\mathcal{E}^k(v, v)$, we consider a test function 
	\begin{equation*}
	\overline{\varphi}(x):=\max \lbrace\varphi(x), v(x) \rbrace \in W_0^{s,p}(\Omega).
	\end{equation*}
Note that $\overline{\varphi} \equiv \varphi \equiv 1$ on $K \cap E$ and $\overline{\varphi} - \varphi \in W_{0}^{s,p}(\Omega \setminus (K\cap E))$. Moreover, by definition of $\overline{\varphi}$, it is immediate that $\overline{\varphi}-v \in W_{0}^{s,p}(\Omega)$ and that $\overline{\varphi}-v$ is nonnegative in $\Omega$.
	
	Recalling that $v$ is a weak supersolution of $\mathcal{L}v = 0$ in $\Omega$ and a weak solution of $\mathcal{L}v = 0$ in $\Omega \setminus (K\cap E)$, we have
	\begin{align*}
	\mathcal{E}^{k}(v, \varphi-v)=\mathcal{E}^{k}(v, \varphi-\overline{\varphi})+\mathcal{E}^{k}(v, \overline{\varphi}-v)\geq 0.
	\end{align*}
	Hence, we obtain
	\begin{align}\label{munu3}
	\mathcal{E}^k(v, v) \leq \mathcal{E}^k(v, \varphi).
	\end{align}
	By combining two estimates \eqref{munu2} and \eqref{munu3}, we conclude that
	\begin{align*}
	\mu_u(E) \leq \mathcal{E}^k(\varphi, \varphi) \leq \Lambda \int_{\mathbb{R}^n}\int_{\mathbb{R}^n}\frac{|\varphi(x)-\varphi(y)|^{p}}{|x-y|^{n+sp}} \,\mathrm{d}y \,\mathrm{d}x
	\end{align*}
	for any $\varphi \in C_c^{\infty}(\Omega)$ with $\varphi \equiv 1$ on $K \cap E$. We complete the proof by taking the infimum over all such $\varphi$.
\end{proof}

The necessary part of the proof of \Cref{thm:main} is based on pointwise estimates of $\mathcal{L}$-superharmonic functions in terms of Wolff potentials. We recall from \cite{KMS15} the definition of the Wolff potential with fractional orders.

\begin{definition}[Wolff potential]
Let $s \in (0,1)$ and $p \in (1, \infty)$. Let $\mu$ be a nonnegative Borel measure with finite total mass on $\mathbb{R}^n$. The {\it Wolff potential ${\bf W}^{\mu}_{s, p}$ of the measure $\mu$} is defined by 
	\begin{align*}
	{\bf W}^{\mu}_{s, p}(x_0, r) := \int_0^r \left( \frac{\mu(B_{\rho}(x_0))}{\rho^{n-sp}}\right)^{\frac{1}{p-1}} \frac{\mathrm{d}\rho}{\rho}, 
	\end{align*}
	whenever $x_0 \in \mathbb{R}^n$ and $0<r\leq \infty$.
\end{definition}

We now present a pointwise estimate for nonnegative $\mathcal{L}$-superharmonic functions in $\Omega$ in terms of the Wolff potential. It will be stated in a slightly different context, compared to the potential estimate established in \cite{KMS15}. Note that estimates in \cite{KMS15} are obtained for SOLA (Solutions Obtained as Limits of Approximations), which requires $p > 2-s/n$. Our alternative approach is more suitable for proving the necessity of the Wiener criterion for all ranges of $p \in (1, n/s]$; see also \cite{AK04,CGZG20,KM92,KM94,KK10,TW02} for similar results concerning local operators.

\begin{theorem} \label{thm:Wolff}
Assume that $p \in (1, n/s]$. Let $u$ be an $\mathcal{L}$-superharmonic function in $B_{4r}(x_0)$ which is nonnegative in $B_{4r}(x_0)$. If $\mu = \mathcal{L}u$ exists, then
\begin{equation*}
u(x_0) \leq C \left( \inf_{B_{r}(x_0)}u + {\bf W}_{s,p}^{\mu}(x_0, 2r) + \mathrm{Tail}(u; x_0, r) \right)
\end{equation*}
for some $C = C(n, s, p, \Lambda) > 0$.
\end{theorem}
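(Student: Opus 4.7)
The proof follows the iteration scheme of Kilpel\"ainen--Mal\'y, adapted to the nonlocal framework; the principal new difficulty is the careful bookkeeping of the nonlocal tail throughout the iteration. We begin with a truncation reduction: the functions $u_k := \min\{u, k\}$ are bounded and still $\mathcal{L}$-superharmonic, hence weak supersolutions by \Cref{thm:KKP17}. Denoting their $\mathcal{L}$-distributions by $\mu_k$, one has $\mu_k \to \mu$ weakly, so it is enough to establish the estimate for each $u_k$ uniformly in $k$ and pass to the limit. We may therefore assume $u$ is a bounded nonnegative weak supersolution of $\mathcal{L}u = \mu$ in $B_{4r}(x_0)$.

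Fix $\sigma \in (0, 1/4)$ (to be chosen later), set $r_j = \sigma^j r$, $B_j := B_{r_j}(x_0)$, and $\tau_j := (\mu(B_j)/r_j^{n-sp})^{1/(p-1)}$. Since $u(x_0) = \lim_j \inf_{B_j} u$ by \Cref{thm:KKP17} and the lower semicontinuity of $u$, the theorem reduces to a recursive estimate of the form
\begin{equation*}
\inf_{B_{j+1}} u \;\leq\; \inf_{B_j} u \,+\, C\bigl(\tau_j + \varepsilon_j\bigr),
\end{equation*}
with $\sum_j \varepsilon_j \lesssim \mathrm{Tail}(u; x_0, r)$. Summing telescopically and comparing $\sum_j \tau_j$ with the Wolff integral at scale $2r$ would then yield
\begin{equation*}
u(x_0) \;\leq\; \inf_{B_r(x_0)} u + C\, \mathbf{W}_{s,p}^{\mu}(x_0, 2r) + C\, \mathrm{Tail}(u; x_0, r),
\end{equation*}
which is the claimed bound.

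To establish the recursion at scale $j$, choose a smooth cutoff $\eta_j$ supported in $B_j$ with $\eta_j \equiv 1$ on $B_{j+1}$, and test the weak supersolution inequality against $\varphi_j = (c_j - u)_+ \eta_j^p$, where $c_j$ is a level slightly exceeding $\inf_{B_{j+1}} u$. Estimating the resulting bilinear form from below by the fractional Dirichlet energy of $(c_j - u)_+$ on $B_{j+1}$, applying the fractional Sobolev embedding, and combining with the interior weak Harnack inequality of \Cref{thm:interior-WHI} applied to the nonnegative function $u - \inf_{B_j} u$ on $B_j$, we arrive at a dichotomy: either $c_j$ can be chosen so that $c_j - \inf_{B_{j+1}} u$ is dominated by $\tau_j$ plus a tail, giving the recursion directly, or a contradiction forces $\inf_{B_{j+1}} u$ to lie below the intended bound. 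Optimizing over admissible $c_j$ produces the stated recursion, with $\varepsilon_j$ expressed as a tail contribution at scale $r_j$.

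The principal obstacle, absent in the local case, is the sequence of tail terms $\mathrm{Tail}((u - c_j)_-; x_0, r_j)$ that enter every Caccioppoli and weak Harnack estimate at scale $r_j$. We dissipate them by decomposing the tail integral into the dyadic annuli $B_{j-k-1} \setminus B_{j-k}$ for $k = 0, \dots, j-1$, together with the exterior $\mathbb{R}^n \setminus B_0$. On the $k$-th annulus, the bound $(u - c_j)_- \leq c_j - \inf_{B_{j-k-1}} u$ combined with the scaling factor $r_j^{sp}/|y - x_0|^{n+sp} \lesssim r_j^{sp}/r_{j-k-1}^{n+sp}$ produces a geometric weight $\sigma^{spk}$; the $(p-1)$-th root inequality of \cite[Lemma~3(b)]{Dyd06} then converts this into the summable factor $\sigma^{spk/(p-1)}$, whose contribution can be reabsorbed into the left-hand side of the iteration via Young's inequality after a careful choice of $\sigma$. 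The exterior piece is controlled directly by $\mathrm{Tail}(u; x_0, r)$. This tail bookkeeping is the crucial nonlocal ingredient that makes the adaptation of the Kilpel\"ainen--Mal\'y scheme go through for all $p \in (1, n/s]$.
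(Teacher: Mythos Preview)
Your tail bookkeeping in the final paragraph is essentially correct and matches what the paper does; the problem lies earlier, in the core recursion step.

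Testing the equation $\mathcal{L}u=\mu$ with $\varphi_j=(c_j-u)_+\eta_j^p$ puts the measure on the wrong side for an \emph{upper} Wolff bound. In the local model one has
\[
\int |\nabla u|^{p-2}\nabla u\cdot\nabla\bigl((c_j-u)_+\eta_j^p\bigr)
=-\int_{\{u<c_j\}}|\nabla u|^p\eta_j^p+\text{(lower order)}=\int(c_j-u)_+\eta_j^p\,\mathrm{d}\mu\ge 0,
\]
so after absorbing the cross term one obtains a Caccioppoli inequality for $(c_j-u)_+$ in which the contribution of $\mu$ is \emph{subtracted} from the right-hand side and hence simply dropped. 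The nonlocal computation behaves the same way. Consequently the ``dichotomy'' you describe never materializes: the inequality you reach contains no $\mu(B_j)$ term at all, and there is nothing that forces the step $\inf_{B_{j+1}}u-\inf_{B_j}u$ to be controlled by $\tau_j$. The test function $(c_j-u)_+$ is the one used for the \emph{lower} pointwise Wolff estimate; for the upper bound one needs a test function that is \emph{increasing} in $u$, so that $\int\varphi\,\mathrm{d}\mu\le\mu(B)$ appears as an upper bound for (a positive multiple of) an energy.

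The paper follows Kilpel\"ainen--Mal\'y more closely than your sketch: it does not iterate infima, but instead defines levels $a_j$ by integral averages,
\[
a_{j+1}=a_j+\delta^{-1}\Bigl(\,\fint_{B_{j+1}}(u-a_j)_+^\gamma\,\mathrm{d}x\Bigr)^{1/\gamma},
\]
and proves an auxiliary reverse-type inequality (\Cref{auxest}) obtained by testing with $v=\bigl(1-(1+u_+/d)^{1-\tau}\bigr)\eta^p$ for $\tau=\gamma/(p-1)>1$. This choice makes $\mathcal{E}^k(u,v)=\int v\,\mathrm{d}\mu\le\mu(B_{2r})$ sit on the correct side, and the definition of $a_j$ guarantees the smallness hypothesis $|B_{2r}\cap\{u>a\}|<\tfrac12 d^{-\gamma}\int_{B_r}(u-a)_+^\gamma$ needed there. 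The recursion that results controls $d_j:=a_{j+1}-a_j$ in terms of $\tau_j$, earlier increments, and tails; these are then summed using the same annular decomposition and the Dyda lemma you invoke. Your outline would be repaired by replacing $(c_j-u)_+$ with the above increasing test function and by iterating the $a_j$ rather than $\inf_{B_j}u$.
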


Note that the value $u(x_0)$ in \Cref{thm:Wolff} is defined as in \Cref{thm:KKP17}. To prove \Cref{thm:Wolff}, we need the following auxiliary estimate.

\begin{lemma}\label{auxest}
Assume that $p \in (1, n/s]$. Let $u$ be an $\mathcal{L}$-superharmonic function in $B_{2r}(x_0)$ and suppose that $\mu = \mathcal{L}u$ exists. Assume that
\begin{equation} \label{eq:gamma}
p-1 < \gamma < \frac{n(p-1)}{n-s(p-1)},
\end{equation}
then there exists $C = C(n, s, p, \Lambda, \gamma) > 0$ such that, for any $a \in \mathbb{R}$ and $d > 0$,
\begin{equation} \label{eq:auxest}
\begin{split}
\left( d^{-\gamma} \fint_{B_r(x_0)} (u(x)-a)_+^{\gamma} \,\mathrm{d}x \right)^{p/q}
&\leq C d^{-\gamma} \fint_{B_{2r}(x_0)} (u(x)-a)_+^{\gamma} \,\mathrm{d}x + C d^{1-p} \frac{\mu(B_{2r}(x_0))}{r^{n-sp}} \\
&\quad + C d^{1-p} \mathrm{Tail}^{p-1}((u-a)_+; x_0, 2r),
\end{split}
\end{equation}
provided that
\begin{equation} \label{eq:hypothesis}
|B_{2r}(x_0) \cap \lbrace u > a \rbrace| < \frac{1}{2} d^{-\gamma} \int_{B_r(x_0)} (u(x)-a)_+^{\gamma} \,\mathrm{d}x,
\end{equation}
where $q = p\gamma/(p-\gamma/(p-1))$.
\end{lemma}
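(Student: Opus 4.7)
The strategy is a reverse-H\"older-type argument obtained by combining a tailored Caccioppoli estimate with the fractional Sobolev inequality, in the spirit of the self-improving schemes of \cite{KM94,KMS15}. Abbreviate $B_\rho = B_\rho(x_0)$, $w = (u-a)_+$, $w_d = \min\{w,d\}$, and let $\eta \in C_c^\infty(B_{3r/2})$ satisfy $\eta \equiv 1$ on $B_r$, $0 \leq \eta \leq 1$, $|\nabla\eta|\leq C/r$. The shift $u \mapsto u-a$ is $\mathcal{L}$-superharmonic and has the same $\mathcal{L}$-distribution $\mu$, so throughout I may replace $u$ by $u-a$ without changing $\mu$ or the tail. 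Since $\mu = \mathcal{L}u$ exists, we have $\mathcal{E}^k(u,\varphi) = \int \varphi\,\mathrm{d}\mu$ for every $\varphi \in C_c^\infty(B_{2r})$, and by $W^{s,p}_0$-approximation for a wider class of $\varphi \geq 0$.

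\textbf{Step 1 (Caccioppoli with measure datum).} The plan is to test the equation with a nonnegative function of the form
\begin{equation*}
\varphi_\varepsilon = \eta^p\bigl[(d+\varepsilon - w_d)^{1-p} - (d+\varepsilon)^{1-p}\bigr],
\end{equation*}
which is bounded and belongs to $W^{s,p}_0(B_{2r})$. A discrete chain-rule calculation on the nonlocal increments $(u(x)-u(y))$ tested against $\varphi_\varepsilon(x)-\varphi_\varepsilon(y)$, combined with the algebraic identities underlying \Cref{lem:Caccio-supersoln} (applied to the negative exponent $\beta = -(p-1) + $ a shift), yields after sending $\varepsilon \to 0$ a Caccioppoli-type bound of the form
\begin{equation*}
\int_{B_{3r/2}}\int_{B_{3r/2}} \frac{|G(x)-G(y)|^p}{|x-y|^{n+sp}}\,\mathrm{d}y\,\mathrm{d}x \leq C\,d^{1-p}\,\mu(B_{2r}) + C\,r^{n-sp} + C\,r^{n-sp}\,d^{1-p}\,\mathrm{Tail}^{p-1}(w;x_0,2r),
\end{equation*}
where $G$ is a suitable power of $w_d$. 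The tail term emerges from the long-range interaction $\int_{B_{3r/2}}\!\int_{\mathbb{R}^n\setminus B_{3r/2}}(\cdot)$ exactly as in \eqref{eq:tail-super}, after using $(u-a)_-^{p-1}(y) \leq C\,w(y)^{p-1}$ modulo constants, and after exploiting that $\varphi_\varepsilon(x)\leq C d^{1-p} \eta^p(x)$.

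\textbf{Step 2 (Sobolev upgrade and exponent matching).} The plan is then to feed the Dirichlet energy from Step 1 into the fractional Sobolev inequality applied to $\eta G$, producing
\begin{equation*}
\biggl(\fint_{B_r} w_d^{\,q}\,\mathrm{d}x\biggr)^{p/q} \leq C\,r^{sp}\,d^{1-p}\,\frac{\mu(B_{2r})}{r^{n-sp}} + C\,r^{sp}\,d^{1-p}\,\frac{\mathrm{Tail}^{p-1}(w;x_0,2r)}{1} + C\,d^{p-1-\gamma}\fint_{B_{2r}} w_d^\gamma\,\mathrm{d}x,
\end{equation*}
where the exponent $q$ is read off from matching the Sobolev exponent $p^*=np/(n-sp)$ with the power structure of $G$. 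A direct computation shows that the value $q = p\gamma/(p-\gamma/(p-1))$ is forced, and the range $p-1 < \gamma < n(p-1)/(n-s(p-1))$ is precisely what keeps $q$ finite and positive.

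\textbf{Step 3 (using the density hypothesis).} To convert the $w_d$-integrals into $w$-integrals, split
\begin{equation*}
\int_{B_r} w^\gamma\,\mathrm{d}x = \int_{B_r\cap\{w<d\}} w_d^{\,\gamma}\,\mathrm{d}x + \int_{B_r\cap\{w\geq d\}} w^\gamma\,\mathrm{d}x.
\end{equation*}
On the sub-level set $\{w<d\}$ one has $w=w_d$, so H\"older's inequality with exponents $q/\gamma$ and $q/(q-\gamma)$ bounds this piece by $|B_{2r}\cap\{w>0\}|^{1-\gamma/q}$ times an $L^q$-integral of $w_d$, which is then plugged into Step 2. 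On $\{w\geq d\}$ one uses $w^\gamma \leq d^\gamma\,(w/d)^q$ and again the $L^q$ bound. All occurrences of $|B_{2r}\cap\{w>0\}|$ are now handled by the hypothesis \eqref{eq:hypothesis}, which reads
\begin{equation*}
d^\gamma\,|B_{2r}\cap\{w>0\}| < \tfrac12 \int_{B_r} w^\gamma\,\mathrm{d}x,
\end{equation*}
allowing any $d^\gamma|B_{2r}\cap\{w>0\}|$-factor on the right-hand side to be absorbed into the left. Rearranging and dividing through by $d^\gamma|B_r|$ yields \eqref{eq:auxest}.

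\textbf{Main obstacle.} The principal technical point is the simultaneous balance of three constraints on the test function: (i) the algebraic inequality applied to $|u(x)-u(y)|^{p-2}(u(x)-u(y))(\varphi_\varepsilon(x)-\varphi_\varepsilon(y))$ must produce a positive nonlocal Dirichlet energy of some power of $d-w_d$ (this is exactly where the sign restriction $\beta < 0$ in \Cref{lem:Caccio-supersoln} is used); (ii) the resulting Sobolev target exponent must be the critical $q$; and (iii) the nonlocal tail on the complement of $B_{2r}$, which pairs $w$ inside with $w$ outside, must collapse to the single quantity $d^{1-p}\mathrm{Tail}^{p-1}(w;x_0,2r)$ on the right of \eqref{eq:auxest}. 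The narrow window $p-1<\gamma<n(p-1)/(n-s(p-1))$ is the one for which all three conditions can be met consistently; outside of this range either the Sobolev exponent degenerates or the absorption step fails. The density assumption \eqref{eq:hypothesis} is not a mere convenience but is the mechanism that keeps the inequality non-trivial despite $p/q<1$, and thus is what will drive the subsequent iteration step in the Wolff-potential proof of \Cref{thm:Wolff}.
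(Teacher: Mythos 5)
Your overall architecture (a Caccioppoli estimate for a nonlinear composition of $u$, the fractional Sobolev inequality to reach $L^q$, and absorption of the level-set measure via \eqref{eq:hypothesis}) is the same as the paper's, but two of your concrete choices break essential steps. First, the test function $\varphi_\varepsilon=\eta^p[(d+\varepsilon-w_d)^{1-p}-(d+\varepsilon)^{1-p}]$ does not satisfy $\varphi_\varepsilon\le Cd^{1-p}\eta^p$: since $d+\varepsilon-w_d$ can be as small as $\varepsilon$, one only has $\varphi_\varepsilon\le\varepsilon^{1-p}\eta^p$, so the pairing $\int\varphi_\varepsilon\,\mathrm{d}\mu$ blows up like $\varepsilon^{1-p}\mu(B_{2r})$ as $\varepsilon\to0$ and cannot produce the clean term $Cd^{1-p}\mu(B_{2r})/r^{n-sp}$; the same unbounded factor contaminates your tail estimate. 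The paper instead tests with $v=\bigl(1-(1+u_+/d)^{1-\tau}\bigr)\eta^p$, $\tau=\gamma/(p-1)$, which is bounded by $\eta^p$ precisely because $\gamma>p-1$ forces $\tau>1$; the factor $d^{1-p}$ then comes from rescaling $u\mapsto u/d$ in the energy, not from the size of the test function.

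Second, and more fundamentally, truncating at level $d$ loses exactly the information the left-hand side of \eqref{eq:auxest} measures. Under \eqref{eq:hypothesis} one has $|B_r\cap\{0<w<d\}|\le\frac12 d^{-\gamma}\int_{B_r}w^\gamma$, so at least half of $d^{-\gamma}\int_{B_r}w^\gamma$ is carried by $\{w\ge d\}$, where $w_d\equiv d$. No $L^q$ bound on $w_d$ can therefore control $\int_{B_r\cap\{w\ge d\}}w^\gamma$, and your Step 3 substitution ``$w^\gamma\le d^{\gamma-q}w^q$ and again the $L^q$ bound'' invokes an $L^q$ bound on $w$ itself that your Step 2 does not supply. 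The repair is to work with an \emph{unbounded} Sobolev function: the paper takes $G(u_+/d)$ with $G(t)=\frac{q}{\gamma}\bigl((1+t)^{\gamma/q}-1\bigr)$, so that $G^q\gtrsim(u_+/d)^{\gamma}$ on $\{u_+\ge d\}$ and the $L^q$ norm of $G$ genuinely dominates $d^{-\gamma}\fint_{B_r}u_+^{\gamma}$, while the companion bounded function $g$ in the test function is matched to $G$ through the discrete chain-rule inequality of \Cref{lem:alg-ineq-wolff}. The constraint \eqref{eq:gamma} then encodes exactly $p<q<np/(n-sp)$ together with the boundedness of $g$, which is the balance you describe in your ``main obstacle'' paragraph but which your truncated test function cannot realize.
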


\begin{proof}
It is enough to prove the lemma for locally bounded function $u$. Indeed, an $\mathcal{L}$-superharmonic function $u$ can be approximated by a sequence of bounded $\mathcal{L}$-superharmonic functions $u_j := \min\lbrace u, j \rbrace$, $j \geq 1$. By the argument as in the beginning of this section, there exists a unique nonnegative Radon measure $\mu_{u_j}$ in $\Omega$ such that $\mathcal{L}u_j = \mu_{u_j}$ in $\Omega$ in the sense of distribution. Then, the hypothesis \eqref{eq:hypothesis} holds for $u_j$ for sufficiently large $j$. Applying the result to $u_j$, we obtain \eqref{eq:auxest} with $u$ and $\mu$ replaced by $u_j$ and $\mu_{u_j}$, respectively. Therefore, by passing the limit $j \to \infty$ and using the weak convergence $\mu_{u_j} \rightharpoonup \mu$, we conclude the lemma for general case.

For simplicity, we write $B_r = B_r(x_0)$ and assume that $a=0$. Let us assume that $u$ is locally bounded. Then, by \Cref{thm:KKP17}, we have $u \in W^{s, p}_{\mathrm{loc}}(B_{2r})$. Note that \eqref{eq:gamma} is equivalent to $p < q < np/(n-sp)$. Here, $np/(n-sp) = \infty$ when $p=n/s$. By the hypothesis \eqref{eq:hypothesis}, we obtain
\begin{equation*}
d^{-\gamma} \int_{B_{r} \cap \lbrace 0 < u < d \rbrace} u^{\gamma}(x) \,\mathrm{d}x \leq |B_r \cap \lbrace 0 < u < d \rbrace| \leq |B_{2r} \cap \lbrace u > 0 \rbrace| \leq \frac{1}{2} d^{-\gamma} \int_{B_r} u_+^{\gamma}(x) \,\mathrm{d}x,
\end{equation*}
and hence
\begin{equation*}
d^{-\gamma} \fint_{B_{r}} u_+^{\gamma}(x) \,\mathrm{d}x \leq 2 d^{-\gamma} \frac{1}{|B_r|} \int_{B_{r} \cap \lbrace u \geq d \rbrace} u_+^{\gamma}(x) \,\mathrm{d}x \leq C \fint_{B_r} w^q(x) \,\mathrm{d}x,
\end{equation*}
where
\begin{equation*}
w = \left( 1 + \frac{u_+}{d} \right)^{\gamma/q}-1.
\end{equation*}
Let $\eta \in C^{\infty}_{c}(B_{3r/2})$ be such that $0 \leq \eta \leq 1$, $\eta = 1$ on $B_r$, and $|\nabla \eta| \leq C/r$. If $p \in (1, n/s)$, then the fractional Sobolev inequality yields
\begin{equation} \label{eq:subcritical}
\begin{split}
\left( \fint_{B_r} w^q \,\mathrm{d}x \right)^{p/q}
&\leq C \left( \fint_{B_{2r}} (w\eta)^q \,\mathrm{d}x \right)^{p/q} \leq C \left( \fint_{B_{2r}} (w\eta)^{p^{\ast}} \,\mathrm{d}x \right)^{p/p^{\ast}} \\
&\leq Cr^{sp-n} \int_{B_{2r}} \int_{B_{2r}} \frac{|w(x)\eta(x)-w(y)\eta(y)|^p}{|x-y|^{n+sp}} \,\mathrm{d}y \,\mathrm{d}x + C \fint_{B_{2r}} w^p \,\mathrm{d}x \\
&=: I_1 + I_2.
\end{split}
\end{equation}
If $p = n/s$, then let $t \in [1, p)$ and $\sigma \in (0,s)$ be such that $q < t^{\ast}_{\sigma} := nt/(n-\sigma t)$. In this case, we replace $p^{\ast}$ by $t^{\ast}_{\sigma}$ in \eqref{eq:subcritical} and then apply the fractional Sobolev inequality with $t < n/\sigma$, \cite[Lemma 4.6]{Coz17}, and H\"older's inequality to have the estimate. 

We define a test function
\begin{equation*}
v = \left( 1 - \left( 1+ \frac{u_+}{d} \right)^{1-\tau} \right) \eta^p, \quad \tau = \frac{\gamma}{p-1}.
\end{equation*}
Then, it follows from $\mathcal{E}^k(u, v) = \langle \mu, v \rangle$ that
\begin{equation*}
\begin{split}
&\int_{B_{2r}} \int_{B_{2r}} \left| \frac{u(x)}{d}-\frac{u(y)}{d} \right|^{p-2}\left( \frac{u(x)}{d}-\frac{u(y)}{d} \right)(v(x)-v(y)) k(x, y) \,\mathrm{d}y \,\mathrm{d}x \\
&= d^{1-p} \int_{B_{2r}} v(x) \,\mathrm{d}\mu(x) - 2d^{1-p} \int_{B_{2r}} \int_{\mathbb{R}^n \setminus B_{2r}} |u(x)-u(y)|^{p-2}(u(x)-u(y))v(x) k(x, y) \,\mathrm{d}y \,\mathrm{d}x.
\end{split}
\end{equation*}
By applying \Cref{lem:alg-ineq-wolff} to $a=u(x)/d$, $b=u(y)/d$, $\eta_1=\eta(x)$, and $\eta_2=\eta(y)$, we obtain
\begin{equation*}
\begin{split}
I_1
&\leq Cr^{sp-n} \int_{A^+_{2r}} \left( 1+\frac{u_+(x)}{d} \right)^{\gamma} \int_{A^+_{2r}} |\eta(x)-\eta(y)|^p k(x, y) \,\mathrm{d}y \,\mathrm{d}x \\
&\quad + Cd^{1-p} r^{sp-n} \int_{B_{2r}} v(x) \,\mathrm{d}\mu(x) \\
&\quad - C d^{1-p} r^{sp-n} \int_{B_{2r}} \int_{\mathbb{R}^n \setminus B_{2r}} |u(x)-u(y)|^{p-2}(u(x)-u(y)) v(x) k(x, y) \,\mathrm{d}y \,\mathrm{d}x \\
&\leq C r^{-n} \int_{A^+_{2r}} \left( 1+\frac{u_+(x)}{d} \right)^{\gamma} \,\mathrm{d}x + Cd^{1-p} \frac{\mu(B_{2r})}{r^{n-sp}} \\
&\quad + C d^{1-p} r^{sp-n} \int_{B_{2r}} \int_{\mathbb{R}^n \setminus B_{2r}} \frac{(u(y)-u(x))_+^{p-1} \eta^p(x)}{|x-y|^{n+sp}} \,\mathrm{d}y \,\mathrm{d}x,
\end{split}
\end{equation*}
where $A_{2r}^+ = B_{2r} \cap \lbrace u > 0 \rbrace$. By the hypothesis \eqref{eq:hypothesis}, we have
\begin{equation*}
r^{-n} \int_{A^+_{2r}} \left( 1+\frac{u_+(x)}{d} \right)^{\gamma} \,\mathrm{d}x \leq C d^{-\gamma} \fint_{B_{2r}} u_+^{\gamma}(x) \,\mathrm{d}x.
\end{equation*}
Moreover, recalling that $\mathrm{\supp} \,\eta \subset B_{3r/2}$, we obtain
\begin{equation*}
\begin{split}
&C d^{1-p} r^{sp-n} \int_{B_{2r}} \int_{\mathbb{R}^n \setminus B_{2r}} \frac{(u(y)-u(x))_+^{p-1} \eta^p(x)}{|x-y|^{n+sp}} \,\mathrm{d}y \,\mathrm{d}x \\
&\leq C d^{1-p} r^{sp-n} \int_{B_{3r/2}} \int_{\mathbb{R}^n \setminus B_{2r}} \frac{u_+^{p-1}(y)}{|y-x_0|^{n+sp}} \,\mathrm{d}y \,\mathrm{d}x \\
&\leq C d^{1-p} \mathrm{Tail}^{p-1}(u_+; x_0, 2r).
\end{split}
\end{equation*}

We next estimate $I_2$. Since $w^q \leq (1+u/d)^{\gamma}$, by H\"older's inequality and the hypothesis \eqref{eq:hypothesis} we have
\begin{equation*}
I_2 \leq \frac{C}{r^{n}} \int_{A^+_{2r}} w^p(x) \,\mathrm{d}x \leq \frac{C}{r^n} |A^+_{2r}|^{1-p/q} \left( \int_{A^+_{2r}} w^q(x) \,\mathrm{d}x \right)^{p/q} \leq C d^{-\gamma} \fint_{B_{2r}} u_+^{\gamma}(x) \,\mathrm{d}x.
\end{equation*}
By combining all the estimates, we conclude the lemma.
\end{proof}

\begin{proof} [Proof of \Cref{thm:Wolff}]
	We fix a constant $\delta \in (0, 1)$ to be determined later. Let $B_j=B_{r_j}(x_0)$, where $r_j=2^{1-j}r$. We define a sequence $\{a_j\}_{j=0}^{\infty}$ recursively. Let $a_0=0$ and for $j \geq 0$, let 
	\begin{align} \label{eq:def-aj}
	a_{j+1}:=a_j+\delta^{-1}\left( \fint_{B_{j+1}} (u(x)-a_j)_+^{\gamma} \,\mathrm{d}x\right)^{1/\gamma},
	\end{align}
	where $\gamma$ satisfies \eqref{eq:gamma}. We will prove the desired estimate in several steps.

	We first prove the following estimate: for $q = p\gamma/(p-\gamma/(p-1))$, 
		\begin{align} \label{estlem}
		\delta^{p\gamma/q} \leq C \delta^{\gamma}\left(\frac{a_j-a_{j-1}}{a_{j+1}-a_j}\right)^{\gamma}+C (a_{j+1}-a_j)^{1-p} \left( \frac{\mu(B_j)}{r_j^{n-sp}}+ \mathrm{Tail}^{p-1}((u-a_j)_+; x_0, r_j) \right)
		\end{align}
		whenever $a_{j+1} >a_j$, $j \geq 1$. We observe that
		\begin{align}
		\begin{split} \label{eq:vol-aj}
		|B_j \cap \{u>a_j\}| &\leq (a_j-a_{j-1})^{-\gamma} \int_{B_j \cap \{u>a_j\}}(u(x)-a_{j-1})^\gamma \, \mathrm{d}x \\
		&\leq (a_j-a_{j-1})^{-\gamma} \int_{B_j}(u(x)-a_{j-1})_+^\gamma \, \mathrm{d}x \\
		&=\delta^{\gamma} |B_{j}| = 2^n \delta^{\gamma} |B_{j+1}| \\
		&=2^n(a_{j+1}-a_j)^{-\gamma} \int_{B_{j+1}}(u(x)-a_{j})_+^\gamma \, \mathrm{d}x.
		\end{split}
		\end{align}
		On the one hand, by taking $\delta^{\gamma} \leq 1/2$, we have $|B_j \cap \lbrace u > a_j \rbrace| \leq \frac{1}{2} |B_j|$, which implies that
		\begin{align} \label{inf}
			\inf_{B_j}u \leq a_j.
		\end{align}
		On the other hand, if we choose 
		\begin{align*}
		d_j:=2^{-(n+2)/\gamma}(a_{j+1}-a_j)>0,
		\end{align*}
		then it follows that 
		\begin{align*}
			|B_{j} \cap \{u>a_j\}| < \frac{1}{2}d_j^{-\gamma} \int_{B_{j+1}}(u(x)-a_j)_+^{\gamma} \,\mathrm{d}x,
		\end{align*}
		which is the hypothesis of \Cref{auxest}. Therefore, by \Cref{auxest}, we derive
	\begin{equation*}
	\begin{split}
	\delta^{p\gamma/q}
	&= C \left( d_j^{-\gamma} \fint_{B_{{j+1}}} (u(x)-a_j)_+^{\gamma} \,\mathrm{d}x \right)^{p/q} \\
	&\leq C d_j^{-\gamma} \fint_{B_{j}} (u(x)-a_j)_+^{\gamma} \,\mathrm{d}x +C d_j^{1-p} \left( \frac{\mu(B_{j})}{r_j^{n-sp}}+ \mathrm{Tail}^{p-1}((u-a_j)_+; x_0, r_j) \right).
	\end{split}
	\end{equation*}
	Recalling that
	\begin{align*}
	d_j^{-\gamma} \fint_{B_{j}} (u(x)-a_j)_+^{\gamma} \,\mathrm{d}x \leq  d_j^{-\gamma} \fint_{B_{j}} (u(x)-a_{j-1})_+^{\gamma} \,\mathrm{d}x= C \delta^{\gamma} (d_{j-1}/d_j)^{\gamma},
	\end{align*}
	we conclude that the estimate \eqref{estlem} holds.
	
We next prove the following recursive inequality using \eqref{estlem}: for $j \geq 1$,
\begin{equation} \label{induction}
\begin{split}
d_j
&\leq \frac{1}{2} d_{j-1} + C\delta^{\frac{\gamma}{p-1}(1-\frac{p}{q})} \sum_{i=2}^{j} 2^{-\frac{sp}{2(p-1)}i} d_{j-i} \\
&\quad + C \delta^{-\frac{p\gamma}{(p-1)q}} \left( \left( \frac{\mu(B_j)}{r_j^{n-sp}}\right)^{\frac{1}{p-1}} + 2^{-\frac{sp}{p-1}j} \mathrm{Tail}(u_+; x_0, r) \right).
\end{split}
\end{equation}
	We may assume that $d_j > \frac{1}{2} d_{j-1}$; otherwise, the inequality becomes trivial. Then by \eqref{estlem}, we have
	\begin{align*}
	1 \leq C \delta^{\gamma(1-p/q)}+C \delta^{-p\gamma/q} d_j^{1-p} \left( \frac{\mu(B_j)}{r_j^{n-sp}}+ \mathrm{Tail}^{p-1}((u-a_j)_+; x_0, r_j) \right).
	\end{align*}
	By choosing $0<\delta=\delta(n,s,p,\Lambda, \gamma)<1$ sufficiently small so that $C\delta^{\gamma(1-p/q)} \leq 1/2$, we obtain
	\begin{equation*}
	d_j \leq C \delta^{-\frac{p\gamma}{(p-1)q}} \left( \left( \frac{\mu(B_j)}{r_j^{n-sp}}\right)^{\frac{1}{p-1}} + \mathrm{Tail}((u-a_j)_+; x_0, r_j) \right).
	\end{equation*}
Since
	\begin{equation*}
	\begin{split}
	&\mathrm{Tail}^{p-1}((u-a_j)_+; x_0, r_j) \\
	&= \left( \frac{r_j}{r_1} \right)^{sp} \mathrm{Tail}^{p-1}((u-a_j)_+; x_0, r) + r_j^{sp} \int_{B_1 \setminus B_j} \frac{(u(x)-a_j)_+^{p-1}}{|x-x_0|^{n+sp}} \,\mathrm{d}x \\
	&\leq C 2^{-spj} \mathrm{Tail}^{p-1}(u_+; x_0, r) + r_j^{sp} \int_{B_{1} \setminus B_{j}} \frac{(u(x)-a_j)_+^{p-1}}{|x-x_0|^{n+sp}} \,\mathrm{d}x,
	\end{split}
	\end{equation*}
	it is enough to show that
	\begin{equation} \label{eq:I}
	I:= \left( r_j^{sp} \int_{B_{1} \setminus B_{j}} \frac{(u(x)-a_j)_+^{p-1}}{|x-x_0|^{n+sp}} \,\mathrm{d}x \right)^{\frac{1}{p-1}} \leq C\delta^{\frac{\gamma}{p-1}} \sum_{i=2}^{j} 2^{-\frac{sp}{2(p-1)}i} d_{j-i}.
	\end{equation}
Indeed, by using \eqref{eq:Dyd06} with $t = 1/(p-1)$ and $\beta = 2^{\frac{sp}{2(p-1)}} > 1$, and then using H\"older's inequality, we have
\begin{equation*}
\begin{split}
I
&\leq \left( \sum_{i=2}^{j} \frac{r_j^{sp}}{r_{j-i+2}^{n+sp}} \int_{B_{j-i+1} \setminus B_{j-i+2}} (u(x)-a_j)_+^{p-1} \,\mathrm{d}x \right)^{\frac{1}{p-1}} \\
&\leq C \sum_{i=2}^{j} 2^{-\frac{sp}{2(p-1)}i} r_{j-i+2}^{-\frac{n}{p-1}} \left( \int_{B_{j-i+1}} (u(x)-a_j)_+^{p-1} \,\mathrm{d}x \right)^{\frac{1}{p-1}} \\
&\leq C \sum_{i=2}^{j} 2^{-\frac{sp}{2(p-1)}i} r_{j-i+2}^{-\frac{n}{p-1}} \left( \int_{B_{j-i+1}} (u(x)-a_j)_+^{\gamma} \,\mathrm{d}x \right)^{\frac{1}{\gamma}} |B_{j-i+1} \cap \lbrace u > a_{j} \rbrace|^{\frac{1}{p-1}-\frac{1}{\gamma}}.
\end{split}
\end{equation*}
Moreover, by \eqref{eq:vol-aj} and \eqref{eq:def-aj} we deduce
\begin{equation*}
\begin{split}
I
&\leq C \sum_{i=2}^{j} 2^{-\frac{sp}{2(p-1)}i} r_{j-i+2}^{-\frac{n}{p-1}} \left( \int_{B_{j-i+1}} (u(x)-a_{j-i})_+^{\gamma} \,\mathrm{d}x \right)^{\frac{1}{\gamma}} |B_{j-i+1} \cap \lbrace u > a_{j-i+1} \rbrace|^{\frac{1}{p-1}-\frac{1}{\gamma}} \\
&\leq C \sum_{i=2}^{j} 2^{-\frac{sp}{2(p-1)}i} r_{j-i+2}^{-\frac{n}{p-1}} d_{j-i}^{-\frac{\gamma}{p-1}+1} \left( \int_{B_{j-i+1}} (u(x)-a_{j-i})_+^{\gamma} \,\mathrm{d}x \right)^{\frac{1}{p-1}} \\
&\leq C \delta^{\frac{\gamma}{p-1}} \sum_{i=2}^{j} 2^{-\frac{sp}{2(p-1)}i} d_{j-i},
\end{split}
\end{equation*}
which proves \eqref{eq:I}.

	Finally, we are ready to finish the proof. By an iterative application of \eqref{induction}, we have
	\begin{align*}
	a_k-a_1 &\leq a_{k+1}-a_1 =2^{(n+2)/\gamma}\sum_{j=1}^k d_j \\
	&\leq \frac{1}{2}a_k + C \delta^{\frac{\gamma}{p-1}(1-\frac{p}{q})} \sum_{j=1}^{k} \sum_{i=2}^{j} 2^{-\frac{sp}{2(p-1)}i} d_{j-i} \\
	&\quad + C \delta^{-\frac{p\gamma}{(p-1)q}} \sum_{j=1}^k \left( \left( \frac{\mu(B_j)}{r_j^{n-sp}}\right)^{\frac{1}{p-1}} + 2^{-\frac{sp}{p-1}j} \mathrm{Tail}(u_+; x_0, r) \right).
	\end{align*}
	Note that we have
	\begin{equation*}
	\sum_{j=1}^{k} \sum_{i=2}^{j} 2^{-\frac{sp}{2(p-1)}i} d_{j-i} = \sum_{i=2}^{k} 2^{-\frac{sp}{2(p-1)}i} \sum_{j=i}^{k} d_{j-i} \leq 2^{-(n+2)/\gamma}\sum_{i=2}^{k} 2^{-\frac{sp}{2(p-1)}i} a_k \leq Ca_k.
	\end{equation*}
	Thus, we take $\delta$ sufficiently small so that
	\begin{equation*}
	a_k-a_1 \leq \frac{3}{4} a_k + C\,\sum_{j=1}^k \left(
	\frac{\mu(B_j)}{r_j^{n-sp}}\right)^{\frac{1}{p-1}}+ C \left( \sum_{j=1}^{k} 2^{-\frac{sp}{p-1}j} \right) \mathrm{Tail}(u_+; x_0, r).
	\end{equation*}
	In other words,
	\begin{align*}
	\limsup_{k \to \infty}a_k
	&\leq C \left( \fint_{B_{r}(x_0)}u^{\gamma} \,\mathrm{d}x \right)^{1/\gamma} + C \, {\bf W}_{s,p}^{\mu_u}(x_0, 2r)+ C \, \mathrm{Tail}(u_+; x_0, r) \\
	&\leq C \left( \fint_{B_{2r}(x_0)}u^{\gamma} \,\mathrm{d}x \right)^{1/\gamma} + C \, {\bf W}_{s,p}^{\mu_u}(x_0, 2r)+ C \, \mathrm{Tail}(u_+; x_0, r).
	\end{align*}
	Recalling the lower semicontinuity of $u$, together with \eqref{inf}, we have
	\begin{align*}
		u(x_0) \leq \lim_{k \to \infty}\inf_{B_k}u \leq \liminf_{k \to \infty}a_k.
	\end{align*}
	Since $\gamma < \frac{n(p-1)}{n-s(p-1)} < \frac{n(p-1)}{n-sp}$, \Cref{thm:interior-WHI} completes the proof.
\end{proof}

We turn back to the necessary part of the Wiener criterion. Let $x_0 \in \partial \Omega$ be a boundary point and assume that $p \leq n/s$. Let $u_\rho = \mathfrak{R}(D_\rho(x_0), B_{8\rho}(x_0))$ be the $\mathcal{L}$-potential of $D_\rho(x_0)$ in $B_{8\rho}(x_0)$, where $D_{\rho}(x_0)$ is defined as \eqref{eq:D}. Let $\mu_\rho$ be the $\mathcal{L}$-distribution of $D_\rho(x_0)$ with respect to $B_{8\rho}(x_0)$.

\begin{lemma} \label{irr}
	If there exists $\rho>0$ such that
	\begin{align} \label{irreq}
	u_{\rho}(x_0)=\liminf_{\Omega \ni x \to x_0}u_{\rho}(x) <1,
	\end{align}
	then the boundary point $x_0$ is irregular.
\end{lemma}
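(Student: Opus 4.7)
The argument is by contradiction: assume $x_0 \in \partial\Omega$ is regular with respect to $\mathcal{L}$, and produce a continuous boundary datum whose solution violates the regularity limit at $x_0$. The natural candidate is a smooth cutoff $g \in C_c^\infty(\mathbb{R}^n)$ with $0 \le g \le 1$, $g \equiv 1$ on an open neighborhood of $\overline{B_\rho(x_0)}$, and $\operatorname{supp} g \subset B_{8\rho}(x_0)$, so that $g \in V^{s,p}(\Omega|\mathbb{R}^n) \cap C(\mathbb{R}^n)$ and $g(x_0) = 1$. Let $v \in V^{s,p}(\Omega|\mathbb{R}^n) \cap C(\Omega)$ be the unique $\mathcal{L}$-harmonic function with $v - g \in W_0^{s,p}(\Omega)$ furnished by \Cref{thm:DP}. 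Comparison with the constants $0$ and $1$ gives $0 \le v \le 1$ everywhere, and the assumed regularity of $x_0$ yields $\lim_{\Omega \ni x \to x_0} v(x) = g(x_0) = 1$.

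The crux is to compare $v$ and $u_\rho$ on the open set $U := \Omega \cap B_{8\rho}(x_0)$. Both are $\mathcal{L}$-harmonic on $U$: indeed $u_\rho$ is $\mathcal{L}$-harmonic in $B_{8\rho}(x_0) \setminus D_\rho(x_0) \supset U$ by definition of $\mathfrak{R}(D_\rho(x_0), B_{8\rho}(x_0))$. To invoke the comparison principle \Cref{thm:comparison} in $U$ I would check $v \le u_\rho$ in the weak sense on $\mathbb{R}^n \setminus U = (\mathbb{R}^n \setminus \Omega) \cup (\Omega \setminus B_{8\rho}(x_0))$ by decomposing this exterior set: on $D_\rho(x_0) = \overline{B_\rho(x_0)} \setminus \Omega$ one has $v = g = 1 = u_\rho$; on $(\mathbb{R}^n \setminus \Omega) \cap (B_{8\rho}(x_0) \setminus \overline{B_\rho(x_0)})$ the choice of $g$ as a cutoff with appropriate decay and the $\mathcal{L}$-harmonicity of $u_\rho$ give $g \le u_\rho$; and outside $B_{8\rho}(x_0)$ the datum $g$ vanishes, matching $u_\rho = 0$. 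Once $v \le u_\rho$ is established in $U$, taking the $\Omega$-limit at $x_0$ produces
\begin{equation*}
1 = \lim_{\Omega \ni x \to x_0} v(x) \le \liminf_{\Omega \ni x \to x_0} u_\rho(x) = u_\rho(x_0) < 1,
\end{equation*}
the desired contradiction.

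The principal obstacle is the comparison on $\Omega \setminus B_{8\rho}(x_0)$: here $u_\rho \equiv 0$ while $v$ need not vanish, so a literal pointwise inequality fails and the nonlocal comparison principle cannot be applied directly on $U$. I anticipate overcoming this either by (i) replacing $u_\rho$ with $u_\rho + \varphi$ where $\varphi$ is an auxiliary nonnegative cutoff equal to $1$ outside $B_{8\rho}(x_0)$ and using that $u_\rho + \varphi$ is still an $\mathcal{L}$-supersolution in $U$ (at the cost of worsening the final inequality by a controllable amount), or (ii) localizing the comparison to $\Omega \cap B_{2\rho}(x_0)$ and absorbing the exterior contribution into a tail term controlled via \Cref{thm:bdry-WHI} and the boundedness $v \le 1$. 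Either route reduces the problem to verifying that the tail correction does not spoil the strict inequality $u_\rho(x_0) < 1$, which is ensured by a suitable choice of the cutoff $g$ and of the comparison domain.
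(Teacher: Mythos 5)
Your overall architecture (compare the solution for a suitable boundary datum with $u_\rho$ on $\Omega \cap B_{8\rho}(x_0)$ and pass to the limit at $x_0$) matches the paper's, and you correctly locate the main obstacle on $\Omega \setminus B_{8\rho}(x_0)$. But neither of your proposed fixes closes the gap. Fix (i) rests on a false claim: if $\varphi \ge 0$ vanishes on a neighborhood $N$ of $U := \Omega \cap B_{8\rho}(x_0)$ and is positive outside, then for $x \in \supp\psi \subset U$ and $y \notin N$ one has $(u_\rho+\varphi)(x)-(u_\rho+\varphi)(y) = u_\rho(x)-u_\rho(y)-\varphi(y) \le u_\rho(x)-u_\rho(y)$; since $t \mapsto |t|^{p-2}t$ is nondecreasing and $\psi \ge 0$, this gives $\mathcal{E}^k(u_\rho+\varphi,\psi) \le \mathcal{E}^k(u_\rho,\psi) = 0$ for nonnegative $\psi \in C_c^{\infty}(U)$. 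Raising a function outside the comparison domain makes it a \emph{sub}solution there, not a supersolution, so the comparison runs the wrong way. Fix (ii) is too vague to work as stated: with $g \equiv 1$ near $\overline{B_\rho(x_0)}$ the solution $v$ is only bounded by $1$ on $\Omega\setminus B_{8\rho}(x_0)$, so the resulting tail correction is of order one and there is no reason it should be smaller than the gap $1-u_\rho(x_0)$. A further unjustified step is the claim $g \le u_\rho$ on $(\mathbb{R}^n\setminus\Omega)\cap(B_{8\rho}(x_0)\setminus\overline{B_\rho(x_0)})$: there $u_\rho$ is the harmonic function (not the datum) and can be arbitrarily close to $0$ near $\partial B_{8\rho}(x_0)$, while any continuous cutoff equal to $1$ on $\overline{B_\rho(x_0)}$ must take intermediate values on that set.

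The paper supplies exactly the two missing ingredients. First, it compares $v_r$ with $\tfrac12 + u_\rho$ rather than with $u_\rho$, taking the datum equal to $3/2$ (not $1$) near $x_0$; the additive $\tfrac12$ absorbs both the annular part of $\mathbb{R}^n\setminus\Omega$ (where the datum is $0 \le \tfrac12$) and the troublesome set $\Omega\setminus B_{8\rho}(x_0)$, while the value $3/2$ at $x_0$ still beats $\tfrac12+u_\rho(x_0)<\tfrac32$. Second --- and this is the essential idea your proposal lacks --- the set where the datum is nonzero is shrunk to $\overline{B_r(x_0)}\setminus\Omega$ with $r \to 0$: combining \Cref{thm:loc-bdd}, the comparison $v_r \le \tfrac32 w$ with $w=\mathfrak{R}(D_r(x_0),B_R(x_0))$, \Cref{poincare}, and \Cref{lem:cap} shows $\wsup_{\Omega\setminus B_{8\rho}(x_0)} v_r \le \tfrac12$ once $r$ is small, because $\mathrm{cap}_{s,p}(\overline{B_r(x_0)},B_R(x_0)) \to 0$ as $r\to 0$ when $p\le n/s$. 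Without this quantitative smallness of the solution away from $x_0$, the comparison on $\Omega\cap B_{8\rho}(x_0)$ cannot be set up, and your argument does not go through.
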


\begin{proof}
Note that, if \eqref{irreq} holds for some $\rho > 0$, then it holds for $\rho' < \rho$ as well by \Cref{thm:comparison}. For simplicity, we write $B_\rho = B_{\rho}(x_0)$ and $D_\rho = D_{\rho}(x_0)$. Let us choose $\rho_0>0$ sufficiently small so that $\Omega\cap \partial B_{8\rho} \neq \emptyset$ for all $\rho < \rho_0$. We fix $\rho < \rho_0$ and let $r < \rho$. Let $g \in C^{\infty}_c(\mathbb{R}^n)$ be a function such that $g(x)=3/2$ on $\overline{B_{r/2}} \setminus \Omega$, $0 \leq g(x)<3/2$ in $(B_r \setminus \overline{B_{r/2}})\setminus \Omega$, and $g(x)=0$ on the remaining part of $\mathbb{R}^n \setminus \Omega$. Let us denote by $v_r$ the $\mathcal{L}$-harmonic function in $\Omega$ such that $v_r-g \in W_{0}^{s,p}(\Omega)$. We claim that there exists $r_0 = r_0(n, s, p, \rho, \Lambda, \mathrm{diam}(\Omega)) > 0$ such that
	\begin{align} \label{wts0}
	v_r \leq \frac{1}{2}+u_{\rho} \quad\text{in } \Omega\cap B_{8\rho}
	\end{align}
for all $r < r_0$. Once \eqref{wts0} is proved, then we deduce
	\begin{align*}
	\liminf_{\Omega \ni x \to x_0} v_r <\frac{3}{2}=g(x_0),
	\end{align*}
	which concludes that the boundary point $x_0$ is irregular. 
	
The claim \eqref{wts0} can be proved by using the comparison principle and the fact that $v_r, u_\rho$ are continuous in $\Omega \cap B_{8\rho}$. To this end, let us check that $v_r \leq 1/2+u_{\rho}$ in $\mathbb{R}^n \setminus (\Omega \cap B_{8\rho})$ in the sense of $V^{s, p}(\Omega \cap B_{8\rho}|\mathbb{R}^n)$. We divide $\mathbb{R}^n \setminus (\Omega \cap B_{8\rho})$ into three regions;
\begin{equation*}
\mathbb{R}^n \setminus (\Omega \cap B_{8\rho}) = D_{\rho} \cup \left( (\mathbb{R}^n \setminus \overline{B_{\rho}}) \setminus \Omega \right) \cup (\Omega \setminus B_{8\rho}).
\end{equation*}
Let $f \in C^{\infty}_{c}(B_{8\rho})$ be such that $0 \leq f \leq 1$ and $f = 1$ on $D_{\rho}$. Then, $u_\rho = f$ in $D_\rho \cup (\mathbb{R}^n \setminus B_{8\rho})$ in the sense of $V^{s, p}(B_{8\rho} \setminus D_{\rho}|\mathbb{R}^n)$. Since $\Omega \cap B_{8\rho} \subset B_{8\rho} \setminus D_{\rho}$, we have
\begin{equation*}
v_r = g \leq \frac{3}{2} = \frac{1}{2} + f = \frac{1}{2} + u_{\rho} \quad\text{in } D_{\rho}
\end{equation*}
and
\begin{equation*}
v_r = g = 0 < \frac{1}{2} \leq \frac{1}{2} + u_{\rho} \quad\text{in } (\mathbb{R}^n \setminus \overline{B_{\rho}}) \setminus \Omega
\end{equation*}
in the sense of $V^{s, p}(\Omega \cap B_{8\rho}|\mathbb{R}^n)$. Therefore, it is sufficient to establish the estimate $v_r \leq \frac{1}{2} + u_{\rho}$ in $\Omega \setminus B_{8\rho}$ in the sense of $V^{s, p}(\Omega \cap B_{8\rho}|\mathbb{R}^n)$. Since $u_{\rho} = 0$ in $\mathbb{R}^n \setminus B_{8\rho}$ in the sense of $V^{s, p}(\Omega \cap B_{8\rho}|\mathbb{R}^n)$, we will prove that
\begin{equation} \label{eq:claim2}
v_r \leq \frac{1}{2} \quad\text{in } \Omega \setminus B_{8\rho} \text{ in the sense of } V^{s, p}(\Omega \cap B_{8\rho}|\mathbb{R}^n)
\end{equation}
in the rest of the proof.

Let $z \in \Omega \setminus B_{8\rho}(x_0)$ and consider a ball $B_{\rho}(z)$. Since $B_r(x_0) \cap B_{\rho}(z) = \emptyset$ and $g=0$ in $(\mathbb{R}^n \setminus B_r(x_0)) \setminus \Omega$, we have $g=0$ in $B_{\rho}(z) \setminus \Omega$. By applying \Cref{thm:loc-bdd} to $v_r$ in $B_{\rho}(z)$, we obtain
\begin{equation} \label{lb}
\wsup_{B_{\rho/2}(z)} v_r \leq \delta \,\mathrm{Tail}(v_r; z, \rho/2) + C \delta^{-\frac{(p-1)n}{sp^2}} \left( \fint_{B_{\rho}(z)} v_r^p(x) \,\mathrm{d}x \right)^{1/p}.
\end{equation}
To estimate the terms on the right-hand side of \eqref{lb}, we let $w$ be the $\mathcal{L}$-potential of $D_r(x_0)$ in $B_R(x_0)$, where $R$ is the diameter of $\Omega$. By the comparison principle, $v_r(x) \leq \frac{3}{2}w(x)$ a.e. in $\mathbb{R}^n$. Since $w \leq 1$, we have
\begin{equation} \label{eq:vr-tail}
\mathrm{Tail}(v_r; z, \rho/2) \leq \frac{3}{2} \mathrm{Tail}(w; z, \rho/2) \leq \frac{3}{2} \left( \left( \frac{\rho}{2} \right)^{sp} \int_{\mathbb{R}^n \setminus B_{\rho/2}(z)} \frac{1}{|y-z|^{n+sp}} \,\mathrm{d}y \right)^{\frac{1}{p-1}} \leq C.
\end{equation}
For the last term on the right-hand side of \eqref{lb}, we apply the fractional Poincar\'e inequality, \Cref{poincare}, to obtain 
\begin{align}\label{eq:poincare}
	\int_{B_{\rho}(z)} v_r^p(x) \,\mathrm{d}x \leq (3/2)^p \int_{B_R(x_0)} w^p(x) \,\mathrm{d}x \leq C(n,s,p,\Lambda) R^{sp} \, \mathcal{E}^{k}(w,w).
\end{align}
Moreover, we observe from \Cref{lem:cap} that
\begin{equation} \label{eq:capacity}
\begin{split}
\mathcal{E}^k(w, w)
&\leq \Lambda \mathrm{cap}_{s, p}(D_r(x_0), B_R(x_0)) \\
&\leq \Lambda \mathrm{cap}_{s, p}(\overline{B_r(x_0)}, B_{R}(x_0)) \leq C
\begin{cases}
r^{n-sp} &\text{if } p < n/s, \\
(\log(R/r))^{1-p} &\text{if } p = n/s.
\end{cases}
\end{split}
\end{equation}
By combining \eqref{lb}, \eqref{eq:vr-tail}, \eqref{eq:poincare} and \eqref{eq:capacity}, we arrive at
\begin{equation*}
\wsup_{B_{\rho/2}(z)}v_r \leq 
\begin{cases}
\displaystyle C_1 \delta + C_2 \delta^{-\frac{(p-1)n}{sp^2}} \rho^{-\frac{n}{p}} R^{s} {r}^{\frac{n-sp}{p}} &\text{if } p < n/s, \\
\displaystyle C_1 \delta + C_2 \delta^{-\frac{(p-1)n}{sp^2}}\rho^{-\frac{n}{p}} R^{s} \left( \log \frac{R}{r} \right)^{\frac{1-p}{p}} &\text{if } p = n/s.
\end{cases}
\end{equation*}
We first take $\delta > 0$ sufficiently small so that $C_1 \delta \leq 1/4$. By taking $r_0 = r_0(n, s, p, \rho, \Lambda, R)>0$ sufficiently small, we conclude that $\wsup_{B_{\rho/2}(z)}v_r \leq 1/2$ for all $r < r_0$, from which \eqref{eq:claim2} follows.
\end{proof}

With the help of \Cref{irr}, we finally prove the necessary part of \Cref{thm:main}.

\begin{proof} [Proof of the necessary part of \Cref{thm:main}]
Let $x_0 \in \partial \Omega$ be a boundary point. For simplicity, we will write $B_\rho=B_\rho(x_0)$ for $\rho>0$. We claim that $x_0 \in \partial \Omega$ is irregular if the Wiener integral converges, that is,
\begin{align} \label{wieint}
\int_0 \left(\frac{\mathrm{cap}_{s,p}(D_t, B_{2t})}{t^{n-sp}}\right)^{\frac{1}{p-1}} \,\frac{\mathrm{d}t}{t} < +\infty,
\end{align}
where $D_t$ is defined as \eqref{eq:D}. To prove this, we shall employ \Cref{irr}; i.e., we will show that there is a $\rho \in (0,1)$ such that the condition \eqref{irreq} holds for the $\mathcal{L}$-potential $u_{\rho}(x)=\mathfrak{R}(D_{\rho}, B_{8\rho})$ of $D_{\rho}$ in $B_{8\rho}$.

Let $\mu_\rho$ be the $\mathcal{L}$-distribution of $D_\rho$ with respect to $B_{8\rho}$, that is, $\mathcal{L}u_\rho=\mu_\rho$ in $B_{8\rho}$. By \Cref{thm:regularization} and \Cref{rmk:supersoln} (ii), we may assume that $u$ is $\mathcal{L}$-superharmonic in $B_{8\rho}$. Then, \Cref{thm:Wolff} yields
\begin{equation} \label{pt}
u_{\rho}(x_0) \leq C \left( \inf_{B_{2\rho}}u_{\rho} + {\bf W}_{s,p}^{\mu_{\rho}}(x_0, 4\rho) + \mathrm{Tail}(u_{\rho}; x_0, 2\rho) \right).
\end{equation}
By \Cref{mecap}, we have
\begin{equation} \label{eq:W}
\begin{split}
{\bf W}^{\mu_{\rho}}_{s, p}(x_0, 4\rho)
&\leq \int_0^{4\rho} \left(\frac{\mu_\rho(\overline{B_{t}})}{t^{n-sp}}\right)^{\frac{1}{p-1}} \frac{\mathrm{d}t}{t} \\
&\leq \Lambda^{\frac{1}{p-1}} \int_0^{4\rho} \left(\frac{\mathrm{cap}_{s, p}(D_t, B_{8\rho})}{t^{n-sp}}\right)^{\frac{1}{p-1}} \frac{\mathrm{d}t}{t} \\
&\leq \Lambda^{\frac{1}{p-1}} \int_0^{4\rho}\left(\frac{\mathrm{cap}_{s,p}(D_t, B_{2t})}{t^{n-sp}}\right)^{\frac{1}{p-1}} \,\frac{\mathrm{d}t}{t}.
\end{split}
\end{equation}
Thus, it follows from \eqref{wieint} that ${\bf W}_{s,p}^{\mu_{\rho}}(x_0, 4\rho) \to 0$ as $\rho \to 0$.

For the first term on the right-hand side of \eqref{pt}, we claim that
\begin{equation} \label{eq:inf-mu}
\inf_{B_{2\rho}}u_{\rho}  \leq C \left( \frac{\mathrm{cap}_{s, p}(D_\rho, B_{2\rho})}{\rho^{n-sp}} \right)^{\frac{1}{p-1}}.
\end{equation}
Let $\lambda_\rho = \essinf_{B_{2\rho}} u_\rho$ and define $v_\rho(x) = \min\lbrace u_\rho(x), \lambda_\rho \rbrace$. By testing the equation $\mathcal{L}u_{\rho}=\mu_{\rho}$ with $v_{\rho}$, we obtain
\begin{align} \label{comp1}
\mathcal{E}^k(u_{\rho}, v_{\rho})=\int_{B_{8\rho}} v_{\rho} \,\mathrm{d}\mu_\rho \leq \lambda_{\rho} \mu_{\rho}(\overline{B_{\rho}})
\end{align}
since the support of $\mu_{\rho}$ is contained in $\overline{B_{\rho}}$. Moreover, we claim that 
\begin{align*}
	|u_{\rho}(x)-u_{\rho}(y)|^{p-2}(u_{\rho}(x)-u_{\rho}(y)) (v_{\rho}(x)-v_{\rho}(y)) 
	\geq |v_{\rho}(x)-v_{\rho}(y)|^p.
\end{align*}
Indeed, we may assume $u_{\rho}(x) \geq u_{\rho}(y)$ without loss of generality. Then,
\begin{align*}
	&|u_{\rho}(x)-u_{\rho}(y)|^{p-2}(u_{\rho}(x)-u_{\rho}(y)) (v_{\rho}(x)-v_{\rho}(y))\\
	&=	
	\left\{ \begin{array}{ll} 
		0, & \textrm{if $u_{\rho}(x) \geq u_{\rho}(y) \geq \lambda_{\rho}$},\\
		(u_{\rho}(x)-u_{\rho}(y))^{p-1} (\lambda_{\rho}-v_{\rho}(y)), & \textrm{if $u_{\rho}(x) >\lambda_{\rho} \geq u_{\rho}(y)$},\\
		|v_{\rho}(x)-v_{\rho}(y)|^p, & \textrm{if $\lambda_{\rho} \geq u_{\rho}(x) \geq u_{\rho}(y)$}
	\end{array} \right.\\
	&\geq |v_{\rho}(x)-v_{\rho}(y)|^p.
\end{align*}
Therefore, we have
\begin{equation} \label{eq:vv}
\mathcal{E}^k(v_{\rho}, v_{\rho}) \leq \mathcal{E}^k(u_{\rho}, v_{\rho}).
\end{equation}
We may assume without loss of generality that $\lambda_\rho > 0$. Then, it is easily checked that $v_{\rho} \in W_0^{s,p}(B_{8\rho})$ and $v_{\rho} \equiv \lambda_{\rho}$ on $\overline{B_{2\rho}}$. Thus, $v_{\rho}/\lambda_{\rho}$ is admissible for $(s, p)$-capacity of $\overline{B_{\rho}}$ with respect to $B_{8\rho}$. Therefore, the ellipticity condition \eqref{eq:ellipticity} yields
\begin{align} \label{comp2}
\mathrm{cap}_{s,p}(\overline{B_{\rho}}, B_{8\rho}) \leq \mathcal{E}^{s,p}(v_{\rho}/\lambda_{\rho}, v_{\rho}/\lambda_{\rho}) =\lambda_{\rho}^{-p} \mathcal{E}^{s,p}(v_{\rho}, v_{\rho}) \leq \frac{\Lambda}{\lambda_{\rho}^p} \mathcal{E}^k(v_\rho, v_\rho).
\end{align}
By combining \eqref{comp1}, \eqref{eq:vv}, \eqref{comp2}, and \Cref{lem:cap}, we obtain
\begin{align*}
\rho^{n-sp} \leq C \lambda_{\rho}^{-(p-1)} \mu_{\rho}(\overline{B_{\rho}}).
\end{align*}
This inequality, together with \Cref{mecap}, proves \eqref{eq:inf-mu}.

We next show that 
\begin{align}\label{liminf}
\liminf_{\rho \to 0}\frac{\mathrm{cap}_{s,p}(D_\rho, B_{2\rho})}{\rho^{n-sp}}=0.
\end{align}
Assume to the contrary that there exists a constant $\alpha > 0$ such that
\begin{equation*}
\liminf_{\rho \to 0}\frac{\mathrm{cap}_{s,p}(D_\rho, B_{2\rho})}{\rho^{n-sp}} \geq 2\alpha.
\end{equation*}
Then, there exists a constant $\rho_0>0$ such that 
\begin{align*}
\frac{\mathrm{cap}_{s,p}(D_{\rho}, B_{2\rho})}{\rho^{n-sp}} \geq \alpha \quad \textrm{for all $\rho \in (0, \rho_0)$.}
\end{align*}
Thus, we have
\begin{align*}
\left( \frac{\mathrm{cap}_{s,p}(D_\rho, B_{2\rho})}{\rho^{n-sp}} \right)^{1/(p-1)} \frac{1}{\rho} \geq \frac{\alpha^{1/(p-1)}}{\rho} \quad\text{for all } \rho \in (0, \rho_0),
\end{align*}
which contradicts to \eqref{wieint}.

It only remains to estimate the tail term in \eqref{pt}. Since $u_\rho \in W^{s, p}_0(B_{8\rho})$, we have
\begin{equation*}
\mathrm{Tail}(u_{\rho}; x_0, 2\rho) = \left( (2\rho)^{sp} \int_{B_{8\rho} \setminus B_{2\rho}} \frac{u_\rho^{p-1}(y)}{|y-x_0|^{n+sp}} \,\mathrm{d}y \right)^{\frac{1}{p-1}}.
\end{equation*}
We observe that, by minimizing property of the $\mathcal{L}$-potential $u_\rho$ (\Cref{lem:potential} (iii)) and the ellipticity assumption \eqref{eq:ellipticity}, we have
\begin{equation*}
\mathcal{E}^k(u_\rho) \leq \mathcal{E}^k(v) \leq \Lambda \mathcal{E}^{s, p}(v)
\end{equation*}
for all functions $v \in W_0(D_\rho, B_{8\rho})$ admissible for $\mathrm{cap}_{s, p}(D_\rho, B_{8\rho})$. Taking the infimum over $v \in W_0(D_\rho, B_{8\rho})$, we obatin
\begin{equation*}
\Lambda \, \mathrm{cap}_{s, p}(D_\rho, B_{8\rho}) \geq \int_{B_{10\rho} \setminus B_{8\rho}} \int_{B_{8\rho} \setminus B_{2\rho}} |u_{\rho}(x)-u_{\rho}(y)|^p k(x, y) \,\mathrm{d}y \,\mathrm{d}x.
\end{equation*}
For $x \in B_{10\rho} \setminus B_{8\rho}$ and $y \in B_{8\rho} \setminus B_{2\rho}$, it holds that
\begin{equation*}
|x-y| \leq 18\rho \leq 9|y-x_0|.
\end{equation*}
Thus, we obtain
\begin{equation*}
\begin{split}
\Lambda \mathrm{cap}_{s, p}(D_\rho, B_{8\rho}) 
&\geq \frac{1}{\Lambda 9^{n+sp}} \int_{B_{10\rho} \setminus B_{8\rho}} \int_{B_{8\rho} \setminus B_{2\rho}} \frac{u_{\rho}^p(y)}{|y-x_0|^{n+sp}} \,\mathrm{d}y \,\mathrm{d}x \\
&\geq C \rho^{n} \int_{B_{8\rho} \setminus B_{2\rho}} \frac{u_{\rho}^p(y)}{|y-x_0|^{n+sp}} \,\mathrm{d}y.
\end{split}
\end{equation*}
By applying Young's inequality, for any $\varepsilon > 0$ we have
\begin{equation*}
\begin{split}
\int_{B_{8\rho} \setminus B_{2\rho}} \frac{u_{\rho}^{p-1}(y)}{|y-x_0|^{n+sp}} \,\mathrm{d}y
&\leq \frac{p-1}{p} \varepsilon^{-\frac{p}{p-1}} \int_{B_{8\rho} \setminus B_{2\rho}} \frac{u_{\rho}^p(y)}{|y-x_0|^{n+sp}} \,\mathrm{d}y + \frac{1}{p} \varepsilon^p \int_{B_{8\rho} \setminus B_{2\rho}} \frac{\mathrm{d}y}{|y-x_0|^{n+sp}} \\
&\leq C \varepsilon^{-\frac{p}{p-1}} \rho^{-n} \mathrm{cap}_{s, p}(D_\rho, B_{2\rho}) + C \varepsilon^p \rho^{-sp}.
\end{split}
\end{equation*}
Therefore, we estimate
\begin{equation*}
\mathrm{Tail}(u_\rho; x_0, 2\rho) \leq C \left( \varepsilon^{-\frac{p}{p-1}} \frac{\mathrm{cap}_{s, p}(D_\rho, B_{2\rho})}{\rho^{n-sp}} + \varepsilon^p \right)^{\frac{1}{p-1}}.
\end{equation*}
Taking $\varepsilon$ sufficiently small and then using \eqref{liminf}, we can make $\mathrm{Tail}(u_\rho; x_0, 2\rho)$ as small as we want.

There exists a small $\rho > 0$ such that
\begin{align*}
u(x_0)=u_{\rho}(x_0)<1,
\end{align*}
as required. We conclude that $x_0$ is an irregular boundary point by \Cref{irr}.
\end{proof}

\begin{appendix}


\section{Algebraic inequalities} \label{sec:inequalities}


In this section, we provide some algebraic inequalities that are used in the Caccioppoli-type estimates. Let $p \in (1, \infty)$ and $\beta, \gamma \in \mathbb{R}$ be such that $\beta = \gamma-(p-1)$. Recall that, in the local case, we use the following inequalities for the proof of Caccioppoli-type estimates \cite[Theorem 8.25 and 8.26]{GT01}: assume $\beta \neq 0$, $\gamma \neq 0$, and let $l \geq 0$. Then, there exists constants $c, C > 0$, depending only on $p$, such that
\begin{equation*}
\begin{split}
|\nabla u|^{p-2} \nabla u \cdot \nabla (f(u) \eta^p)
&\geq |\nabla F(u)|^p \eta^p - \frac{|\gamma|}{|\beta|} |\nabla F(u)|^{p-1} \eta^{p-1} |F(u)| |\nabla \eta| \\
&\geq \frac{1}{p} \left( |\nabla F(u)|^p \eta^p - \left( \frac{|\gamma|}{|\beta|} \right)^p |F(u)|^{p} |\nabla \eta|^p \right) \\
&\geq c \, |\nabla (F(u) \eta)|^p - C \left( 1+\left( \frac{|\gamma|}{|\beta|} \right)^p \right) |F(u)|^p |\nabla \eta|^p,
\end{split}
\end{equation*}
where $f, F: I \to [0, \infty)$ are defined by
\begin{equation} \label{eq:f}
f(t) = \frac{1}{\beta} \left( (t+d)^{\beta} - (l+d)^{\beta} \right) \quad\text{and}\quad F(t) = \frac{p}{\gamma} (t+d)^{\gamma/p}.
\end{equation}
Here, $I = [l, \infty)$ when $\beta > 0$ and $I = [0, l]$ when $\beta < 0$. As a discrete version, we prove the following algebraic inequality.

\begin{lemma} \label{lem:alg-ineq1}
Assume that $\beta \neq 0$ and $\gamma \neq 0$. There exist $c, C > 0$, depending only on $p$, such that
\begin{equation*}
\begin{split}
&|a-b|^{p-2}(a-b) (f(a) \eta_1^p - f(b) \eta_2^p) \\
&\geq c \left| F(a) \eta_1 - F(b) \eta_2 \right|^p - C \left( 1+ \left( \frac{|\gamma|}{|\beta|} \right)^p \right) \max \lbrace |F(a)|, |F(b)| \rbrace^p |\eta_1-\eta_2|^p
\end{split}
\end{equation*}
for any $a, b \in I$ and $\eta_1, \eta_2 \geq 0$.
\end{lemma}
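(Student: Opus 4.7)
By the symmetry of the claimed inequality under simultaneous exchange $(a, \eta_1) \leftrightarrow (b, \eta_2)$, I may assume $a \geq b$. The starting point is the elementary calculus identity $f'(t) = (t+d)^{\beta-1}$ and $F'(t) = (t+d)^{(\beta-1)/p}$, which yields $(F'(t))^p = f'(t)$ on the interval $I$. In particular $f$ is monotone increasing on $I$ regardless of the sign of $\beta$, and Hölder's inequality applied to $F(a) - F(b) = \int_b^a F'(t)\,\mathrm{d}t$ gives the backbone comparison
\[
|F(a) - F(b)|^p \leq (a-b)^{p-1}\bigl(f(a) - f(b)\bigr).
\]

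I would then use the algebraic decomposition
\[
f(a)\eta_1^p - f(b)\eta_2^p = \bigl(f(a) - f(b)\bigr)\eta_2^p + f(a)\bigl(\eta_1^p - \eta_2^p\bigr)
\]
(together with its mirror image obtained by swapping roles when $\eta_2 \geq \eta_1$). After multiplication by $(a-b)^{p-1}$, the diagonal piece $(a-b)^{p-1}(f(a) - f(b))\eta_2^p$ is nonnegative and dominates $|F(a) - F(b)|^p \eta_2^p$ by the backbone comparison. Combined with the routine triangle estimate
\[
|F(a)\eta_1 - F(b)\eta_2|^p \leq 2^{p-1}\Bigl(|F(a) - F(b)|^p \min\{\eta_1,\eta_2\}^p + \max\{|F(a)|, |F(b)|\}^p|\eta_1 - \eta_2|^p\Bigr),
\]
this yields the main lower bound $c\,|F(a)\eta_1 - F(b)\eta_2|^p$ modulo a universal-constant multiple of the error term $\max\{|F(a)|, |F(b)|\}^p|\eta_1 - \eta_2|^p$.

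The cut-off remainder $(a-b)^{p-1} f(a)(\eta_1^p - \eta_2^p)$ has the favorable sign whenever $f(a)$ and $\eta_1 - \eta_2$ share sign (covering, in particular, the regime $\beta > 0$ with $\eta_1 \geq \eta_2$). In the opposite configuration I would bound the absolute value via $|\eta_1^p - \eta_2^p| \leq p\max\{\eta_1, \eta_2\}^{p-1}|\eta_1 - \eta_2|$ and then apply Young's inequality with a small parameter $\varepsilon > 0$, splitting so that the $\varepsilon$-fraction is subsumed into the diagonal piece $(a-b)^{p-1}(f(a) - f(b))\max\{\eta_1,\eta_2\}^p$ generated in the previous step, while the residual $|\eta_1 - \eta_2|^p$ term carries a coefficient controlled by $C_\varepsilon (|\gamma|/|\beta|)^p \max\{|F(a)|, |F(b)|\}^p$. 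The factor $(|\gamma|/|\beta|)^p$ is extracted from the closed-form estimates $|f(t)| \leq |(t+d)^{\beta} - (l+d)^{\beta}|/|\beta|$ and $|F(t)| = (p/|\gamma|)(t+d)^{\gamma/p}$ together with the identity $\gamma = \beta + p-1$. The main obstacle is this final Young balancing: one must choose exponents so that the residual carries exactly the power $p$ on $|\gamma|/|\beta|$, rather than a different combination of $\beta$ and $\gamma$ weights, while simultaneously preserving enough absorption room against the diagonal piece. The restriction $a, b \in I$ and the anchoring of $f$ at the reference level $l$ are essential in this step, since without them $|f|$ and $|F|$ cannot be compared with the required uniformity.
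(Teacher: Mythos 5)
Your overall architecture matches the paper's: the Jensen-type comparison $|F(a)-F(b)|^p \le (a-b)^{p-1}(f(a)-f(b))$, the two-term decomposition of $f(a)\eta_1^p - f(b)\eta_2^p$, and a Young absorption for the cut-off remainder. But the two places where the proof actually has to work are left unresolved. First, the Young balancing that you yourself flag as ``the main obstacle'' is the crux, and it needs two ingredients you never state: the reverse comparison $(a-b)^{p-1}\min\lbrace F'(a),F'(b)\rbrace^{p-1} \le |F(a)-F(b)|^{p-1}$ (the second half of \Cref{lem:alg-ineq2}, without which the factor $(a-b)^{p-1}$ in the remainder cannot be traded for $|F(a)-F(b)|^{p-1}$ and there is nothing to absorb into the diagonal piece), and the pointwise bound $|f(t)|/F'(t)^{p-1} \le \tfrac{|\gamma|}{p|\beta|}|F(t)|$, which only helps if the argument of $f$ appearing in the remainder coincides with the point of $\lbrace a,b\rbrace$ where $F'$ is minimal. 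Since $F'(t)=(t+d)^{(\beta-1)/p}$ and $a>b$, that minimizing point is $b$ when $\beta\ge 1$ and $a$ when $\beta\le 1$; this is why the paper selects between the two decompositions according to the sign of $\beta-1$, whereas you select according to which $\eta_i$ is larger. Your selection can in fact be salvaged (the only mismatched unfavorable case is $0<\beta<1$ with $\eta_2>\eta_1$, where one additionally uses $(b+d)^\beta\le(a+d)^\beta$), but as written the proposal supplies no mechanism for the trade, and the power count you worry about is resolved precisely by the exact cancellation $\beta-(\gamma-p)(p-1)/p=\gamma/p$.

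Second, the absorption target is misidentified: the $\varepsilon$-fraction produced by Young carries $|F(a)-F(b)|^p\max\lbrace\eta_1,\eta_2\rbrace^p$, but the diagonal piece available in either decomposition only dominates $|F(a)-F(b)|^p\min\lbrace\eta_1,\eta_2\rbrace^p$; there is no ``$\max$'' diagonal piece generated in the previous step. The paper closes this by the elementary splitting $A\eta_1-B\eta_2=(A-B)\eta_i+(\cdot)(\eta_1-\eta_2)$ (\Cref{lem:alg-ineq3}), which converts $|F(a)-F(b)|^p\max\lbrace\eta_1,\eta_2\rbrace^p$ into $2^{p-1}|F(a)\eta_1-F(b)\eta_2|^p$ plus another admissible $\max\lbrace|F(a)|,|F(b)|\rbrace^p|\eta_1-\eta_2|^p$ error, and then fixes the Young constant small enough (namely $2^{-(2p-1)}$) that the net coefficient of $|F(a)\eta_1-F(b)\eta_2|^p$ stays positive. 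Without this final conversion the argument does not close.
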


Some inequalities similar to \Cref{lem:alg-ineq1} are known for almost all values of $\beta$. See, for instance, \cite{BP16} for $\beta \geq 1$ and \cite{CK22} for $\beta < -(p-1)$. \Cref{lem:alg-ineq1} can be proved in a similar way, but let us provide a proof to make the paper self-contained. To this end, we first prove the following two lemmas as intermediate steps.

\begin{lemma} \label{lem:alg-ineq2}
If $\beta \neq 0$ and $\gamma \neq 0$, then
\begin{equation*}
|a-b|^{p-2}(a-b)(f(a)-f(b)) \geq |F(a)-F(b)|^p
\end{equation*}
and
\begin{equation*}
|a-b|^{p-1} \min \left\lbrace F'(a), F'(b) \right\rbrace^{p-1} \leq |F(a)-F(b)|^{p-1}
\end{equation*}
for any $a, b \in I$.
\end{lemma}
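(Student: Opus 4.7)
The plan is to reduce both inequalities to one-dimensional calculus by expressing $f$ and $F$ through their derivatives and then applying H\"older's inequality together with a monotonicity observation. The key computation is that $f'(t) = (t+d)^{\beta-1}$ and $F'(t) = (t+d)^{\gamma/p - 1}$, and the relation $\gamma = \beta + (p-1)$ gives the identity $f'(t) = F'(t)^p$. Since $F'(t) > 0$, the function $F$ is strictly monotone on $I$, and a short case analysis on the sign of $\beta$ shows that $f$ is monotone in the same direction (when $\beta < 0$, division by $\beta$ cancels the flip coming from $(t+d)^\beta$).

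For the first inequality, I would assume without loss of generality that $a > b$ (the case $a < b$ is symmetric, and $a = b$ is trivial). By monotonicity of $f$, the left-hand side equals $(a-b)^{p-1}(f(a)-f(b))$. Writing
\[
f(a)-f(b) = \int_b^a F'(t)^p\,dt, \qquad F(a)-F(b) = \int_b^a F'(t)\,dt,
\]
H\"older's inequality gives
\[
|F(a)-F(b)| \leq (a-b)^{(p-1)/p}\left(\int_b^a F'(t)^p\,dt\right)^{1/p},
\]
and raising to the $p$-th power yields $|F(a)-F(b)|^p \leq (a-b)^{p-1}(f(a)-f(b))$, which is the required bound.

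For the second inequality, I would use that $F'(t) = (t+d)^{\gamma/p-1}$ is itself monotone on $I$ (increasing if $\gamma/p > 1$, decreasing if $\gamma/p < 1$, constant otherwise). Consequently, for every $t$ between $a$ and $b$ we have $F'(t) \geq \min\{F'(a), F'(b)\}$. Assuming $a > b$, integration gives
\[
F(a)-F(b) = \int_b^a F'(t)\,dt \geq (a-b)\min\{F'(a), F'(b)\},
\]
and raising to the $(p-1)$-th power produces the claimed inequality, with the case $a < b$ handled symmetrically.

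Both parts are elementary; I do not anticipate a substantial obstacle, since the identity $f' = (F')^p$ is precisely what makes H\"older directly applicable, and the monotonicity of $F'$ is automatic from its power-function form. The only care needed is to verify that $f$ and $F$ are monotone in the same direction (hence $(a-b)(f(a)-f(b)) \geq 0$) across all four sign configurations of $\beta$ and $\gamma$, and that the domain $I$ is chosen so that $t+d > 0$ throughout, which is ensured by $d > 0$ and $t \geq 0$.
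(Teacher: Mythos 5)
Your proposal is correct and follows essentially the same route as the paper: the paper writes the first inequality as $|\fint_b^a F'|^p \le \fint_b^a f'$ via Jensen, which is exactly your H\"older step combined with the identity $f'=(F')^p$, and the second inequality is obtained in both cases by bounding $\fint_b^a F'$ below by $\min\{F'(a),F'(b)\}$ using the monotonicity of the power function $F'$. Your justification of the second step is, if anything, slightly more careful than the paper's.
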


\begin{proof}
We may assume that $a>b$. Then, we have
\begin{equation*}
\left| \frac{F(a) - F(b)}{a-b} \right|^p = \left| \fint_{b}^{a} F'(t) \,\mathrm{d}t \right|^p \leq \fint_{b}^{a} f'(t) \,\mathrm{d}t = \frac{f(a) - f(b)}{a-b}
\end{equation*}
by Jensen's inequality, and
\begin{equation*}
\left| \frac{F(a) - F(b)}{a-b} \right|^{p-1} = \left| \fint_{b}^{a} F'(t) \,\mathrm{d}t \right|^{p-1} \geq \min \lbrace F'(a), F'(b) \rbrace^{p-1}
\end{equation*}
since $F'$ is positive on $[0, \infty)$.
\end{proof}

\begin{lemma} \label{lem:alg-ineq3}
Let $A, B \in \mathbb{R}$ and $\eta_1, \eta_2 \geq 0$, then
\begin{equation*}
\begin{split}
|A-B|^p \min\lbrace \eta_1, \eta_2 \rbrace^p
&\geq 2^{1-p} |A \eta_1 - B \eta_2|^p - \max\lbrace |A|, |B| \rbrace^p |\eta_1-\eta_2|^p \quad\text{and} \\
|A-B|^p \max\lbrace \eta_1, \eta_2 \rbrace^p
&\leq 2^{p-1} |A \eta_1 - B \eta_2|^p + 2^{p-1} \max\lbrace |A|, |B| \rbrace^p |\eta_1-\eta_2|^p.
\end{split}
\end{equation*}
\end{lemma}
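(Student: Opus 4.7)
The plan is to reduce both inequalities to the single algebraic identity
\begin{equation*}
A\eta_1 - B\eta_2 = (A-B)\eta_1 - B(\eta_2-\eta_1) = (A-B)\eta_2 + A(\eta_1-\eta_2),
\end{equation*}
combined with the elementary convexity bound $(x+y)^p \leq 2^{p-1}(x^p+y^p)$ for $x,y \geq 0$. By symmetry in the roles of $(A,\eta_1)$ and $(B,\eta_2)$, I may treat each inequality under a single ordering of $\eta_1,\eta_2$.

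For the first inequality, I will assume without loss of generality that $\eta_1 \leq \eta_2$, so $\min\{\eta_1,\eta_2\}=\eta_1$. The triangle inequality applied to the first form of the identity above gives $|A\eta_1 - B\eta_2| \leq |A-B|\eta_1 + |B|(\eta_2-\eta_1)$. Raising to the $p$-th power and applying the convexity bound yields
\begin{equation*}
|A\eta_1 - B\eta_2|^p \leq 2^{p-1}\bigl(|A-B|^p \eta_1^p + |B|^p(\eta_2-\eta_1)^p\bigr),
\end{equation*}
and rearranging after bounding $|B| \leq \max\{|A|,|B|\}$ produces the desired estimate.

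For the second inequality, I will assume $\eta_1 \geq \eta_2$, so $\max\{\eta_1,\eta_2\}=\eta_1$. Rewriting $(A-B)\eta_1 = (A\eta_1 - B\eta_2) - B(\eta_1-\eta_2)$ and again invoking the triangle inequality together with $(x+y)^p \leq 2^{p-1}(x^p+y^p)$ gives
\begin{equation*}
|A-B|^p \eta_1^p \leq 2^{p-1}\bigl(|A\eta_1 - B\eta_2|^p + |B|^p(\eta_1-\eta_2)^p\bigr),
\end{equation*}
and the replacement $|B| \leq \max\{|A|,|B|\}$ closes the proof.

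There is no serious obstacle here; the only subtlety is choosing which of the two equivalent decompositions of $A\eta_1 - B\eta_2$ to use in each case so that the surviving error term is governed by $|\eta_1-\eta_2|$ rather than by $\max\{\eta_1,\eta_2\}$. Both decompositions are available by symmetry, so the argument is completely elementary.
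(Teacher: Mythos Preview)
Your argument is correct and is essentially the same as the paper's: the paper also reduces both inequalities to the identity $A\eta_1 - B\eta_2 = (A-B)\eta_2 + A(\eta_1-\eta_2) = (A-B)\eta_1 + B(\eta_1-\eta_2)$ together with the triangle inequality (and implicitly the same convexity bound $(x+y)^p \leq 2^{p-1}(x^p+y^p)$), after assuming without loss of generality an ordering on $\eta_1,\eta_2$.
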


\begin{proof}
We may assume that $\eta_1 \geq \eta_2$. Then, the desired inequalities follow from the equalities
\begin{equation*}
A\eta_1 - B\eta_2 = (A-B) \eta_2 + A (\eta_1-\eta_2) = (A-B) \eta_1 + B (\eta_1-\eta_2)
\end{equation*}
and the triangle inequality.
\end{proof}

We prove \Cref{lem:alg-ineq1} by using \Cref{lem:alg-ineq2} and \Cref{lem:alg-ineq3}.

\begin{proof} [Proof of \Cref{lem:alg-ineq1}]
We may assume that $a > b$. Then, we have
\begin{align}
(a-b)^{p-1} (f(a) \eta_1^p - f(b) \eta_2^p)
&= (a-b)^{p-1}(f(a) - f(b))\eta_1^p + (a-b)^{p-1} f(b)(\eta_1^p-\eta_2^p) \label{eq:min} \\ 
&= (a-b)^{p-1}(f(a) - f(b))\eta_2^p + (a-b)^{p-1} f(a)(\eta_1^p-\eta_2^p).\label{eq:max}
\end{align}
We apply \Cref{lem:alg-ineq2} to \eqref{eq:min} if $\beta \geq 1$ or to \eqref{eq:max} if $\beta \leq 1$. Then, we obtain
\begin{equation*}
\begin{split}
J
:=&~ (a-b)^{p-1} (f(a) \eta_1^p - f(b) \eta_2^p) \\
\geq&~ |F(a)-F(b)|^p \min\lbrace \eta_1, \eta_2 \rbrace^p - |F(a)-F(b)|^{p-1} \max\left\lbrace \frac{|f(a)|}{F'(a)^{p-1}}, \frac{|f(b)|}{F'(b)^{p-1}} \right\rbrace |\eta_1^p-\eta_2^p|.
\end{split}
\end{equation*}
By using
\begin{equation*}
\frac{|f(t)|}{F'(t)^{p-1}} \leq \frac{|\gamma|}{p|\beta|} |F(t)|, \quad |\eta_1^p-\eta_2^p| \leq p |\eta_1-\eta_2|\max\lbrace \eta_1, \eta_2 \rbrace^{p-1},
\end{equation*}
and Young's inequality, we deduce
\begin{equation*}
\begin{split}
J
&\geq |F(a)-F(b)|^p \min\lbrace \eta_1, \eta_2 \rbrace^p \\
&\quad - \frac{1}{2^{2p-1}} |F(a)-F(b)|^p \max\lbrace\eta_1, \eta_2 \rbrace^p - C \left( \frac{|\gamma|}{|\beta|} \right)^p \max\lbrace |F(a)|, |F(b)| \rbrace^p |\eta_1-\eta_2|^p,
\end{split}
\end{equation*}
where $C = C(p) > 0$. Applying \Cref{lem:alg-ineq3} with $A = F(a)$ and $B = F(b)$ finishes the proof.
\end{proof}

The case $\gamma = 0$, or equivalently $\beta = -(p-1)$, is treated in the following lemma, which is a discrete version of
\begin{equation*}
|\nabla u|^{p-2} \nabla u \cdot \nabla (f(u) \eta^p) \geq c |\nabla \log (u+d)|^p \eta^p - C |\nabla \eta|^p,
\end{equation*}
where $f$ is given by \eqref{eq:f} with $\beta = -(p-1)$ and $c, C > 0$ are constants depending only on $p$.

\begin{lemma} \label{lem:alg-ineq-log}
Assume that $\gamma = \beta + p-1 = 0$. There exist $c, C > 0$, depending only on $p$, such that
\begin{equation*}
|a-b|^{p-2}(a-b) (f(a)\eta_1^p - f(b)\eta_2^p) \geq c\, |\log (a+d) - \log(b+d)|^p \min\lbrace \eta_1, \eta_2 \rbrace^p - C |\eta_1-\eta_2|^p
\end{equation*}
for any $a, b \in I$ and $\eta_1, \eta_2 \geq 0$.
\end{lemma}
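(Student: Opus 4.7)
The plan is to mimic the argument for \Cref{lem:alg-ineq1}, taking $F(t)=\log(t+d)$ in place of the power function used there. Since $f'(t)=(t+d)^{-p}=F'(t)^p$ and $F'>0$, the Jensen-based proof of \Cref{lem:alg-ineq2} applies verbatim and yields
\[
(a-b)^{p-1}\bigl(f(a)-f(b)\bigr) \geq |F(a)-F(b)|^p \quad\text{and}\quad (a-b)\,F'(a) \leq F(a)-F(b)
\]
for $a\geq b$ (the latter using that $F'$ is monotone decreasing on $I=[0,l]$). Assuming $a\geq b$ without loss of generality and using the decomposition \eqref{eq:max},
\[
J := (a-b)^{p-1}\bigl(f(a)\eta_1^p-f(b)\eta_2^p\bigr) = (a-b)^{p-1}\bigl(f(a)-f(b)\bigr)\eta_2^p + (a-b)^{p-1}f(a)(\eta_1^p-\eta_2^p),
\]
so the first (main) term is bounded below by $|F(a)-F(b)|^p\min\{\eta_1,\eta_2\}^p$.

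The key new ingredient, replacing the $|F|^p$-factor appearing in \Cref{lem:alg-ineq1}, is a pair of complementary upper bounds on $(a-b)^{p-1}|f(a)|$. Starting from
\[
(a-b)^{p-1}|f(a)| \leq \frac{1}{p-1}\Bigl(\frac{a-b}{a+d}\Bigr)^{p-1},
\]
one bound uses $(a-b)/(a+d)\leq F(a)-F(b)$ (the consequence of \Cref{lem:alg-ineq2} recalled above) to give $(a-b)^{p-1}|f(a)|\leq |F(a)-F(b)|^{p-1}/(p-1)$, while the other uses the trivial $(a-b)/(a+d)<1$ to give the uniform bound $(a-b)^{p-1}|f(a)|\leq 1/(p-1)$. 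The uniform bound, which has no analogue in \Cref{lem:alg-ineq1}, is what ultimately permits an error of the form $C|\eta_1-\eta_2|^p$, free of any $|F|$-factor.

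To estimate the difference term from below, I would combine $|\eta_1^p-\eta_2^p|\leq p|\eta_1-\eta_2|\max\{\eta_1,\eta_2\}^{p-1}$ with the elementary $\max\{\eta_1,\eta_2\}^p \leq 2^{p-1}\bigl(\min\{\eta_1,\eta_2\}^p+|\eta_1-\eta_2|^p\bigr)$ and Young's inequality, splitting into two regimes. When $|F(a)-F(b)|\leq 1$, use the sharp bound $|F(a)-F(b)|^{p-1}/(p-1)$, and absorb the resulting $|F(a)-F(b)|^p|\eta_1-\eta_2|^p$ into $C|\eta_1-\eta_2|^p$ using $|F(a)-F(b)|\leq 1$. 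When $|F(a)-F(b)|>1$, use the uniform bound $1/(p-1)$, and absorb a spurious $\min\{\eta_1,\eta_2\}^p$ into $|F(a)-F(b)|^p\min\{\eta_1,\eta_2\}^p$ using $|F(a)-F(b)|>1$. Choosing the Young parameter small enough (depending only on $p$), both regimes produce
\[
J \geq \tfrac12\,|F(a)-F(b)|^p\min\{\eta_1,\eta_2\}^p - C\,|\eta_1-\eta_2|^p.
\]
The main technical obstacle is engineering an error independent of $|F|$: this is precisely what forces the case split on the magnitude of $|F(a)-F(b)|$ and the combined use of the sharp and the seemingly innocuous uniform bounds on $(a-b)^{p-1}|f(a)|$.
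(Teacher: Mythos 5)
Your proposal is correct. The skeleton coincides with the paper's: both start from the decomposition \eqref{eq:max}, both use the Jensen inequality $(a-b)^{p-1}(f(a)-f(b)) \geq |F(a)-F(b)|^p$ with $F(t)=\log(t+d)$ for the main term, and both rely on $\tfrac{a-b}{a+d} \leq \log(a+d)-\log(b+d)$ together with $|f(a)| \leq \tfrac{1}{p-1}(a+d)^{1-p}$ for the cross term. Where you diverge is in how the cross term $(a-b)^{p-1}f(a)(\eta_1^p-\eta_2^p)$ is absorbed. The paper invokes the inequality $\eta_1^p-\eta_2^p \leq C\varepsilon\,\eta_2^p + C\varepsilon^{1-p}|\eta_1-\eta_2|^p$ from \cite[Lemma 3.1]{DCKP16} with the \emph{state-dependent} parameter $\varepsilon = \delta\,\tfrac{a-b}{a+d}$; the prefactor $(a-b)^{p-1}(a+d)^{1-p}$ then cancels exactly against $\varepsilon^{1-p}$, producing $\delta\bigl(\tfrac{a-b}{a+d}\bigr)^p\eta_2^p + \delta^{1-p}|\eta_1-\eta_2|^p$ in one stroke, with the first piece absorbed into the main term via $\bigl(\tfrac{a-b}{a+d}\bigr)^p \leq |F(a)-F(b)|^p$. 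You instead keep a fixed Young parameter and split on $|F(a)-F(b)|\lessgtr 1$, playing the sharp bound $(a-b)^{p-1}|f(a)| \leq \tfrac{1}{p-1}|F(a)-F(b)|^{p-1}$ against the uniform bound $\tfrac{1}{p-1}$. The two devices achieve the same interpolation; the paper's choice of $\varepsilon$ does it without a case distinction, while yours stays closer to the template of \Cref{lem:alg-ineq1} and makes explicit why the error can be taken free of any $|F|$-factor. Both yield constants depending only on $p$, and your intermediate inequalities (monotonicity of $F'$, $\tfrac{a-b}{a+d}<1$ from $b\geq 0$, $d>0$) all check out.
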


\begin{proof}
We may assume that $a > b$. By applying \cite[Lemma 3.1]{DCKP16} to $\eta_1$ and $\eta_2$ with
\begin{equation*}
\varepsilon = \delta \frac{a-b}{a+d} \in (0,1), \quad \delta \in (0, 1),
\end{equation*}
we have
\begin{equation*}
\eta_1^p - \eta_2^p \leq C \varepsilon \eta_2^p + C \varepsilon^{1-p} |\eta_1-\eta_2|^p,
\end{equation*}
where $C = C(p) > 0$. Thus, it follows from \eqref{eq:max} and $f(a) \geq \frac{1}{1-p}(a+d)^{1-p}$ that
\begin{equation*}
\begin{split}
K
:=&~ (a-b)^{p-1}(f(a)\eta_1^p - f(b)\eta_2^p) \\
\geq&~ (a-b)^{p-1} (f(a)-f(b)) \eta_2^p - C\delta \left( \frac{a-b}{a+d} \right)^p \eta_2^p - C \delta^{1-p} |\eta_1-\eta_2|^p.
\end{split}
\end{equation*}
By using the same arguments as in the proof of \Cref{lem:alg-ineq2}, we obtain
\begin{equation*}
\left( \frac{a-b}{a+d} \right)^{p} \leq |\log (a+d) - \log (b+d)|^p \leq (a-b)^{p-1}(f(a)-f(b)).
\end{equation*}
Therefore, we deduce that
\begin{equation*}
K \geq (1-C\delta) |\log (a+d) - \log (b+d)|^p \eta_2^p - C \delta^{1-p} |\eta_1-\eta_2|^p,
\end{equation*}
from which we conclude the lemma by taking $\delta$ sufficiently small.
\end{proof}

We also provide the following algebraic inequality, which is similar to \Cref{lem:alg-ineq1}.

\begin{lemma} \label{lem:alg-ineq-wolff}
Let $a, b \in \mathbb{R}$, $\eta_1, \eta_2 \geq 0$, and assume that $\gamma$ satisfies \eqref{eq:gamma}. Let $\tau = \gamma/(p-1)$ and $q = p\gamma/(p-\gamma/(p-1))$, then
\begin{equation*}
\begin{split}
&|a-b|^{p-2}(a-b)(g(a_+) \eta_1^p - g(b_+) \eta_2^p) \\
&\geq c\, |G(a_+) \eta_1 - G(b_+)\eta_2|^p - C \max\lbrace (1+a_+)^{\gamma}, (1+b_+)^{\gamma} \rbrace {\bf 1}_{\lbrace a>0 \rbrace} {\bf 1}_{\lbrace b>0 \rbrace} |\eta_1 - \eta_2|^p,
\end{split}
\end{equation*}
where
\begin{equation*}
g(t) = \frac{1}{\tau-1}(1-(1+t)^{1-\tau}), \quad G(t) = \frac{q}{\gamma}((1+t)^{\gamma/q}-1),
\end{equation*}
and $c, C > 0$ are constants depending only on $p$.
\end{lemma}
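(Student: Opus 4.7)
The plan is to follow the structure of the proof of \Cref{lem:alg-ineq1}, with the functions $g,G$ in the roles of $f,F$. By the symmetry of both sides of the claimed inequality under the exchange $(a,\eta_{1})\leftrightarrow(b,\eta_{2})$, I may assume $a\geq b$. The cases $a\leq 0$ (both sides vanish) and $a>0\geq b$ (which reduces to showing $(a-b)^{p-1}g(a)\geq cG(a)^{p}$, an immediate consequence of the Jensen-type inequality below applied at the endpoint $b=0$ together with $a-b\geq a$, while the indicator kills the error term) are trivial. Hence the real work is to prove the inequality for $a\geq b\geq 0$. A direct computation gives $g'(t)=(1+t)^{-\tau}$ and $G'(t)=(1+t)^{-\tau/p}$, so the crucial identity $g'=(G')^{p}$ holds; Jensen's inequality applied to the convex function $x\mapsto x^{p}$ on the uniform measure $\frac{1}{a-b}\,\mathrm{d}t$ on $[b,a]$ then yields the analogue of \Cref{lem:alg-ineq2}:
\[
(a-b)^{p-1}\bigl(g(a)-g(b)\bigr)\geq\bigl(G(a)-G(b)\bigr)^{p}.
\]

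Next, following the proof of \Cref{lem:alg-ineq1}, I decompose
\[
(a-b)^{p-1}\bigl(g(a)\eta_{1}^{p}-g(b)\eta_{2}^{p}\bigr)=(a-b)^{p-1}(g(a)-g(b))\eta_{i}^{p}+(a-b)^{p-1}g(\ast)(\eta_{1}^{p}-\eta_{2}^{p}),
\]
choosing $(i,\ast)=(2,a)$ when $\eta_{1}\geq\eta_{2}$ so that the cross term is nonnegative (and the Jensen bound alone gives $\geq(G(a)-G(b))^{p}\min\lbrace\eta_{1},\eta_{2}\rbrace^{p}$), and $(i,\ast)=(1,b)$ when $\eta_{1}<\eta_{2}$. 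In the latter case the cross term must be absorbed by Young's inequality with exponents $p/(p-1)$ and $p$: using $|\eta_{1}^{p}-\eta_{2}^{p}|\leq p|\eta_{1}-\eta_{2}|\max\lbrace\eta_{1},\eta_{2}\rbrace^{p-1}$ and factoring out $(G(a)-G(b))^{p-1}\max\lbrace\eta_{1},\eta_{2}\rbrace^{p-1}$,
\[
(a-b)^{p-1}g(b)|\eta_{1}^{p}-\eta_{2}^{p}|\leq\epsilon\bigl(G(a)-G(b)\bigr)^{p}\max\lbrace\eta_{1},\eta_{2}\rbrace^{p}+C_{\epsilon}\!\left[\frac{(a-b)^{p-1}g(b)}{(G(a)-G(b))^{p-1}}\right]^{p}\!|\eta_{1}-\eta_{2}|^{p}.
\]
From $(G(a)-G(b))^{p-1}\geq(a-b)^{p-1}(1+a)^{-\tau(p-1)/p}$ and $g(b)\leq 1/(\tau-1)$ I obtain that the bracketed factor is $\leq C(1+a)^{\tau(p-1)/p}$, whose $p$th power equals $C(1+a)^{\gamma}$ by the identity $\tau(p-1)=\gamma$ built into the definition of $\tau$. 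Converting $\max\lbrace\eta_{1},\eta_{2}\rbrace^{p}$ back to $\min\lbrace\eta_{1},\eta_{2}\rbrace^{p}$ via $\eta_{2}^{p}\leq 2^{p-1}(\eta_{1}^{p}+|\eta_{1}-\eta_{2}|^{p})$ and choosing $\epsilon$ small yields, in both sub-cases,
\[
(a-b)^{p-1}\bigl(g(a)\eta_{1}^{p}-g(b)\eta_{2}^{p}\bigr)\geq c\bigl(G(a)-G(b)\bigr)^{p}\min\lbrace\eta_{1},\eta_{2}\rbrace^{p}-C(1+a)^{\gamma}|\eta_{1}-\eta_{2}|^{p}.
\]

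Finally, applying \Cref{lem:alg-ineq3} with $A=G(a)$ and $B=G(b)$ converts $(G(a)-G(b))^{p}\min\lbrace\eta_{1},\eta_{2}\rbrace^{p}$ into $2^{1-p}|G(a)\eta_{1}-G(b)\eta_{2}|^{p}$ minus an error $G(a)^{p}|\eta_{1}-\eta_{2}|^{p}$. Since $G(a)^{p}\leq C(1+a)^{p-\tau}\leq C(1+a)^{\gamma}$ (using the hypothesis $\gamma>p-1$), this residual error is absorbed into $C\max\lbrace(1+a_{+})^{\gamma},(1+b_{+})^{\gamma}\rbrace\mathbf{1}_{\lbrace a>0\rbrace}\mathbf{1}_{\lbrace b>0\rbrace}|\eta_{1}-\eta_{2}|^{p}$, concluding the proof. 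The delicate point --- and the main obstacle --- lies in the case $\eta_{1}<\eta_{2}$, where one must execute Young's inequality so that the good factor $(G(a)-G(b))^{p}$ remains paired with $\min\lbrace\eta_{1},\eta_{2}\rbrace^{p}$ (necessary for \Cref{lem:alg-ineq3} to produce the desired $|G(a)\eta_{1}-G(b)\eta_{2}|^{p}$ rather than useless $\max$ terms) while simultaneously ensuring the coefficient of $|\eta_{1}-\eta_{2}|^{p}$ comes out with exactly the exponent $(1+a)^{\gamma}$. This fine balance is forced by the twin identities $g'=(G')^{p}$ (which powers Jensen) and $\tau(p-1)=\gamma$ (which matches the Young exponents), both consequences of the specific definitions of $g$, $G$, and $q$.
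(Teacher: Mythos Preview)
Your proof is correct and follows essentially the same approach as the paper's: the Jensen step via $g'=(G')^{p}$, the bound on the cross term using the monotonicity of $G'$, Young's inequality, and then \Cref{lem:alg-ineq3} to convert $(G(a)-G(b))^{p}\min\{\eta_{1},\eta_{2}\}^{p}$ into $|G(a)\eta_{1}-G(b)\eta_{2}|^{p}$. The only cosmetic difference is that you split into the sub-cases $\eta_{1}\geq\eta_{2}$ and $\eta_{1}<\eta_{2}$ (dropping the cross term entirely in the first), whereas the paper uses a single decomposition with $\eta_{1}^{p}$ and then invokes both inequalities of \Cref{lem:alg-ineq3} to handle the $\min$ and $\max$ terms simultaneously; the underlying estimates are identical.
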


\begin{proof}
We may assume that $a>b$. The same arguments as in the proof of \Cref{lem:alg-ineq2} show that
\begin{equation} \label{eq:alg-ineq2}
(a_+-b_+)^{p-1} (g(a_+) - g(b_+)) \geq |G(a_+) - G(b_+)|^p
\end{equation}
and
\begin{equation*}
(a_+-b_+)^{p-1} g'(a_+)^{\frac{p-1}{p}} \leq |G(a_+)-G(b_+)|^{p-1}.
\end{equation*}
The case $b \leq 0$ follows from \eqref{eq:alg-ineq2}. Thus, we assume $b > 0$. Since $a-b \geq a_+ - b_+$, $(a-b)^{p-1} g(b_+) = (a_+ - b_+)^{p-1} g(b_+)$, and $g(b_+) \leq 1$, we obtain
\begin{equation*}
\begin{split}
L
:=&~ (a-b)^{p-1} (g(a_+) \eta_1^p - g(b_+) \eta_2^p) \\
=&~ (a-b)^{p-1} ((g(a_+) - g(b_+)) \eta_1^p + g(b_+) (\eta_1^p - \eta_2^p)) \\
\geq&~ (a_+-b_+)^{p-1} ((g(a_+) - g(b_+)) \eta_1^p + g(b_+) (\eta_1^p - \eta_2^p)) \\
\geq&~ |G(a_+) - G(b_+)|^p \eta_1^p - p(a_+-b_+)^{p-1} |\eta_1-\eta_2| \max\lbrace \eta_1, \eta_2 \rbrace^{p-1} \\
\geq&~ |G(a_+) - G(b_+)|^p \eta_1^p - p |G(a_+)-G(b_+)|^{p-1} \max\lbrace \eta_1, \eta_2 \rbrace^{p-1} g'(a_+)^{-\frac{p-1}{p}} |\eta_1-\eta_2|.
\end{split}
\end{equation*}
By using Young's inequality, \Cref{lem:alg-ineq3}, and $G(t)^p \leq C (1+t)^{\gamma}$, we conclude that
\begin{equation*}
\begin{split}
L
&\geq |G(a_+) - G(b_+)|^p \min\lbrace \eta_1, \eta_2 \rbrace^p \\
&\quad - \frac{1}{2^{2p-1}} |G(a_+)-G(b_+)|^{p} \max\lbrace \eta_1, \eta_2 \rbrace^{p} - C \frac{1}{g'(a_+)^{p-1}} |\eta_1-\eta_2|^p \\
&\geq \frac{1}{2^{p}} |G(a_+)\eta_1 - G(b_+)\eta_2|^p - C \max\lbrace (1+a_+)^{\gamma}, (1+b_+)^{\gamma} \rbrace |\eta_1 - \eta_2|^p,
\end{split}
\end{equation*}
where $C = C(p) > 0$.
\end{proof}

Let us finish the section with one more algebraic inequality.

\begin{lemma} \label{lem:alg-ineq-suff}
Let $a, b > 0$ and assume that $\gamma < p$, $\gamma \neq 0$. Then,
\begin{equation*}
\max\lbrace a, b \rbrace^{\gamma-p} |a-b|^p \leq \left( \frac{p}{|\gamma|} \right)^p |a^{\gamma/p} - b^{\gamma/p}|^p.
\end{equation*}
\end{lemma}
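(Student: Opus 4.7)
The plan is to reduce to the one-sided case by symmetry: the inequality is symmetric in $a$ and $b$, so we may assume $a \geq b > 0$, in which case $\max\{a,b\} = a$. Setting $\alpha = \gamma/p$, the inequality to prove becomes
\begin{equation*}
|\alpha|\, a^{\alpha-1}(a-b) \leq |a^{\alpha} - b^{\alpha}|,
\end{equation*}
after taking the $p$-th root and dividing through by $a^{(1-\alpha)}$.

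Next I would represent the right-hand side as
\begin{equation*}
a^{\alpha} - b^{\alpha} = \alpha \int_{b}^{a} t^{\alpha - 1}\,\mathrm{d}t.
\end{equation*}
The assumption $\gamma < p$ translates to $\alpha - 1 < 0$, so the function $t \mapsto t^{\alpha - 1}$ is strictly decreasing on $(0, \infty)$. Consequently, for every $t \in [b, a]$ one has $t^{\alpha - 1} \geq a^{\alpha - 1}$, and therefore
\begin{equation*}
|a^{\alpha} - b^{\alpha}| = |\alpha| \int_{b}^{a} t^{\alpha - 1}\,\mathrm{d}t \geq |\alpha|\, a^{\alpha - 1}(a - b).
\end{equation*}

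Raising this estimate to the $p$-th power and restoring $\gamma = p\alpha$ gives
\begin{equation*}
|a^{\gamma/p} - b^{\gamma/p}|^{p} \geq \left(\frac{|\gamma|}{p}\right)^{p} a^{\gamma - p}(a-b)^{p} = \left(\frac{|\gamma|}{p}\right)^{p} \max\{a,b\}^{\gamma-p}|a-b|^{p},
\end{equation*}
which is the desired inequality. There is no real obstacle: both the case $0<\gamma<p$ (so $\alpha>0$) and the case $\gamma<0$ (so $\alpha<0$) are handled uniformly by the absolute-value formulation, since what matters for the monotonicity of $t^{\alpha-1}$ is only that $\alpha-1<0$, which is guaranteed by $\gamma<p$.
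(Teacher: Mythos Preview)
Your proof is correct and follows essentially the same approach as the paper: assume $a>b$ by symmetry, write $a^{\gamma/p}-b^{\gamma/p}=\int_b^a \tfrac{\gamma}{p}t^{\gamma/p-1}\,\mathrm{d}t$, and use the monotonicity of $t\mapsto t^{\gamma/p-1}$ (which is decreasing since $\gamma/p-1<0$) to bound the integral from below by $\tfrac{|\gamma|}{p}a^{\gamma/p-1}(a-b)$. The only quibble is the phrase ``dividing through by $a^{(1-\alpha)}$,'' which is not actually needed---the factor $a^{\alpha-1}$ appears directly after the $p$-th root---but this does not affect the argument.
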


\begin{proof}
We may assume without loss of generality that $a > b$. Then,
\begin{equation*}
|a^{\gamma/p}-b^{\gamma/p}|^p = \left| \int_b^a \frac{\gamma}{p} t^{\gamma/p-1} \,\mathrm{d}t \right|^p \geq \left|\frac{\gamma}{p} \right|^p a^{\gamma-p} (a-b)^p,
\end{equation*}
where we use the monotonicity of $t \mapsto t^{\gamma/p-1}$.
\end{proof}

\end{appendix}


\end{document}